\definecolor{mygray}{rgb}{0.816, 0.808, 0.808}
\providecommand{\algorithmname}{Algorithm}
\theoremstyle{plain}
\theoremstyle{remark}
\newtheorem{remark}{\protect\remarkname}
\theoremstyle{plain}
\theoremstyle{plain}
\newtheorem{theorem}{\protect\theoremname}
\theoremstyle{plain}
\theoremstyle{definition}
\theoremstyle{plain}
\newtheorem{lemma}{\protect\lemmaname}
\definecolor{blue}{rgb}{0, 0, 255}
\patchcmd{\algorithmic}{\addtolength{\ALC@tlm}{\leftmargin} }{\addtolength{\ALC@tlm}{\leftmargin}}{}{}
\providecommand{\assumptionname}{Assumption}
\providecommand{\corollaryname}{Corollary}
\providecommand{\definitionname}{Definition}
\providecommand{\lemmaname}{Lemma}
\providecommand{\propositionname}{Proposition}
\providecommand{\remarkname}{Remark}
\providecommand{\theoremname}{Theorem}
\newlist{thmlist}{enumerate}{1}
\setlist[thmlist]{label=(\roman{thmlisti}),
                  ref=\thethm.(\roman{thmlisti}),
                  noitemsep}
\newcommand{\dist}{\operatorname{Dist}}
\newcommand{\rank}{\operatorname{rank}}
\newcommand{\ignore}[1]{}
\newcommand{\Var}{{\bf Var}}
\newcommand{\Cov}{{\bf Cov}}
\newcommand{\eps}{\varepsilon}
\newcommand{\wh}{\widehat}
\renewcommand{\i}{{\boldsymbol i}}
\title{A Practical GPU-Enhanced Matrix-Free Primal-Dual Method for Large-Scale Conic Programs}
\author{Zhenwei Lin\thanks{School of Industrial Engineering, Purdue University, West Lafayette, IN 47907, USA. \href{mailto:lin2193@purdue.edu}{lin2193@purdue.edu}
}  
\quad\quad Zikai Xiong\thanks{H. Milton Stewart School of Industrial and Systems Engineering, Georgia Institute of Technology, Atlanta, GA 30332, USA. \href{mailto:zxiong84@gatech.edu}{zxiong84@gatech.edu} (Corresponding author)
	}\quad\quad  Dongdong Ge\thanks{Antai School of Economics and Management, Shanghai Jiao Tong University, Shanghai, China. \href{mailto:ddge@sjtu.edu.cn}{ddge@sjtu.edu.cn}}
    \quad\quad Yinyu Ye\thanks{Department of Management Science and Engineering, Stanford University, Stanford, CA 94305, USA. \href{mailto:yyye@stanford.edu}{yyye@stanford.edu}} 
	}
\date{\today}
\begin{document}
\global\long\def\inprod#1#2{\left\langle #1,#2\right\rangle }%

\global\long\def\inner#1#2{\left\langle#1,#2\right\rangle}%

\global\long\def\binner#1#2{\big\langle#1,#2\big\rangle}%

\global\long\def\norm#1{\left\|#1\right\|}%

\global\long\def\bnorm#1{\big\Vert#1\big\Vert}%

\global\long\def\Bnorm#1{\Big\Vert#1\Big\Vert}%

\global\long\def\red#1{\textcolor{red}{#1}}%

\global\long\def\blue#1{\textcolor{blue}{#1}}%

\global\long\def\brbra#1{\left(#1\right)}%

\global\long\def\Brbra#1{\left(#1\right)}%

\global\long\def\rbra#1{(#1)}%

\global\long\def\sbra#1{[#1]}%

\global\long\def\bsbra#1{\big[#1\big]}%

\global\long\def\Bsbra#1{\Big[#1\Big]}%

\global\long\def\abs#1{\left|#1\right|}%

\global\long\def\babs#1{\big\vert#1\big\vert}%

\global\long\def\cbra#1{\{#1\}}%

\global\long\def\bcbra#1{\left\{#1\right\}}%

\global\long\def\Bcbra#1{\Big\{#1\Big\}}%

\global\long\def\vertiii#1{\left\vert \kern-0.25ex  \left\vert \kern-0.25ex  \left\vert #1\right\vert \kern-0.25ex  \right\vert \kern-0.25ex  \right\vert }%

\global\long\def\matr#1{\bm{#1}}%

\global\long\def\til#1{\tilde{#1}}%

\global\long\def\wtil#1{\widetilde{#1}}%

\global\long\def\wh#1{\widehat{#1}}%

\global\long\def\mcal#1{\mathcal{#1}}%

\global\long\def\mbb#1{\mathbb{#1}}%

\global\long\def\mtt#1{\mathtt{#1}}%

\global\long\def\ttt#1{\texttt{#1}}%

\global\long\def\dtxt{\textrm{d}}%

\global\long\def\aeq{\overset{(a)}{=}}%

\global\long\def\bignorm#1{\bigl\Vert#1\bigr\Vert}%

\global\long\def\Bignorm#1{\Bigl\Vert#1\Bigr\Vert}%

\global\long\def\rmn#1#2{\mathbb{R}^{#1\times#2}}%

\global\long\def\deri#1#2{\frac{d#1}{d#2}}%

\global\long\def\pderi#1#2{\frac{\partial#1}{\partial#2}}%

\global\long\def\limk{\lim_{k\rightarrow\infty}}%

\global\long\def\trans{\textrm{T}}%

\global\long\def\onebf{\mathbf{1}}%

\global\long\def\zerobf{\mathbf{0}}%

\global\long\def\zero{\bm{0}}%


\global\long\def\Euc{\mathrm{E}}%

\global\long\def\Expe{\mathbb{E}}%

\global\long\def\rank{\mathrm{rank}}%

\global\long\def\range{\mathrm{range}}%

\global\long\def\diam{\mathrm{diam}}%

\global\long\def\epi{\mathrm{epi} }%

\global\long\def\inte{\operatornamewithlimits{int}}%

\global\long\def\dist{\operatornamewithlimits{dist}}%

\global\long\def\proj{\operatorname{Proj}}%

\global\long\def\cov{\mathrm{Cov}}%

\global\long\def\argmin{\operatornamewithlimits{argmin}}%

\global\long\def\argmax{\operatornamewithlimits{argmax}}%

\global\long\def\where{\operatornamewithlimits{where}}%

\global\long\def\tr{\operatornamewithlimits{tr}}%

\global\long\def\dis{\operatornamewithlimits{dist}}%

\global\long\def\sign{\operatornamewithlimits{sign}}%

\global\long\def\prob{\mathrm{Prob}}%

\global\long\def\st{\operatornamewithlimits{s.t.}}%

\global\long\def\dom{\mathrm{dom}}%

\global\long\def\prox{\mathrm{prox}}%

\global\long\def\diag{\mathrm{diag}}%

\global\long\def\and{\mathrm{and}}%

\global\long\def\as{\textup{a.s.}}%

\global\long\def\ae{\textup{a.e.}}%

\global\long\def\Var{\operatornamewithlimits{Var}}%

\global\long\def\Cov{\operatornamewithlimits{Cov}}%

\global\long\def\raw{\rightarrow}%

\global\long\def\law{\leftarrow}%

\global\long\def\Raw{\Rightarrow}%

\global\long\def\Law{\Leftarrow}%

\global\long\def\vep{\varepsilon}%

\global\long\def\dom{\operatornamewithlimits{dom}}%

\global\long\def\err{\text{err}}%

\global\long\def\soc{\operatorname{soc}}%

\global\long\def\rsoc{\operatorname{rsoc}}%

\global\long\def\tsum{{\textstyle {\sum}}}%

\global\long\def\Cbb{\mathbb{C}}%

\global\long\def\Ebb{\mathbb{E}}%

\global\long\def\Fbb{\mathbb{F}}%

\global\long\def\Nbb{\mathbb{N}}%

\global\long\def\Rbb{\mathbb{R}}%

\global\long\def\extR{\widebar{\mathbb{R}}}%

\global\long\def\Pbb{\mathbb{P}}%

\global\long\def\Mrm{\mathrm{M}}%

\global\long\def\Acal{\mathcal{A}}%

\global\long\def\Bcal{\mathcal{B}}%

\global\long\def\Ccal{\mathcal{C}}%

\global\long\def\Dcal{\mathcal{D}}%

\global\long\def\Ecal{\mathcal{E}}%

\global\long\def\Fcal{\mathcal{F}}%

\global\long\def\Gcal{\mathcal{G}}%

\global\long\def\Hcal{\mathcal{H}}%

\global\long\def\Ical{\mathcal{I}}%

\global\long\def\Kcal{\mathcal{K}}%

\global\long\def\Lcal{\mathcal{L}}%

\global\long\def\Mcal{\mathcal{M}}%

\global\long\def\Ncal{\mathcal{N}}%

\global\long\def\Ocal{\mathcal{O}}%

\global\long\def\Pcal{\mathcal{P}}%

\global\long\def\Scal{\mathcal{S}}%

\global\long\def\Tcal{\mathcal{T}}%

\global\long\def\Xcal{\mathcal{X}}%

\global\long\def\Ycal{\mathcal{Y}}%

\global\long\def\Zcal{\mathcal{Z}}%

\global\long\def\i{i}%

\global\long\def\abf{\mathbf{a}}%

\global\long\def\Nbf{\mathbf{N}}%

\global\long\def\bbf{\mathbf{b}}%

\global\long\def\cbf{\mathbf{c}}%

\global\long\def\fbf{\mathbf{f}}%

\global\long\def\sbf{\mathbf{s}}%

\global\long\def\qbf{\mathbf{q}}%

\global\long\def\gbf{\mathbf{g}}%

\global\long\def\lambf{\bm{\lambda}}%

\global\long\def\alphabf{\bm{\alpha}}%

\global\long\def\sigmabf{\bm{\sigma}}%

\global\long\def\thetabf{\bm{\theta}}%

\global\long\def\deltabf{\bm{\delta}}%

\global\long\def\lbf{\mathbf{l}}%

\global\long\def\ubf{\mathbf{u}}%

\global\long\def\pbf{\mathbf{\mathbf{p}}}%

\global\long\def\vbf{\mathbf{v}}%

\global\long\def\rbf{\mathbf{r}}%

\global\long\def\wbf{\mathbf{w}}%

\global\long\def\xbf{\mathbf{x}}%

\global\long\def\tbf{\mathbf{t}}%

\global\long\def\ybf{\mathbf{y}}%

\global\long\def\zbf{\mathbf{z}}%

\global\long\def\dbf{\mathbf{d}}%

\global\long\def\hbf{\mathbf{h}}%

\global\long\def\Wbf{\mathbf{W}}%

\global\long\def\Abf{\mathbf{A}}%

\global\long\def\Ubf{\mathbf{U}}%

\global\long\def\Pbf{\mathbf{P}}%

\global\long\def\Gbf{\mathbf{G}}%

\global\long\def\Ibf{\mathbf{I}}%

\global\long\def\Ebf{\mathbf{E}}%

\global\long\def\ebf{\mathbf{e}}%

\global\long\def\Mbf{\mathbf{M}}%

\global\long\def\Dbf{\mathbf{D}}%

\global\long\def\Qbf{\mathbf{Q}}%

\global\long\def\Lbf{\mathbf{L}}%

\global\long\def\Pbf{\mathbf{P}}%

\global\long\def\Xbf{\mathbf{X}}%

\global\long\def\abm{\bm{a}}%

\global\long\def\bbm{\bm{b}}%

\global\long\def\cbm{\bm{c}}%

\global\long\def\dbm{\bm{d}}%

\global\long\def\ebm{\bm{e}}%

\global\long\def\fbm{\bm{f}}%

\global\long\def\gbm{\bm{g}}%

\global\long\def\hbm{\bm{h}}%

\global\long\def\pbm{\bm{p}}%

\global\long\def\qbm{\bm{q}}%

\global\long\def\rbm{\bm{r}}%

\global\long\def\sbm{\bm{s}}%

\global\long\def\tbm{\bm{t}}%

\global\long\def\ubm{\bm{u}}%

\global\long\def\vbm{\bm{v}}%

\global\long\def\wbm{\bm{w}}%

\global\long\def\xbm{\bm{x}}%

\global\long\def\ybm{\bm{y}}%

\global\long\def\zbm{\bm{z}}%

\global\long\def\Abm{\bm{A}}%

\global\long\def\Bbm{\bm{B}}%

\global\long\def\Cbm{\bm{C}}%

\global\long\def\Dbm{\bm{D}}%

\global\long\def\Ebm{\bm{E}}%

\global\long\def\Fbm{\bm{F}}%

\global\long\def\Gbm{\bm{G}}%

\global\long\def\Hbm{\bm{H}}%

\global\long\def\Ibm{\bm{I}}%

\global\long\def\Jbm{\bm{J}}%

\global\long\def\Lbm{\bm{L}}%

\global\long\def\Obm{\bm{O}}%

\global\long\def\Pbm{\bm{P}}%

\global\long\def\Qbm{\bm{Q}}%

\global\long\def\Rbm{\bm{R}}%

\global\long\def\Ubm{\bm{U}}%

\global\long\def\Vbm{\bm{V}}%

\global\long\def\Wbm{\bm{W}}%

\global\long\def\Xbm{\bm{X}}%

\global\long\def\Ybm{\bm{Y}}%

\global\long\def\Zbm{\bm{Z}}%

\global\long\def\lambm{\bm{\lambda}}%

\global\long\def\alphabm{\bm{\alpha}}%

\global\long\def\albm{\bm{\alpha}}%

\global\long\def\taubm{\bm{\tau}}%

\global\long\def\mubm{\bm{\mu}}%

\global\long\def\yrm{\mathrm{y}}%

\global\long\def\rone{\text{\textrm{I}}}%

\global\long\def\rtwo{\text{II}}%

\global\long\def\rthree{\text{\textrm{III}}}%

\global\long\def\rfour{\text{\textrm{IV}}}%

\global\long\def\rfive{\text{V}}%

\global\long\def\rsix{\text{\textrm{VI}}}%

\global\long\def\rseven{\text{VI\textrm{I}}}%

\global\long\def\reight{\text{VI\textrm{I}I}}%

\global\long\def\PDCS{\text{PDCS}}%

\global\long\def\vbfp{\vbf_{\text{p}}}%

\global\long\def\vbfd{\vbf_{\text{d}}}%

\global\long\def\tp{t_{\text{p}}}%

\global\long\def\td{t_{\text{d}}}%

\global\long\def\coneFamily{\text{Projection-friendly Cone}}

\global\long\def\vpr{v_{\textrm{pr}}}%
\global\long\def\vps{v_{\textrm{ps}}}%
\global\long\def\vpt{v_{\textrm{pt}}}%
\global\long\def\vdr{v_{\textrm{dr}}}%
\global\long\def\vds{v_{\textrm{ds}}}%
\global\long\def\vdt{v_{\textrm{dt}}}%
\global\long\def\dr{d_{\textrm{r}}}%
\global\long\def\ds{d_{\textrm{s}}}%
\global\long\def\dt{d_{\textrm{t}}}%

\global\long\def\bgeq{\overset{(b)}{\geq}}%
\global\long\def\sp{s_{\textrm{p}}}%
\global\long\def\rd{r_{\textrm{d}}}%
\global\long\def\tildesp{\tilde{s}_{\textrm{p}}}%
\global\long\def\tilderd{\tilde{r}_{\textrm{d}}}%

\global\long\def\eps{\varepsilon}%

\maketitle

\begin{abstract} 
In this paper, we introduce a practical GPU-enhanced matrix-free first-order method for solving large-scale conic programming problems, which we refer to as PDCS, standing for the \textbf{P}rimal-\textbf{D}ual \textbf{C}onic Programming \textbf{S}olver. Problems that it solves include linear programs, second-order cone programs, convex quadratic programs, and exponential cone programs. The method avoids matrix factorizations and leverages sparse matrix-vector multiplication as its core computational operation, which is both memory-efficient and well-suited for GPU acceleration. The method builds on the restarted primal-dual hybrid gradient method but further incorporates several enhancements.
Additionally, it employs a bisection-based method to compute projections onto rescaled cones.  Furthermore, cuPDCS is a GPU implementation of PDCS and it implements customized computational schemes that utilize different levels of GPU architecture to handle cones of different types and sizes.  Numerical experiments demonstrate that cuPDCS is generally more efficient than state-of-the-art commercial solvers and other first-order methods on large-scale conic program applications, including Fisher market equilibrium problems, Lasso regression, and multi-period portfolio optimization. Furthermore, cuPDCS also exhibits better scalability, efficiency, and robustness compared to other first-order methods on the conic program benchmark dataset CBLIB. These advantages are more pronounced in large-scale, lower-accuracy settings.
\end{abstract}

\textbf{Keywords:}{ conic optimization, first-order methods, GPUs, second-order cone program, exponential cone program}

\section{Introduction}

Conic programming seeks to find an optimal solution that minimizes (or maximizes) a linear objective function while remaining within the feasible region defined by the intersection of a linear subspace and a convex cone. Many important real-world decision-making models can be formulated as conic programs. Typical examples are linear programs (LPs), second-order cone programs (SOCPs), semidefinite programs (SDPs), and exponential cone programs. Conic programs have extensive applications across numerous fields, including economics (see, e.g., \cite{greene2003econometric,shmyrev2009algorithm}), transportation (see, e.g., \cite{charnes1954stepping}), energy (see, e.g., \cite{chen2024exponential}), healthcare (see, e.g., \cite{mak2015appointment,bandi2019robust}), finance (see, e.g., \cite{rujeerapaiboon2016robust}), manufacturing (see, e.g., \cite{bowman1956production,hanssmann1960linear}), computer science (see, e.g., \cite{cormen2022introduction}), and medicine (see, e.g., \cite{wagner2004large}), among many others. 

Since the mid-20th century, the development of efficient methods for conic programs has been a central topic in the optimization community, with significant efforts on improving computational speed and scalability. Nearly all existing general-purpose solvers for conic programs are based on either the simplex method (mainly for LPs) or the barrier method (also known as the interior-point method, IPM). These methods are implemented in most commercial optimization solvers.

Despite their effectiveness for solving moderate-sized instances, both the simplex and barrier methods become impractical for large-scale conic programs. The primary bottleneck is their reliance on matrix factorizations, such as LU factorization for the simplex method and Cholesky factorization for the IPM, which are required to repeatedly solve linear systems at each iteration. The computational cost of matrix factorizations grows superlinearly with problem size, measured in terms of the number of decision variables, constraints, or nonzero entries in the problem data. 
This limitation arises from two main challenges. First, matrix factorizations are highly memory-intensive, and even sparse matrices can produce dense factors, making storage and computation prohibitive. Second, these factorization-based methods are inherently sequential, limiting their suitability for modern parallel and distributed computing architectures, such as graphics processing units (GPUs). Due to these challenges, early efforts to leverage GPUs in commercial solvers have largely been unsuccessful \citep{gurobi_parallel_distributed_optimization}.

Meanwhile, outside of conic solver development, GPU-based parallel computing has emerged as a powerful tool for scaling up modern computational applications, most notably in the training of large-scale deep learning models. This contrast highlights the need for alternative conic program solvers that better align with modern high-performance computing architectures.

To better harness the power of GPUs and more efficiently solve large-scale conic programs, a matrix-free first-order method (FOM) is a promising choice.
A matrix-free FOM addresses the two challenges mentioned above by eliminating the need for matrix factorizations and instead relying on more computationally efficient operations such as matrix-vector products. These operations are well-suited for GPU acceleration.

In the context of these developments, we develop a practical matrix-free first-order method for large-scale conic programs.
We call the method PDCS, standing for ``Primal-Dual Conic Programming Solver,'' because it is based on the primal-dual hybrid gradient method (PDHG). PDCS is designed to solve conic programs where the feasible region consists of Cartesian products of zero cones, nonnegative cones, second-order cones, and exponential cones. 
We also develop a GPU-enhanced implementation of PDCS that is called cuPDCS. 
As one might expect, for small-scale problems, the IPMs in commercial solvers are often better than first-order methods (including PDCS) in efficiency and robustness. 
However, on the tested instances cuPDCS achieves the following advantages: 
\begin{enumerate}[noitemsep] 
    \item cuPDCS is generally more efficient than state-of-the-art commercial solvers and other existing first-order methods on large-scale conic program applications.
    \item cuPDCS exhibits better scalability, efficiency, and robustness compared to other first-order methods on the small-scale conic program benchmark dataset CBLIB.  
\end{enumerate} 
Furthermore, the above two advantages are more pronounced in large-scale, lower-accuracy settings.
The PDHG algorithm and the associated practical enhancements used in PDCS do not require any matrix factorizations. As a result, the computational bottleneck of PDCS is performing (sparse) matrix-vector products when computing gradients. Thanks to recent advancements in GPU hardware and software optimization, sparse matrix-vector multiplication (SpMV) has been extensively optimized for GPUs, making cuPDCS orders of magnitude faster.

Recently, PDHG has shown promising progress in solving large-scale linear programs and exploiting GPU acceleration. PDHG can be applied to solve the saddle-point formulation of LP \citep{applegate2023faster}. Several large-scale LP solvers have been developed based on PDHG in combination with effective heuristics. Notable implementations include PDLP for CPUs \citep{applegate2021practical} and cuPDLP \citep{lu2023cupdlp} along with its C-language version cuPDLP-C \citep{lu2023cupdlp-c} for GPUs. 
\cite{lu2023cupdlp,lu2023cupdlp-c} have shown that GPU-based implementations of PDHG already outperform classical methods implemented in commercial solvers on a significant number of problem instances. As a result, PDHG has been integrated as a new algorithm for LP in COPT \citep{coptgithub}, Xpress \citep{xpressnwes}, and Gurobi \citep{gurobinews}. Additionally, it has been incorporated into Google OR-Tools \citep{applegate2021practical}, HiGHS \citep{coptgithub}, and NVIDIA cuOpt \citep{nvdianews}. 

For general conic programs, \cite{xiong2024role} have established that PDHG (with restarts) can also solve conic programs beyond LPs. However, no practical PDHG-based method has been developed for conic programs, and it remains unknown whether GPUs can provide similar computational advantages for general-purpose conic programs. 

To better unleash the potential of PDHG and leverage GPU acceleration, there are several enhancements implemented in PDCS (and cuPDCS). These enhancements go beyond a straightforward GPU implementation and involve algorithmic improvements, specialized projection methods, and customized parallelism strategies: (a) \textbf{Algorithmic improvements}: PDCS is based on a variant of PDHG that integrates several effective enhancements, including adaptive Halpern restarts, diagonal rescaling, and adaptive step-size selection. Some of these heuristics have been proven effective in the PDHG-based LP solver PDLP \citep{applegate2021practical}. (b) \textbf{Specialized projection methods}: PDHG requires frequent Euclidean projections onto the underlying cones. While efficient projection methods exist for many common cones, such as nonnegative cones, second-order cones, and exponential cones, projections onto rescaled cones (after applying diagonal rescaling) can become nontrivial. PDCS addresses this by employing bisection-based algorithms. These methods maintain computational complexity comparable to that of projecting onto the original, unrescaled cones. (c) \textbf{Customized parallelism strategies}: Unlike LPs, which involve only nonnegative cones, the underlying cones in general conic programs are often more complex, consisting of Cartesian products of multiple cones of different types and sizes. To better leverage GPU parallelism and accommodate the varying structure of the cones, cuPDCS implements customized computational schemes that utilize different levels of GPU architecture to handle different cones of different types and sizes. 

It should be mentioned that PDCS also applies to general semidefinite programming (SDP) problems, but it requires performing a Euclidean projection onto the semidefinite cone in each iteration, whose cost grows superlinearly with respect to the problem dimension.  For these SDP problems, recent works  \cite{han2024accelerating,han2024low} propose a less expensive GPU-based first-order solver cuLoRADS that is based on the alternating direction method of multipliers (ADMM) and the Burer-Monteiro method. It does not require projections onto the cone and exhibits better performance than commercial solvers in solving many large-scale SDP problem instances. In contrast, our PDCS focuses on cones other than semidefinite cones.

We conduct computational experiments to evaluate the efficiency, stability, and scalability of cuPDCS. On a benchmark dataset (CBLIB) comprising over $2,000$ small-scale conic optimization instances, our method solves the vast majority of problems, demonstrating better numerical stability compared with other methods. To assess scalability, we conduct experiments on three families of large-scale conic program problems: Fisher market equilibrium, Lasso regression, and multi-period portfolio optimization. The experiments demonstrate that cuPDCS scales more effectively to large-scale instances. Across scenarios involving varying cone counts, cuPDCS consistently produced high-quality solutions within reasonable computational time.

\subsection{Related literature}

\textbf{The primal-dual hybrid gradient method (PDHG).}
The primal-dual hybrid gradient (PDHG) method was introduced in \citep{esser2010general,pock2009algorithm} for solving general convex-concave saddle-point problems, of which the saddle-point formulation of conic programs is a special case. PDHG (with restarts) exhibits linear convergence on LPs, and its performance, including linear convergence rates and fast local convergence, has been extensively studied in \citep{applegate2023faster,hinder2024worst,xiong2023computational,xiong2023relation,lu2024geometry,xiong2024accessible}.  \cite{lu2024restarted} introduce a Halpern restart scheme that achieves a complexity improvement of a constant order over the standard average-iteration restart scheme used in PDLP \citep{applegate2021practical}. \cite{xiong2024role} establish the convergence of restarted PDHG for general conic programs, showing that its performance is closely linked to the local geometry of sublevel sets near optimal solutions. \cite{xiong2025high} uses probabilistic analysis to establish a high-probability polynomial-time complexity result for the PDHG used on LPs. \cite{lu2023practical,huang2024restarted} use PDHG to solve large-scale convex quadratic programs, which can also be reformulated as conic programs.

\textbf{Other first-order methods for conic programs.}
Several other first-order methods have been developed for general conic programs. ABIP \citep{lin2021admm,deng2024enhanced} solves conic programs using an ADMM-based IPM applied to the homogeneous self-dual embedding of the conic program. SCS \citep{o2016conic,o2021operator} employs a similar ADMM-based approach to solve the homogeneous self-dual embedding. These ADMM-based methods require solving a linear equation of similar form in each iteration, which is typically handled either through matrix factorization or the conjugate gradient method.
However, matrix factorizations suffer from scalability limitations and are not well-suited for parallel computing on GPUs. The conjugate gradient method, while avoiding explicit factorizations, requires multiple matrix-vector products per iteration, making the choice of tolerance for solving these linear systems a critical heuristic. We will present a comparison between PDCS and these methods in Section \ref{sec:numerical}. 
Beyond general-purpose conic programs, ADMM has also been used to solve semidefinite programs and quadratic programs (which can be equivalently formulated as SOCPs); see \citep{kang2025local} and \citep{stellato2020osqp}.

\textbf{GPU acceleration for conic programs.}  
The simplex method and IPMs generally do not benefit much from GPU due to their reliance on solving linear systems \citep{swirydowicz2022linear,gurobi_parallel_distributed_optimization}.
Recently, NVIDIA developed the first GPU-accelerated direct sparse solver (cuDSS) \citep{NVIDIA_cuDSS}. Based on cuDSS, \cite{CuClarabel} introduced CuClarabel, a GPU-accelerated IPM. The ADMM-based solver SCS \cite{o2016conic,o2021operator} also has a GPU version that leverages the conjugate gradient method for solving linear systems. However, these algorithms still face scalability issues for large-scale problems, as demonstrated in our comparison in Section \ref{sec:numerical}.
During the final preparation of this manuscript, we became aware (by private communication) of a concurrent working project by Haihao Lu, Zedong Peng, and Jinwen Yang that proposes a GPU-implemented PDHG-based solver to solve large-scale SOCPs. 

\subsection{Outline}

The paper is organized as follows. Section~\ref{sec:notations} introduces the notation used throughout the paper. Section~\ref{sec:preliminaries} provides the conic program formulations considered in this paper and the PDHG iterations used as the base algorithm.  
Section~\ref{sec:Practical-enhancement-technique} presents the practical enhancements used in PDCS.
Section~\ref{sec:cupdcs} describes the customized computational schemes that leverage different levels of GPU architecture to efficiently handle various cone types. Finally, Section~\ref{sec:numerical} presents numerical experiments comparing PDCS against other first-order methods and commercial solvers. Omitted proofs and additional details on  experiments are provided in the Appendix.

\subsection{\label{sec:notations}Notation}
Throughout the paper, we use the following notations. Let $[n]:=\{1,\ldots,n\}$ for integer $n$. We use bold letters like $\vbf$ to represent vectors and bold capital letters like $\Abf$ to represent matrices. In general, we use subscripts to denote the coordinates of a vector without additional clarification. 
For matrix $\Mbf\in\mathbb{R}^{m\times n}$, $\|\Mbf\|_\infty$ denotes $\max_{i \in [m]}\sum_{j\in[n]} |\Mbf_{ij}|$.
Denote the identity matrix as $\Ibf$.
Let $\vbf^+=[(\vbf_1^+,\ldots,\vbf_n^+)]$, where $\vbf_i$ is the $i$-th entry of $\vbf$ and $\vbf_i^+=\max\{\vbf_i,0\}$ is the coordinate-wise positive part of a vector. Similar notation for the coordinate-wise negative part $\vbf^{-}$ is defined by using $\vbf^{-}_i=-\min\{\vbf_i,0\}$. We denote the scaled cone as $\Dbf \mcal K$, which implies that $\vbf \in \Dbf \mcal K$ is equivalent to $\Dbf^{-1} \vbf \in \mcal K$. We write $\operatorname{proj}_{\mathcal S}(\xbf)$ for the Euclidean projection of $\xbf$ onto $\mathcal S$. Finally, we use $\diag(x_1,\cdots,x_n)$ to denote the diagonal matrix with diagonal elements $x_1,\cdots,x_n$.

We use the following notations for commonly used cones: \(\zerobf^d\) represents the zero cone, \(\mathbb{R}_+^d\) represents the non-negative cone, \({\mcal K}_{\text{soc}}^{d+1} = \left\{(t, \xbf) \mid \xbf \in \mathbb{R}^{d}, t \in \mathbb{R}, \|\xbf\| \leq t \right\}\) denotes the second-order cone, ${\mcal K}_{\rsoc}^{d+2}=\{(x,y,\zbf)\mid x,y\in\mbb R_{+},\zbf\in\mbb R^{d},\norm{\zbf}^{2}\leq2xy\}$ denotes the rotated second-order cone, $\mcal K_{\exp} :=\bcbra{(r,s,t)\in\mbb R^{3}\mid s>0,t\geq s\cdot\exp\brbra{\frac{r}{s}}}\cup\{(r,s,t)\in\mbb R^{3}\mid s=0,t\geq0,r\leq0\}$ denotes the exponential cone, and \(\mathbb{S}_{+}^{d\times d}\) denotes the semidefinite cone, defined as $\mbb S_{+}^{d\times d}=\left\{\Abf \in \mbb R^{d\times d}:\Abf = \Abf^{\top}, \xbf^\top \Abf \xbf \geq 0,\forall \xbf \in \mbb R^{d}\backslash\{\zerobf\}\right\}$. The dual cone $\mcal K^*$ of $\mcal K$ is the set of non-negative dot products of $\ybf\in \mbb R^d$ and $\xbf\in \mcal K$, which is defined as $\mcal K^*=\{\ybf\in \mbb R^d:\inner{\ybf}{\xbf}\geq 0,\forall \xbf\in \mcal K\}$.

\section{\label{sec:preliminaries}Preliminaries of Conic Optimization and PDHG} 

In this section, we present the formulation of the conic optimization problems (conic programs), along with their corresponding dual problems and equivalent saddle-point formulations. In Section \ref{subsec:PDHG_CP}, we outline the iterations of the basic PDHG method for these conic programs.

We consider the following conic program:
\begin{equation}\label{eq:prob_cp}
    \min_{\substack{\xbf=(\xbf_{1},\xbf_{2}):  \xbf_{1}\in\mbb R^{n_{1}},\xbf_{2}\in\mbb R^{n_{2}}}}\inner{\cbf}{\xbf}\ \ \st\ \mathbf{G}\xbf-\mathbf{h}\in\mcal K_{\textrm{d}}^{*}\ ,\ \mathbf{l}\leq\xbf_{1}\leq\mathbf{u}\ ,\ \xbf_{2}\in\mcal K_{\textrm{p}}\ ,
\end{equation} 
where $\mathbf{G}\in \mbb R^{m\times n}$ is the constraint matrix, and $\lbf$ and $\ubf$ are elementwise lower and upper bounds on $\xbf_1$. The set $\mcal K_{\textrm{p}}$ denotes the primal cone, which is a Cartesian product of nonempty closed smaller cones:  $\mcal K_1 \times \mcal K_2\times\ldots\times \mcal K_{l_{\textrm{p}}}$, where $l_{\textrm{p}}$ denotes the number of the smaller cones. These smaller cones include the zero cone $\zerobf^d$, the nonnegative orthant $\mbb R_+^d$, the second-order cone $\mcal K_{\soc}^{d+1}$, the exponential cone $\mcal K_{\exp}$, the rotated second-order cone $\mcal K_{\rsoc}^{d+2}$ and the dual exponential cone $\mcal K_{\exp}^*$, and we call them \textit{disciplined cones}. Our algorithm also applies to the semidefinite cone ($\mathbb{S}_{+}^{d\times d}$). Here, $\mcal K_{\mathrm d}^*$ denotes the dual cone of $\mcal K_{\mathrm d}$. We use $\mcal K_{\mathrm d}^*$ in the primal so that the dual feasibility condition can be written using $\mcal K_{\mathrm d}$ as follows:
\begin{equation}\label{eq:dual_conic}
    \begin{aligned}
\max_{\substack{\ybf,\  \bm{\lambda} = (\bm{\lambda}_{1},\bm{\lambda}_{2}):
\\
\ybf\in \mbb R^m, \ \bm{\lambda}_{1}\in\mathbb{R}^{n_1}, \ \bm{\lambda}_{2}\in\mathbb{R}^{n_2}}}  \ \langle \ybf, \mathbf{h}\rangle + \lbf^{\top}\bm{\lambda}_{1}^{+}-\ubf^{\top}\bm{\lambda}_{1}^{-}\ ,\ 
 \quad \st  \cbf-\Gbf^{\top}\ybf=\bm{\lambda}\ ,\ \ybf\in\mcal K_{\textrm{d}}, \ \bm{\lambda}_{1}\in\Lambda\ ,\ \bm{\lambda}_{2}\in\mcal K_{\textrm{p}}^{*}\ ,
    \end{aligned}
\end{equation} 
 \begin{equation}\label{eq:defn_Lambda1}
 \text{where}\,\, 
    \Lambda := \Lambda_1\times \Lambda_2 \times \cdots \times \Lambda_{n_1} 
    \ \ \text{ and } \ \
    \Lambda_i :=\left\{\begin{array}{ll}
        \{0\}, & \text{if }\lbf_{i}=-\infty,\ubf_{i}=+\infty \\
        \mbb R^{-}, & \text{if }\lbf_{i}=-\infty,\ubf_{i}\in\mbb R \\
        \mbb R^{+}, & \text{if }\lbf_{i}\in\mbb R,\ubf_{i}=+\infty \\
        \mbb R, & \text{if }\text{otherwise}
        \end{array}\right. \text{ for }i=1,2,\dots,n_1 \ .
\end{equation}
Similarly, the $\mcal{K}_\textrm{d}$ can be expressed as a Cartesian product of smaller disciplined cones: $\mcal{K}_\textrm{d}:=\mcal K_1 \times \mcal K_2\times\ldots\times \mcal K_{l_{\textrm{d}}}$.  Moreover, its dual cone $\mcal{K}_{\textrm{d}}^*$ is correspondingly given by  $\mcal K_1^* \times \mcal K_2^*\times\ldots\times \mcal K_{l_{\textrm{d}}}^*$.

The problem~\eqref{eq:prob_cp} has an equivalent primal-dual formulation, expressed as the following saddle-point problem on the Lagrangian $\mcal L(\xbf,\ybf)$: 
\begin{equation}\label{eq:pd_prob}
    \min_{\xbf \in \mcal X}  \ \max_{\ybf \in \mcal Y}\ \ \mcal L(\xbf,\ybf):=\inner{\cbf}{\xbf} -\inner{\ybf}{\mathbf{G}\xbf-\mathbf{h}}\ ,
\end{equation} 
where $\mcal X:=[\lbf,\ubf]\times \mcal K_{\textrm{p}}$ and $\mcal Y:=\mcal K_{\textrm{d}}$. A saddle point $(\xbf,\ybf)$ of~\eqref{eq:pd_prob} corresponds to a solution satisfying the Karush-Kuhn-Tucker conditions, which in turn corresponds to an optimal primal solution $\xbf$ for \eqref{eq:prob_cp} and an optimal dual solution $(\ybf, \cbf-\Gbf^\top \ybf)$  for \eqref{eq:dual_conic}.

\subsection{\label{subsec:PDHG_CP}PDHG for conic programs}
PDCS builds on PDHG. Let $\tau$ and $\sigma$ be the primal and dual step sizes, respectively. One iteration of PDHG for solving~\eqref{eq:pd_prob} at iterate $\zbf = (\xbf,\ybf)$ (denoted by $\text{OnePDHG}(\zbf)$) is given by:
\begin{equation}
    \hat{\zbf}=\texttt{OnePDHG}(\zbf):=\begin{cases}
    \ensuremath{\hat{\xbf}=\proj_{[\lbf,\ubf]\times\mcal K_{\textrm{p}}}\bcbra{\xbf-\tau(\cbf-\Gbf^{\top}\ybf)}}\\
    \ensuremath{\hat{\ybf}=\proj_{\mcal K_{\textrm{d}}}\bcbra{\ybf+\sigma(\hbf-\Gbf(2\hat{\xbf}-\xbf))}}
    \end{cases}.\label{eq:PDHG}
\end{equation} 
It has been shown in \cite{chambolle2011first,lu2022infimal,xiong2024role} that the iterates generated by \eqref{eq:PDHG} globally converge to the saddle point of~\eqref{eq:pd_prob} and achieve a linear convergence rate for LP problems,  provided that $\tau\sigma$ is sufficiently small.

The single PDHG iteration \eqref{eq:PDHG} does not require any matrix factorization and its core operations are sparse matrix--vector multiplications and projections onto $[\lbf,\ubf]\times\mcal K_{\textrm{p}}$ and $\mcal K_{\textrm{d}}$.  
Matrix-vector multiplications are typically cheaper than matrix factorizations in both memory usage and computational complexity. 
Moreover, projections onto the Cartesian product of disciplined cones can be parallelized. Specifically, each component of the vector (corresponding to different disciplined cones) can be projected independently, i.e., 
\begin{equation}\label{eq:multi_cone_proj}
    \proj_{\mcal K_1\times\ldots \times \mcal K_l}(\xbf)=[\proj_{\mcal K_1}(\xbf_{[1]})^{\top},\ldots, \proj_{\mcal K_l}(\xbf_{[l]})^{\top}]^\top\ ,
\end{equation}
where $\xbf_{[i]}$ denotes the component of $\xbf$ corresponding to the  $i$-th disciplined cone $\mcal K_i$. 
Detailed discussions on projections onto disciplined cones can be found in~\cite{parikh2014proximal,friberg2023projection}. In particular, projection onto $\zerobf^d$, $\mbb R_{+}^{d}$, $\mcal K_{\soc}^{d+1}$ and $\mcal K_{\exp}$ are computationally straightforward. 
The only ``expensive'' projection above is the projection onto the $\mbb S_{+}^{d\times d}$, which used to require an eigendecomposition, but a recent work \citep{kang2025factorization} proposes a matrix-free projection method that can achieve significant speedups via GPU implementation. Notably, the  $\mcal K_{\rsoc}^{d+2}$ is equivalent to the second-order cone via reformulation, and the projection onto the $\mcal K_{\exp}^*$ closely resembles that of the exponential cone, which is shown in Appendix~\ref{app:proj:exp}. As a result, PDHG remains a fully matrix-free method, making it scalable for most large-scale conic program problems.

\section{A Practical Matrix-Free Primal-Dual Method for Conic Programs} \label{sec:Practical-enhancement-technique}
Built upon the basic PDHG framework, PDCS integrates several techniques, including adaptive step-size selection, adaptive reflected Halpern iteration, adaptive restart, and primal weight updates. The overall algorithmic framework is summarized in Algorithm~\ref{alg:PDCS}. 
\begin{algorithm}[htbp]
\caption{PDCS: Primal-Dual Conic Programming Solver (without preconditioning)\label{alg:PDCS}}
\small
    \begin{algorithmic}[1]
        \Require{Initial iterate $\bar \zbf^{0,0} = \zbf^{0,0}=(\xbf^{0,0},\ybf^{0,0})$, initial step-size $\eta^{0,0}$, initial primal weight $\omega^0$, $t\leftarrow 0$} 
    \Repeat
    \State{$k\leftarrow 0$}
    \While{the restart condition is not triggered}
        \State{$\hat{\zbf}^{t,k+1}, \eta^{t,k+1}\leftarrow$ \texttt{AdaptiveStepPDHG}($\zbf^{t,k}, \omega^t, \eta^{t,k}$)\label{alg_pdcs:line4} }
        \State{$\beta^{t,k}\leftarrow$ \texttt{AdaptiveReflectionParameter}($\hat{\zbf}^{t,k+1}$)\label{alg_pdcs:line5}} 
        \State{${\zbf}^{t,k+1}\leftarrow 
        \texttt{ReflectedHalpern}(\hat{\zbf}^{t,k+1},\zbf^{t,k},{\zbf}^{t,0},\beta^{t,k})${\small$:=\tfrac{k+1}{k+2}((1+\beta^{t,k})\hat{\zbf}^{t,k+1}-\beta^{t,k}\zbf^{t,k}) + \frac{1}{k+2}\zbf^{t,0}$}\label{alg:line:ref_Halpern}}
        \State{$\bar{\zbf}^{t,k+1}\leftarrow \tsum_{i=1}^{k+1}\eta^{t,i}\zbf^{t,i}/{\tsum_{i=1}^{k+1}\eta^{t,i}}$\label{alg:line:average}}
        \State{$\zbf_{c}^{t,k+1}\leftarrow\texttt{GetRestartCandidate}(\zbf^{t,k+1},\bar{\zbf}^{t,k+1})$\label{alg:line:GetRestartCandidate}}
        \State{$k\leftarrow k+1$}
    \EndWhile
    \State{$\zbf^{t+1,0}\leftarrow \zbf_{c}^{t,k}$, $\omega^{t+1}\leftarrow \texttt{PrimalWeightUpdate}(\zbf^{t+1,0}, \zbf^{t,0},\omega^{t})$,   $t\leftarrow t+1$}
    \Until{termination criteria hold}
    \State{\textbf{Return} $\zbf^{t,0}$.}
    \end{algorithmic}
\end{algorithm}  

To briefly summarize the main components in Algorithm~\ref{alg:PDCS}: the function \texttt{AdaptiveStepPDHG} in Line~\ref{alg_pdcs:line4} performs a line search to determine an appropriate step size, instead of using the conservative theoretical step sizes.
Lines \ref{alg_pdcs:line5}, \ref{alg:line:ref_Halpern}, and \ref{alg:line:average} together implement an adaptive reflected Halpern iteration, which is a variant of the reflected Halpern iteration that was first used by \cite{lu2024restarted} to accelerate PDHG for LPs.  
The function \texttt{GetRestartCandidate} in Line~\ref{alg:line:GetRestartCandidate} selects a candidate for restarting based on a measure of optimality (either the normalized duality gap or KKT error). At each outer iteration, the primal weight $\omega$ is updated based on the progress of primal and dual iterates, and this weight plays a central role in balancing the primal and dual step sizes via the function \texttt{AdaptiveStepPDHG}. Most of the above heuristics have been common in PDHG-based LP solvers, including \cite{applegate2021practical,lu2024restarted,lu2023cupdlp} and others. While many of these heuristics originate in PDHG-based LP solvers, extending them to general conic programs is nontrivial and requires additional design. For example, a direct implementation of the Halpern iteration does not yield a clear benefit, so we propose a function \texttt{AdaptiveReflectionParameter} in Line~\ref{alg_pdcs:line5} to compute the reflection coefficient and dampen oscillations while preserving fast local convergence.
Overall, all of these heuristics preserve the matrix-free structure of PDHG. They use only matrix-vector multiplications and projections onto the disciplined cones, and thus do not change the matrix-free nature of the base PDHG method. Details of the algorithm and implementation are presented in the technical report \citep{alg_detail}.

\begin{figure}[htbp]
\begin{centering}
\begin{subfigure}[t]{0.48\columnwidth}%
\includegraphics[width=0.95\textwidth]{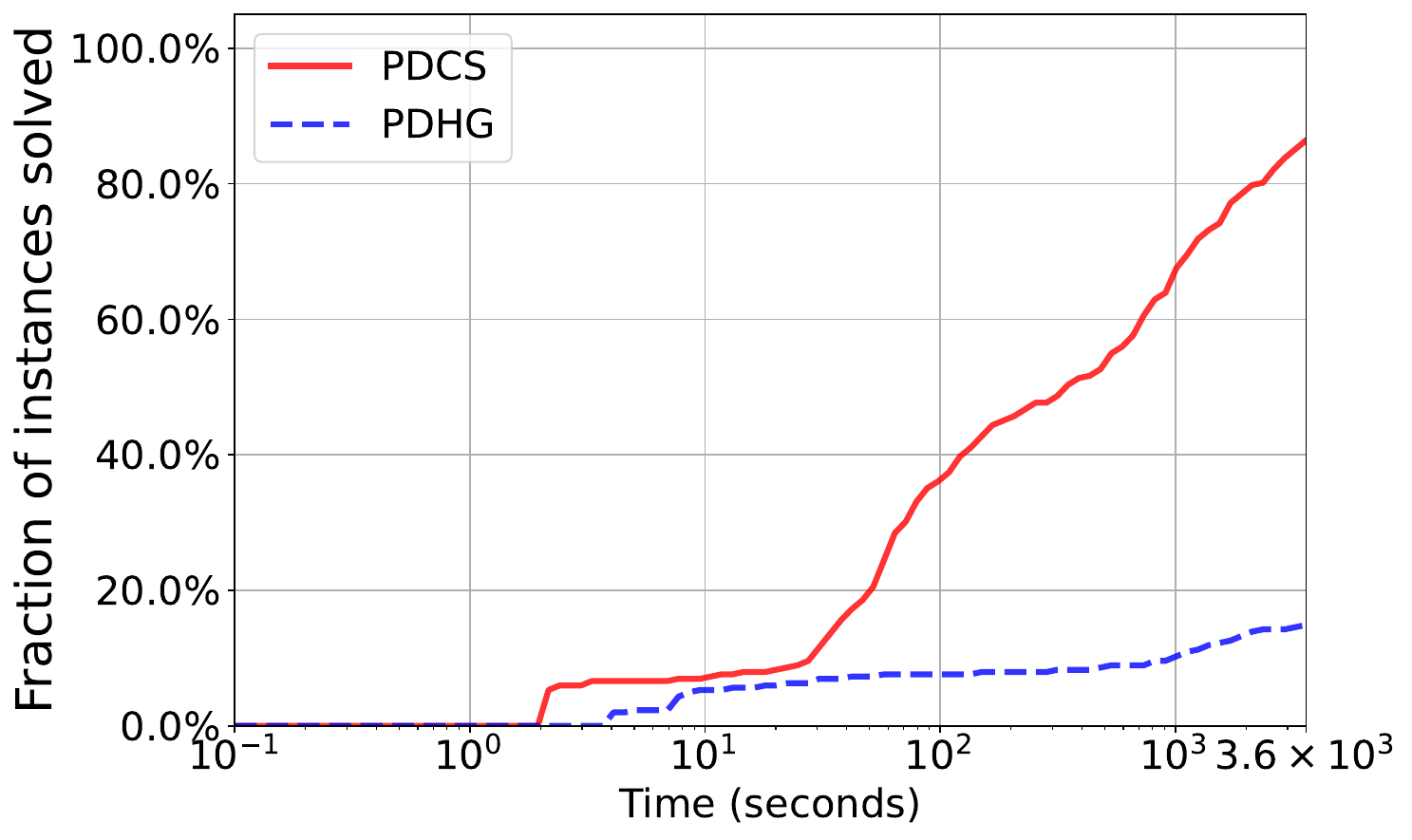}%
\caption{\label{fig:solved_time_soc}\small  Problems with second-order cone constraints}
\end{subfigure}
\hfill
\begin{subfigure}[t]{0.48\columnwidth}%
\includegraphics[width=0.95\textwidth]{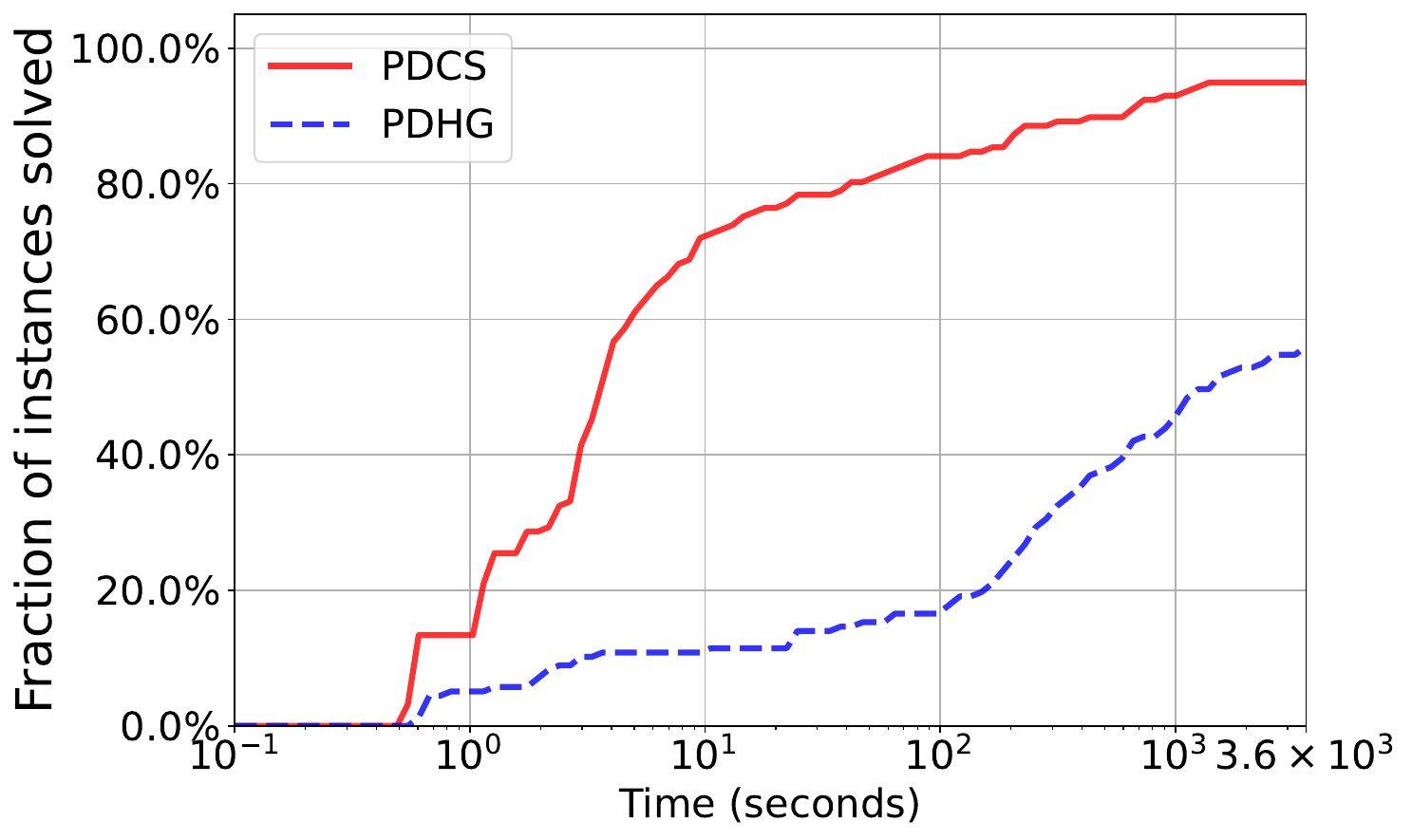}%
\caption{\label{fig:solved_time_exp}\small  Problems with exponential and second-order cone constraints}
\end{subfigure}
\end{centering}
\caption{\label{fig:solved_count_time}Performance of PDCS and PDHG in terms of the running time.}
\end{figure}

We compare the standard PDHG method with PDCS. The performance of both methods is evaluated on two subsets of the \texttt{CBLIB} dataset. We say an instance is solved if termination tolerances on feasibility and optimality fall below $10^{-6}$. Specific experiment details are reported in Appendix~\ref{app:pdhg_pdcs}. 
Figure~\ref{fig:solved_count_time} plots the fraction of instances solved versus wall-clock time. 
Figure~\ref{fig:solved_time_soc} shows the results on the dataset where all constraints are second-order cones, while Figure~\ref{fig:solved_time_exp} shows results on the dataset with exponential and second-order cone constraints. On both datasets, PDCS significantly outperforms PDHG, solving substantially more instances within the time limit.

Algorithm~\ref{alg:PDCS} presents the PDCS on the original instance without preconditioning. In the practical version of PDCS, we also apply a diagonal rescaling to improve conditioning of the problem instance. 
This preprocessing yields an equivalent conic program in which the PDHG iteration is unchanged in scaled variables, but requires Euclidean projections onto diagonally rescaled cones of the form $\Dbf \mcal K:= \{(d_1 x_1, \dots, d_n x_n) \mid (x_1, \dots, x_n) \in \mcal K\}$ for a diagonal matrix $\Dbf = \text{diag}(d_1, \dots, d_n)$.
For LP cones this is trivial; for second-order cones and exponential cones we use the following characterizations to compute the projections.

\begin{theorem}\textbf{\emph{Projection onto $\Dbf\mcal K_{\soc}^{n+1}$.}}\label{thm:soc_proj}
The projection of $(t,\xbf)$ onto rescaled cone $\Dbf\mcal K_{\soc}^{n+1}$ is given as follows. For $\Dbf$, let $\hat{\Dbf}$ denote $\diag(\hat{d}_2,\ldots,\hat{d}_{n+1})$ with $\hat{d}_i = d_{i}/d_1,\forall i\geq 2$.
If $t \le 0$ and $\|\hat{\Dbf}\xbf\| \le -t$, then $\proj_{\Dbf \mcal K_{\soc}^{n+1}}\bcbra{(t,\xbf)} = (0,\zerobf)$.
If $\norm{\hat{\Dbf}^{-1}\xbf}\leq t$, then $\proj_{\Dbf \mcal K_{\soc}^{n+1}}\bcbra{(t,\xbf)}=(t,\xbf)$. 
If $t = 0$ and $\xbf \neq \zerobf$, then $\proj_{\Dbf \mcal K_{\soc}^{n+1}}\bcbra{(t,\xbf)} = \left(\|(\hat{\Dbf} + \hat{\Dbf}^{-1})^{-1}\xbf\|, (\Ibf_{n\times n} + \hat{\Dbf}^{-2})^{-1}\xbf \right)$.
Otherwise, if $\norm{\hat{\Dbf}^{-1}\xbf}> t$, then let $\lambda > 0$ be the solution of the following univariate equation of $\lambda$: 
\begin{equation}\label{eq:bisection-soc}
\sum_{i=1}^{n}\Brbra{\frac{\hat{d}_{i+1}^{-1}}{1+2\lambda\hat{d}_{i+1}^{-2}}\xbf_{i}}^{2}-\frac{t^{2}}{(1-2\lambda)^{2}}=0\, , \ \frac{t}{(1-2\lambda)}>0.
\end{equation}
Then the projection is given by $\proj_{\Dbf \mcal K_{\soc}^{n+1}}\bcbra{(t,\xbf)}=\brbra{(1-2\lambda)^{-1}t,(\Ibf+2\lambda\hat{\Dbf}^{-2})^{-1}\xbf}$.
\end{theorem}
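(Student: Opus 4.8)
The plan is to cast the projection as a small convex program, dispose of the two degenerate regimes by elementary means, and reduce the remaining case to a one-dimensional equation via KKT. First I would normalize: since any cone is invariant under multiplication by a positive scalar, $\Dbf\mcal K_{\soc}^{n+1}=\tfrac{1}{d_1}\Dbf\mcal K_{\soc}^{n+1}$, so without loss of generality the first diagonal entry of the scaling matrix is $1$ and the others are the $\hat d_i$; unwinding the definition of a rescaled cone then gives
\[
\Dbf\mcal K_{\soc}^{n+1}=\mcal C:=\bcbra{(s,\ybf)\in\mbb R\times\mbb R^{n}:\norm{\hat{\Dbf}^{-1}\ybf}\le s}.
\]
Hence $\proj_{\Dbf\mcal K_{\soc}^{n+1}}(t,\xbf)$ is the minimizer of $\tfrac12(s-t)^{2}+\tfrac12\norm{\ybf-\xbf}^{2}$ over $\mcal C$, and I would use the convex, Slater-regular description $\norm{\hat{\Dbf}^{-1}\ybf}-s\le0$ so that the KKT conditions are both necessary and sufficient.

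The two trivial regimes are immediate. Membership $(t,\xbf)\in\mcal C$ means exactly $\norm{\hat{\Dbf}^{-1}\xbf}\le t$, and then the point is its own projection. For the zero regime, a short computation from the definition (using that the pairs $(\norm{\zbf},\hat{\Dbf}\zbf)$ generate $\mcal C$) identifies the polar cone $\mcal C^{\circ}=\{(a,\bbf):\norm{\hat{\Dbf}\bbf}\le-a\}$; combined with the Moreau identity $\proj_{\mcal C}(v)=\zerobf\iff v\in\mcal C^{\circ}$ this yields the first displayed case and, equivalently, shows that $\proj_{\mcal C}(t,\xbf)\neq\zerobf$ whenever the condition ``$t\le0$ and $\norm{\hat{\Dbf}\xbf}\le-t$'' fails.

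For the remaining boundary regime---$(t,\xbf)$ in neither set above---the projection is a nonzero boundary point, so $s=\norm{\hat{\Dbf}^{-1}\ybf}>0$, the constraint is active, and the point is regular. Writing the KKT multiplier as $2\lambda s$ with $\lambda\ge0$, stationarity reads $s-t=2\lambda s$ and $(\Ibf+2\lambda\hat{\Dbf}^{-2})\ybf=\xbf$, hence $s=t/(1-2\lambda)$ and $\ybf=(\Ibf+2\lambda\hat{\Dbf}^{-2})^{-1}\xbf$; substituting into the active constraint $\norm{\hat{\Dbf}^{-1}\ybf}^{2}=s^{2}$ and using that $\hat{\Dbf}^{-1}(\Ibf+2\lambda\hat{\Dbf}^{-2})^{-1}$ is diagonal produces precisely \eqref{eq:bisection-soc}, with the sign requirement $t/(1-2\lambda)>0$ forcing $\lambda>0$. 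The excluded value $\lambda=\tfrac12$ is the singularity of this parametrization and corresponds exactly to the case $t=0$, $\xbf\neq\zerobf$: there stationarity pins the multiplier ratio to $\tfrac12$, giving $\ybf=(\Ibf+\hat{\Dbf}^{-2})^{-1}\xbf$ and $s=\norm{\hat{\Dbf}^{-1}\ybf}=\norm{(\hat{\Dbf}+\hat{\Dbf}^{-1})^{-1}\xbf}$, the stated formula.

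What remains is to show that \eqref{eq:bisection-soc} has a unique solution on the branch selected by the sign condition, so the bisection is well posed. Uniqueness is in fact free: any such $\lambda$ yields, through the formulas above, a point satisfying the (sufficient) KKT conditions of the convex projection problem and therefore equal to the unique projection, so at most one $\lambda$ qualifies. For existence, when $t>0$ I would restrict $\lambda$ to $(0,\tfrac12)$, where the left-hand side of \eqref{eq:bisection-soc} equals $\norm{\hat{\Dbf}^{-1}\xbf}^{2}-t^{2}>0$ at $\lambda=0$, is strictly decreasing, and tends to $-\infty$ as $\lambda\uparrow\tfrac12$; when $t<0$ I would restrict $\lambda$ to $(\tfrac12,\infty)$, where the left-hand side tends to $-\infty$ as $\lambda\downarrow\tfrac12$ and is asymptotic to $\tfrac{1}{4\lambda^{2}}(\norm{\hat{\Dbf}\xbf}^{2}-t^{2})\to0^{+}$, the positivity being exactly the failure of the polar-cone condition. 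The hard part is not any single deep estimate but the bookkeeping: checking that the four displayed conditions are mutually exclusive and exhaustive, that $\xbf\neq\zerobf$ (hence $\ybf\neq\zerobf$) throughout the boundary regime so the non-smooth constraint is differentiable at the optimum, and that the sign convention in \eqref{eq:bisection-soc} correctly splits the $t>0$ and $t<0$ branches; and, if one prefers an elementary uniqueness argument to the KKT-sufficiency one, establishing monotonicity of the relevant scalar function on each branch (trivial for $t>0$, somewhat more delicate for $t<0$).
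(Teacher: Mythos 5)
Your proposal is correct and follows essentially the same route as the paper: cast the projection as a convex program, dispose of the membership and polar-cone cases via Moreau, and use KKT (with Slater sufficiency) to reduce the boundary case to the scalar equation \eqref{eq:bisection-soc}, solved by the intermediate value theorem on $(0,\tfrac12)$ or $(\tfrac12,\infty)$ according to the sign of $t$. Your version is in fact slightly more complete than the paper's on the $t<0$ branch, where you supply the asymptotic $f(\lambda)\sim\tfrac{1}{4\lambda^{2}}(\norm{\hat{\Dbf}\xbf}^{2}-t^{2})$ that justifies the sign change the paper only asserts, and you make the uniqueness-via-KKT-sufficiency argument explicit.
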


In practice, the fourth case is handled using a root-finding method (such as a bisection method) because when regarded as a function, the left-hand side of the equality in \eqref{eq:bisection-soc} is continuous and has values of opposite sign at the endpoints.

Let $\Dbf=\diag(d_r,d_s,d_t)\succ 0$ and $v_0=(r_0,s_0,t_0)\in\mathbb{R}^3$.
The projection onto the rescaled exponential cone $\Dbf\mcal K_{\exp}$ can also be computed by case analysis. See an informal statement of the result below. 
The proof of Theorem~\ref{thm:soc_proj} is given in Appendix~\ref{app:proj:soc}.
A full statement and proof for the diagonally scaled exponential cone projection are provided in Appendix~\ref{app:proj:exp}.

\begin{theorem}\textbf{\emph{Projection onto $\Dbf\mcal K_{\exp}$ (Informal)}}.\label{thm:exp_proj_simplified}
Let $\Dbf=\diag(d_r,d_s,d_t)\succ 0$. For any $v_0=(r_0,s_0,t_0)$, the projection
$\proj_{\Dbf\mathcal K_{\exp}}(v_0)$ is given as follows: If $v_0$ is already in $\Dbf\mathcal K_{\exp}$, then $\proj_{\Dbf\mathcal K_{\exp}}(v_0)=v_0$. If $v_0$ is in the polar cone $-\Dbf^{-1}\mathcal K_{\exp}^*$, then $\proj_{\Dbf\mathcal K_{\exp}}(v_0)=\zerobf$. If $r_0\le 0$ and $s_0\le 0$, then $\proj_{\Dbf\mathcal K_{\exp}}(v_0)=(r_0,0,t_0^+)$. Otherwise, $\proj_{\Dbf\mathcal K_{\exp}}(v_0)$ is determined by a scalar $\rho$ that is the root of a continuous univariate function
$h(\rho)$ on an explicitly computable interval. Once $\rho$ is obtained (via solving the root-finding problem), $\proj_{\Dbf\mathcal K_{\exp}}(v_0)$ is given by closed-form expressions in $\rho$.   
\end{theorem}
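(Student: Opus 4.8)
The plan is to reduce the projection to a weighted projection onto the \emph{unscaled} exponential cone and then split into the four cases via Moreau decomposition and KKT analysis. Writing any $v\in\Dbf\mcal K_{\exp}$ as $v=\Dbf w$ with $w=(r,s,t)\in\mcal K_{\exp}$, the projection becomes
\[
\proj_{\Dbf\mcal K_{\exp}}(v_0)=\Dbf w^\star,\qquad w^\star=\argmin_{w\in\mcal K_{\exp}}\tfrac12\bnorm{\Dbf w-v_0}^2,
\]
a strictly convex program with a unique minimizer. First I would record the scaled polar-cone identity $(\Dbf\mcal K_{\exp})^\circ=-\Dbf^{-1}\mcal K_{\exp}^*$, which follows from $(\Dbf\mcal K)^*=\Dbf^{-1}\mcal K^*$ and $\Dbf=\Dbf^\top$. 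Then Moreau decomposition $v_0=\proj_{\Dbf\mcal K_{\exp}}(v_0)+\proj_{(\Dbf\mcal K_{\exp})^\circ}(v_0)$ immediately yields the first two cases: the projection is $v_0$ when $v_0\in\Dbf\mcal K_{\exp}$, and it is $\zerobf$ when $v_0$ lies in the polar cone $-\Dbf^{-1}\mcal K_{\exp}^*$.

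Next I would treat the degenerate ``flat'' boundary. The flat face $F=\{(r,0,t):r\le 0,\ t\ge 0\}$ of $\mcal K_{\exp}$ is invariant under $\Dbf$ (since $d_r,d_t>0$), so $\Dbf F=F$, and the Euclidean projection onto $F$ is $(\min(r_0,0),0,t_0^+)$. I would verify through the KKT conditions that when $r_0\le 0$ and $s_0\le 0$ the optimum forces $s^\star=0$, so the projection coincides with that onto $F$ and equals $(r_0,0,t_0^+)$, matching the third case.

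The substantive case is when the minimizer lies on the curved boundary $\{(r,s,t):s>0,\ t=s\,e^{r/s}\}$. Here I would write the Lagrangian stationarity conditions for the constraint $g(w)=s\,e^{r/s}-t\le 0$ with multiplier $\mu\ge 0$, namely $d_r(d_r r-r_0)+\mu e^{r/s}=0$, $d_s(d_s s-s_0)+\mu e^{r/s}(1-r/s)=0$, and $d_t(d_t t-t_0)-\mu=0$, together with $t=s\,e^{r/s}$. Introducing the single scalar $\rho:=r/s$, the identities $t=s e^{\rho}$ and $\mu=d_t^2 s e^{\rho}-d_t t_0$ let me solve the $r$-stationarity equation linearly for $s$, obtaining the closed form $s(\rho)=(d_r r_0+d_t t_0 e^{\rho})/(d_t^2 e^{2\rho}+d_r^2\rho)$, and hence $r=\rho\, s(\rho)$ and $t=s(\rho)e^{\rho}$. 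Substituting these into the remaining $s$-stationarity equation produces a single univariate residual $h(\rho)=0$; its root $\rho$ then determines $\Dbf(r,s,t)$ through the above closed forms, which is precisely the scalar and root-finding step asserted in the statement.

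Finally I would establish existence of the root on an explicit interval. I would identify the set of $\rho$ for which both feasibility requirements $s(\rho)>0$ and $\mu(\rho)\ge 0$ hold—an interval delimited by the sign changes of the denominator $d_t^2 e^{2\rho}+d_r^2\rho$ and of the numerator—and then show that $h$ is continuous there and takes opposite signs at the endpoints, so that a bisection method converges. The main obstacle will be exactly this last step: pinning down the correct interval for $\rho$ and proving the sign change (ideally monotonicity of $h$, for uniqueness) in the presence of the transcendental terms $e^{\rho}$ and $e^{2\rho}$, while simultaneously checking that the recovered $w^\star$ satisfies $s>0$ and $\mu\ge 0$, so that it is the genuine and, by strict convexity, unique minimizer rather than a spurious stationary point. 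I would also confirm exhaustiveness of the case partition by matching the limiting behavior of $h$ as $\rho\to-\infty$ (the flat face, linking back to the third case) against the polar-cone membership test of the second case.
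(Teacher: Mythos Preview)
Your strategy for the first three cases is essentially the paper's: Moreau decomposition for cases~1--2 and direct verification on the flat face for case~3 (the paper simply checks that $\vbf_p=(r_0,0,t_0^+)$, $\vbf_d=(0,s_0,-t_0^-)$ satisfy the Moreau system rather than arguing that $s^\star=0$ is forced, but either works).

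For case~4 your parametrization differs genuinely from the paper's, and this difference is exactly what makes the ``main obstacle'' you flag either easy or hard. You eliminate via the $r$- and $t$-stationarity equations, which gives $s(\rho)=(d_r r_0+d_t t_0 e^{\rho})/(d_t^2 e^{2\rho}+d_r^2\rho)$ with a \emph{transcendental} denominator; the zero of that denominator is not available in closed form, so the interval on which $s(\rho)>0$ and $\mu(\rho)\ge 0$ is not ``explicitly computable'' in the sense the theorem intends. The paper instead parametrizes the Moreau pair directly on the two cone boundaries, writing $\vbf_p=s_p\,(d_r\rho,\,d_s,\,d_t e^{\rho})$ and $\vbf_d=r_d\,(d_r^{-1},\,(1-\rho)d_s^{-1},\,-d_t^{-1}e^{-\rho})$; orthogonality is then automatic, and matching the $r$- and $s$-coordinates of $\vbf_p+\vbf_d=\vbf_0$ gives a \emph{linear} system for $(s_p,r_d)$ whose coefficients are polynomials in $\rho$ and involve only $r_0,s_0,d_r,d_s$ (no $t_0$, no exponentials). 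Consequently $s_p(\rho),r_d(\rho)$ are rational with quadratic denominator $d_s^2 d_r^{-2}+\rho(\rho-1)$, the positivity interval is bounded by the explicit points $a_3=r_0 d_s/(s_0 d_r)$ and $a_4=1-s_0 d_s/(r_0 d_r)$, and the sign change of $h$ at those endpoints reduces exactly to the failed membership tests $\vbf_0\notin\Dbf\mcal K_{\exp}$ and $\vbf_0\notin-\Dbf^{-1}\mcal K_{\exp}^*$. Your route can in principle be pushed through, but the paper's choice of which two equations to solve first is what converts the interval analysis from a transcendental problem into a purely algebraic one.
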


\section{\label{sec:cupdcs}cuPDCS: A PDCS Implementation with GPU Enhancements}
In this section, we present the design of PDCS optimized for efficiently exploiting GPU architectures. We call this GPU implementation cuPDCS. It is available at \url{https://github.com/ZikaiXiong/PDCS}. It is designed to be easy to use, even for users without deep expertise in conic optimization. It integrates with mathematical programming modeling platforms, JuMP \citep{LubinDunningIJOC} and CVXPY~\citep{diamond2016cvxpy}. 

As demonstrated in previous sections, the primary computational bottlenecks of PDCS are matrix-vector multiplications and projections onto cones. While matrix-vector multiplications are already well-optimized on GPUs (see, e.g., \cite{bell2008efficient}), projections onto cones remain comparatively under-optimized. 
 Consequently, developing an efficient projection strategy is essential for achieving overall performance gains in our cuPDCS.

\begin{wrapfigure}{r}{0.45\textwidth}
\vspace{-1pt}
\centering
\resizebox{\linewidth}{!}{
    \includegraphics{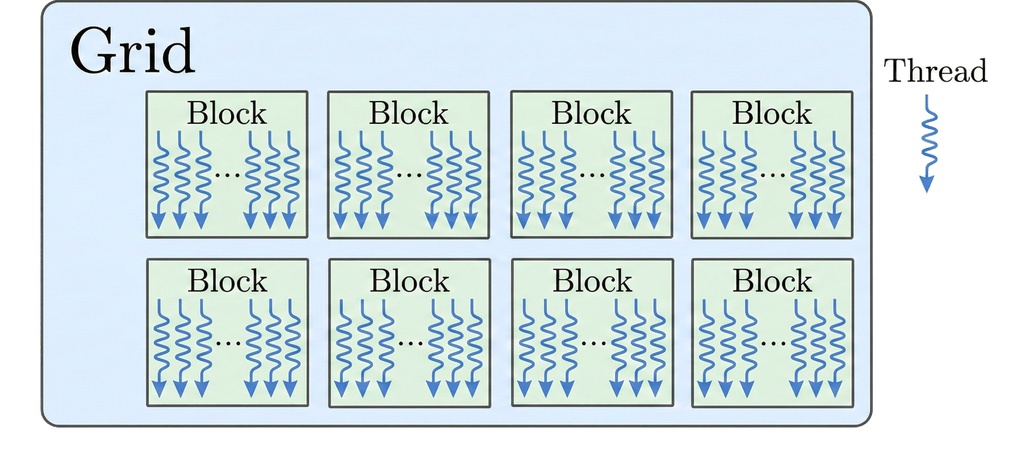}}
    \captionsetup{font={stretch=0.5}}
\caption{Illustration of the GPU architecture.}
\label{fig:gpu_architecture}
\end{wrapfigure}

We first briefly review the hierarchical organization of GPU parallel computing, which comprises three levels of execution granularity: grid, block, and thread (Figure~\ref{fig:gpu_architecture}). Upon dispatch to the GPU, each workload is organized into a single grid consisting of multiple blocks; each block contains numerous threads that share block-level (shared) memory and can synchronize their execution.
Accordingly, it is crucial to allocate computational resources properly. 
Moreover, empirical evidence indicates that frequent data transfers between the CPU and GPU result in significant overhead. This is a primary reason why existing GPU solvers~(e.g., \citep{lu2023cupdlp,lu2023cupdlp-c}) almost all emphasize executing the most computations on the GPU.
Consequently, a fundamental design principle for projection operators is to reduce data transfers as much as possible between the CPU and GPU while maximizing the proportion of computations executed on the GPU. In the case of multi-cone projection, it is also essential to allocate computational resources in a manner that enables all cone projections to be executed in parallel.
Recall that, as indicated in \eqref{eq:multi_cone_proj}, cone projections can be carried out by separately projecting onto rescaled disciplined cones. As discussed in Section~\ref{sec:Practical-enhancement-technique}, projections onto second-order and exponential cones can be reformulated as root-finding problems that typically involve a few vector inner products and scalar multiplications.

In our GPU implementation of PDCS, we employ three parallel computing strategies. \textbf{Grid-wise}: Assign the entire grid to execute each cone projection sequentially. Vector inner products are computed using the cuBLAS library, which delivers high-performance capability for individual operations. However, each cuBLAS call also incurs significant launch and resource-allocation overhead. \textbf{Block-wise}: Assign one disciplined cone projection to each CUDA block. This approach amortizes the kernel-launch overhead by simultaneously processing many cones within a single kernel invocation.  \textbf{Thread-wise}: Assign one disciplined cone projection to each CUDA thread. This strategy removes nearly all synchronization and kernel-launch overhead, allowing simultaneous processing of a large number of tasks by distributing them across individual threads.

A distinguishing feature of the projection phase is that the number, dimension, and type of individual disciplined cones can vary substantially. Consequently, no single parallelization strategy can be expected to perform optimally across all cone types or configurations. For instance, consider the task of projecting a randomly generated vector in $\mathbb{R}^{m(d+1)}$ onto a multi-block second-order cone  ${\mcal K}:= {\mcal K}_{\mathrm{soc}}^{d+1}\times{\mcal K}_{{\mathrm{soc}}}^{d+1}\times\cdots\times{\mcal K}_{{\mathrm{soc}}}^{d+1}$ where there are $m$ second-order cone blocks. In our experiments, we fix the dimension $d$ of each cone as
$d= \left\lceil {1.2\times 10^9}/{m} \right\rceil$,
thus ensuring that the total dimensionality of the cone remains at least $1.2\times 10^9$. 
Figure~\ref{fig:soc_res} compares the performance of the three proposed strategies---grid-wise (one grid per projection), block-wise (one block per projection), and thread-wise (one thread per projection)---under this configuration.
In this test, a time limit of 15 seconds is imposed, and we report the average runtime along with the standard deviation (indicated by the shaded region) over 10 independent trials. Note that the CPU implementation of these projections always requires more than 15 seconds; hence, its data points are represented by a horizontal dotted line at 15 seconds.

Intuitively, the grid-wise strategy benefits from the highly optimized norm computations provided by the cuBLAS library, rendering it particularly effective for a small number of high-dimensional tasks. However, this strategy suffers when addressing a large number of small-dimension tasks, as each cuBLAS function call incurs non-negligible overhead that accumulates significantly across many projections. In contrast, the block-wise approach, which effectively utilizes shared memory to perform in-block reductions, is ideally suited for a moderate number of tasks with moderate dimensions. Finally, the thread-wise method may become the best when the workload consists of a very large number of very low-dimensional tasks, as each small cone projection can be efficiently handled by an individual thread, and the overall massive parallelism is fully exploited.

\begin{wrapfigure}{r}{0.48\textwidth} 
\centering
\resizebox{\linewidth}{!}{
    \includegraphics{./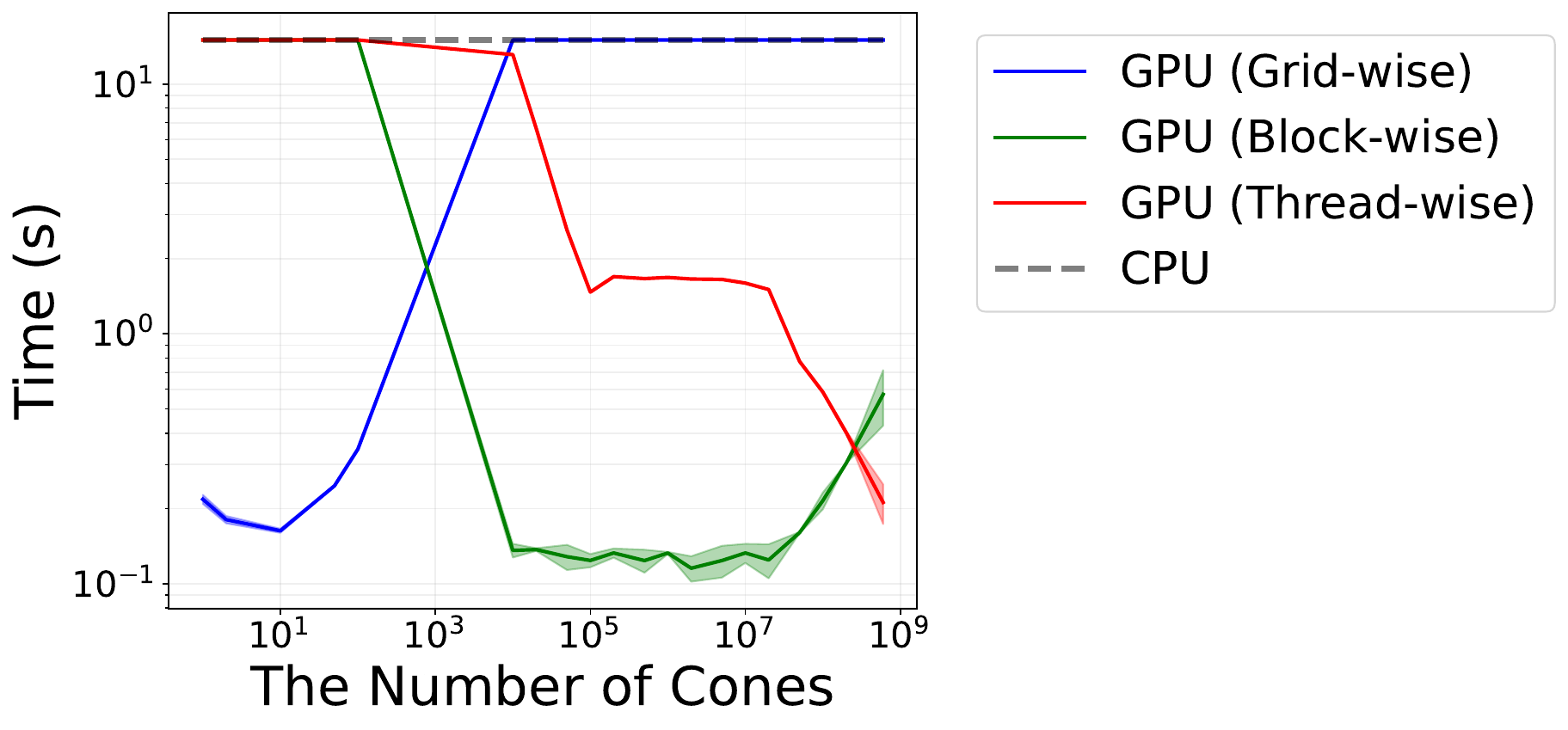}}
\caption{Runtime of projections onto second-order cones using different parallelization strategies.}\label{fig:soc_res} 
\end{wrapfigure}

When $m$ (the number of cones) is very small (i.e., each cone is high-dimensional), the grid-wise strategy performs best. 
As $m$ grows and $d$ becomes moderate, the block-wise strategy becomes more efficient. Ultimately, when $m$ is extremely large (yielding many small cones), the thread-wise approach is the best, since a single thread is sufficient for handling the small cone projection and the large number of threads provide high parallelism. Note that we have a time limit of 15 seconds for the projection task, and both the thread-wise and block-wise strategies exceed the threshold for a small number of high-dimensional cones, whereas the grid-wise strategy exceeds the threshold as the number of cones increases.

Another key factor in resource allocation is the computational cost associated with different cone types. For the zero and nonnegative cones, the projection requires only simple elementwise comparisons (and, for the nonnegative cone, scalar clamping). These inexpensive operations have negligible synchronization or overhead. Projection onto the exponential cone, by solving a root-finding problem, likewise consists primarily of elementwise comparisons and scalar multiplications, making it well-suited for thread-level parallelism. Consequently, for these three cone types, we assign one GPU thread per cone, thereby minimizing synchronization overhead.

\begin{wraptable}{r}{0.50\textwidth}
\vspace{-6pt}
\captionsetup{font={stretch=0.5}}
\caption{Projection strategies for different cone types.}
\label{tab:projection_strategy}
\centering
\footnotesize
\setlength{\tabcolsep}{3pt}
\renewcommand{\arraystretch}{0.95}
\begin{tabular}{lccc}
\toprule
Cone Type & Thread-wise & Block-wise & Grid-wise \\
\midrule
Zero Cone & \textcolor{green}{\ding{51}} & \textcolor{red}{\ding{55}} & \textcolor{red}{\ding{55}} \\
Nonnegative Cone & \textcolor{green}{\ding{51}} & \textcolor{red}{\ding{55}} & \textcolor{red}{\ding{55}} \\
SOC & \textcolor{green}{\ding{51}} & \textcolor{green}{\ding{51}} & \textcolor{green}{\ding{51}} \\
Exponential Cone & \textcolor{green}{\ding{51}} & \textcolor{red}{\ding{55}} & \textcolor{red}{\ding{55}} \\
\bottomrule
\end{tabular}
\end{wraptable}

In contrast, projection onto a second-order cone with diagonal rescaling (Theorem~\ref{thm:soc_proj}) reduces to solving a univariate root-finding problem that involves both elementwise products and a Euclidean norm calculation. While the elementwise products can be handled in parallel, the norm computation necessitates a reduction (i.e., summation), which requires synchronization.  To balance throughput and synchronization costs, we implement reduction at three distinct levels. At the grid level, we directly leverage the cuBLAS library; at the block level, reduction is conducted using shared memory; and at the thread level, we perform serial addition. These three strategies correspond to the methods we previously proposed and are aimed at ensuring efficiency for second-order cone projections under varying scenarios.

Both the block-wise and thread-wise strategies avoid repeated CPU-GPU data transfers and the overhead associated with multiple cuBLAS calls by performing the entire projection operation within a single kernel launch. By selecting the number of blocks (in the block-wise strategy) or threads (in the thread-wise approach) based on available GPU resources, we can execute projections for all cones in a multi-cone structure concurrently. This approach increases hardware utilization, and leads to more efficient projections for large-scale instances. Table~\ref{tab:projection_strategy} summarizes the projection strategies for different cone types.

\begin{wrapfigure}{r}{0.45\textwidth} 
\centering
\resizebox{0.95\linewidth}{!}{
    \includegraphics{./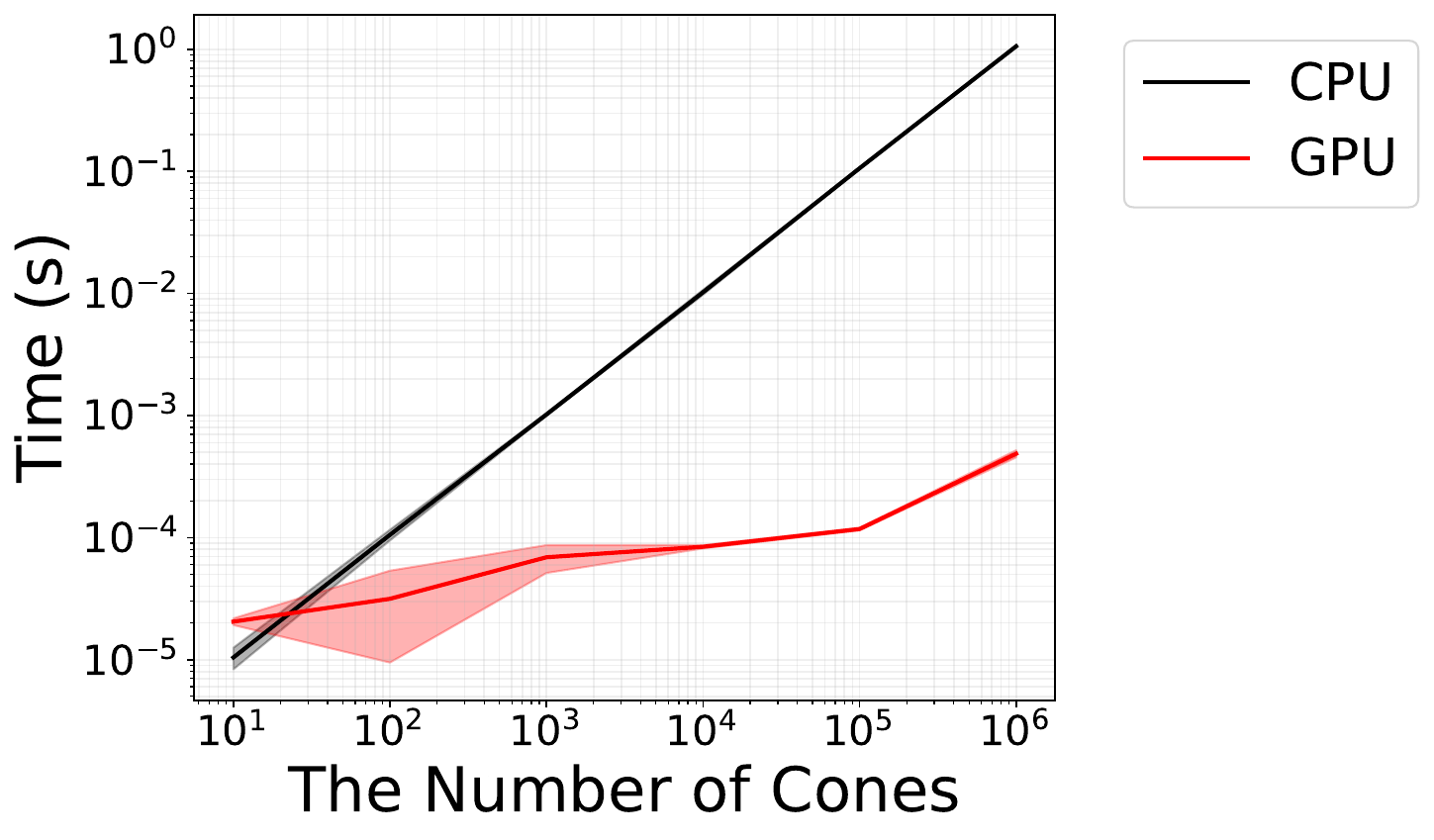}}
\caption{Projections onto exponential cones using different parallelization strategies.\label{fig:exp_res}}  
\end{wrapfigure}

Finally, we examine the effectiveness of these strategies on exponential-cone projections in Figure~\ref{fig:exp_res} by projecting a randomly generated vector to a multi-block exponential cone. We compare the GPU-based, thread-wise strategy to a CPU-based implementation as the number of cones varies. Similar to Figure~\ref{fig:soc_res}, Figure~\ref{fig:exp_res} displays the mean projection time over ten runs, with the shaded region representing the corresponding standard deviation. The results demonstrate that the massive parallelism of the GPU provides significant speedups, and the performance gap between the GPU- and CPU-based implementations widens with increasing numbers of cones.

\section{\label{sec:numerical}Numerical experiments}

We evaluate our method's performance against several state-of-the-art conic solvers:

\begin{itemize}
    \item \textbf{SCS} \citep{o2016conic}, \textbf{ABIP} \citep{deng2024enhanced}: ADMM-based solvers. SCS provides CPU-direct, CPU-indirect, and GPU-indirect variants. Here, "direct" denotes the use of matrix factorization, while "indirect" refers to iterative methods for solving inner linear systems.
    \item \textbf{CuClarabel} \citep{CuClarabel}: An open-source solver implementing an IPM. It uses the cuDSS library to solve the linear systems on the GPU via matrix factorizations.
    \item \textbf{MOSEK} \citep{MOSEK}, \textbf{COPT} \citep{ge2022cardinal}: Commercial IPM-based solvers.
\end{itemize}
The methods that do not require matrix factorization are classified as \textit{matrix-free first-order methods}. They include SCS(indirect) and PDCS (including its GPU implementation cuPDCS). 
The methods that require a single matrix factorization are classified as \textit{non-matrix-free first-order methods}. They include SCS(direct) and ABIP.
The other solvers are essentially based on IPMs. They are CuClarabel, MOSEK and COPT. For certain problem instances, the presolve capabilities of the commercial solvers can detect structural properties, thereby significantly reducing problem complexity and potentially offering a computational advantage. We denote versions of COPT and MOSEK with presolve enabled as COPT* and MOSEK*, respectively, and the versions without presolve as COPT and MOSEK.

We organize the experiments as follows.  Section~\ref{sec:cblib} evaluates performance on the CBLIB, a small-scale classical conic program dataset. 
 In Section~\ref{sec:fisher_market}, we consider the Fisher market equilibrium problem, a conic programming problem involving the exponential cone. Section~\ref{sec:lasso} focuses on solving large-scale Lasso problems. Finally, in Section~\ref{sec:mpo}, we consider multi-period portfolio optimization,  a conic program with multiple second-order cones. The conic program formulations of these problems and supplementary details are provided in Appendix~\ref{appendix:conic_programs}.
All GPU experiments are conducted on an NVIDIA H100 with 80 GB of VRAM, running on a cluster equipped with an Intel Xeon Platinum 8468 CPU. All CPU benchmarks are performed on a Mac mini M2 Pro with 32~GB of RAM.

Because each method adopts different formulations and methodologies, their actual built-in termination criteria can vary even when we set the same tolerance. Moreover, some methods keep their termination criteria private. There is no uniform convergence standard across the methods we evaluated, but we attempt to document and summarize the known termination criteria for these methods in Appendix~\ref{app:solver_criteria}. 
For PDCS and cuPDCS, we assess primal and dual feasibility, as well as primal-dual optimality:
 \begin{equation}\label{eq:PDCS_terminate_tolerance}
    \small
    \begin{array}{ll}
        &\err_{\textrm{p}}(\xbf):={\norm{(\Gbf\xbf-\hbf)-\proj_{\mcal K_{\textrm{d}}^{*}}\{\Gbf\xbf-\hbf\}}_{\infty}}/\brbra{1+\max\left\{\norm{\hbf}_{\infty},\norm{\Gbf \xbf}_{\infty},\norm{\proj_{\mcal K_{\textrm{d}}^{*}}\{\Gbf\xbf-\hbf\}}_{\infty}\right\}},\\
        &\err_{\textrm{d}}(\bm{\lambda}_1,\bm{\lambda}_2):={\max\left\{\norm{\bm{\lambda}_{1}-\proj_{\Lambda}\{\bm{\lambda}_{1}\}}_{\infty},\norm{\bm{\lambda}_{2}-\proj_{\mcal K_{\textrm{p}}^*}\{\bm{\lambda}_{2}\}}_{\infty} \right\}}/\brbra{1+\max\left\{\norm{ \cbf}_{\infty},\norm{\Gbf^\top \ybf}_{\infty}\right\}},\\
        &\err_{\textrm{gap}}(\xbf,\bm{\lambda}_1,\bm{\lambda}_2):={\abs{\inner{\cbf}{\xbf}-\brbra{\ybf^{\top}\hbf+\lbf^{\top}\bm{\lambda}_{1}^{+}-\ubf^{\top}\bm{\lambda}_{1}^{-}}}}/\brbra{1+\max\bcbra{\abs{\inner{\cbf}{\xbf}},\abs{\ybf^{\top}\hbf+\lbf^{\top}\bm{\lambda}_{1}^{+}-\ubf^{\top}\bm{\lambda}_{1}^{-}}}}.
    \end{array}
\end{equation} 
Our method terminates once all three of these criteria fall below a specified tolerance, which is usually either $10^{-3}$ or $10^{-6}$. According to the Karush-Kuhn-Tucker conditions, when these three criteria are all zero, the solution is optimal.

In order to compare the runtime of different methods across multiple instances, we use the shifted geometric mean (SGM): $\text{SGM}(k):=\left[\prod_{i=1}^{N}(t_i + k)\right]^{1/N}-k$
where \(t_i\) is the time (in seconds) for the method to run on problem $i$, and $k$ is a shift parameter that is often set as 10 \citep{sgm}. SGM mitigates the potentially significant influence of tiny runtimes of only a few instances on the overall geometric mean. 
If a method fails to solve a given instance, we assign $t_i$ to be the maximum allowable time (e.g., the time limit).

\subsection{Relaxation of Problems from the CBLIB Dataset\label{sec:cblib}}

CBLIB is a public dataset of mixed-integer conic programming problems collected from real-world applications \citep{friberg2016cblib}. We run our experiments on the instances with LP cones, second-order cones, and exponential cones. 
In line with standard practice for solving such problems, we first use COPT to apply presolve and subsequently relax all integer constraints, thereby producing two refined subsets: (1) 1,943 problems that do not involve exponential cones, and (2) 157 problems that include exponential cones.

For the subset without exponential cones, we categorize problem instances as small-, medium-, and large-scale based on the number of nonzeros in the constraint matrix, using thresholds of 50{,}000 and 500{,}000. As a result, the dataset comprises 1{,}641 small-scale, 220 medium-scale and 82 large-scale problem instances. 
It should be noted that almost all of these problems are relatively small in scale so the IPM-based solvers and the SCS(direct) are unlikely to reach computational bottlenecks and can easily perform the required matrix factorizations. We start with this dataset to give an overview of the performance of PDCS on small-scale problems.
All experiments in this benchmark are executed with a one-hour time limit and a termination tolerance of $10^{-6}$.

    \begin{table}[htbp]

      \centering
      \caption{Conic Program Problems without Exponential Cones\label{tab:socp_res_summary}}
      \resizebox{0.95\textwidth}{!}{
        \begin{tabular}{clrrrrrrrr}
        \toprule
              &       & \multicolumn{2}{c}{Small(1641)} & \multicolumn{2}{c}{Medium(220)} & \multicolumn{2}{c}{Large(82)} & \multicolumn{2}{c}{Total(1943)} \\
    \cmidrule{3-10}          &       & \multicolumn{1}{l}{SGM(10)} & \multicolumn{1}{l}{solved} & \multicolumn{1}{l}{SGM(10)} & \multicolumn{1}{l}{solved} & \multicolumn{1}{l}{SGM(10)} & \multicolumn{1}{l}{solved} & \multicolumn{1}{l}{SGM(10)} & \multicolumn{1}{l}{solved} \\
        \midrule

        \multirow{2.5}[2]{*}{\shortstack{Matrix-free\\first-order methods}}
              & SCS(indirect) & 3.31  & 1638  & 231.68 & 188   & 1856.58 & 46    & 12.77 & 1872 \\
              & PDCS  & 2.78  & 1640& 160.64 & 208   & 1602.14 & 53    & 11.02 & 1901 \\
              & cuPDCS & 2.91  & 1640& \textbf{44.95} & 211   & \textbf{312.65} & 57    & 7.43  & 1908\\

        \hdashline[1pt/2pt]

        \multirow{1.6}[2]{*}{\shortstack{Non-matrix-free\\first-order methods}}
              & ABIP  & 3.66  & 1622  & 1342.77 & 84    & 3458.00  & 2     & 19.04 & 1708 \\
              & SCS(direct) & \textbf{0.59}  & 1636  & 54.98 & 213& 463.21 & 58& \textbf{5.26}  & 1907 \\
 
        \hdashline[3pt/1.5pt]

        \multirow{2.6}[1]{*}{\shortstack{Interior-point methods}}
              & CuClarabel & 0.15  & 1640& \textbf{1.32}  & 219& 46.17 & 61    & 1.05  & 1920 \\
              & COPT* & 0.08  & 1639  & 2.03  & 215   & 53.76 & 58    & 1.12  & 1912 \\
              & MOSEK* & \textbf{0.05}  & 1640& 2.34  & 214   & \textbf{6.21}  & 81& \textbf{0.49}  & 1935\\

        \bottomrule
        \end{tabular}}
    \end{table}%

Tables~\ref{tab:socp_res_summary} and \ref{tab:exp_res_summary} summarize the results for these two categories of problems. Among the first-order methods, SCS(indirect) is the fastest for small-scale problems and the problems with exponential cones. Our cuPDCS is faster for the medium-scale and large-scale problems without exponential cones. Since these problems are still relatively small, interior-point methods all perform far better than first-order methods on these problems. 

Both the CPU-based version of our method (denoted by PDCS) and its GPU-enhanced implementation (denoted by cuPDCS) exhibit strong numerical stability, solving nearly as many instances as the commercial solver COPT.  However, the CPU version occasionally struggles with more challenging instances, primarily due to the limited parallelism in projection operations and matrix-vector multiplications. In contrast, the GPU implementation, cuPDCS, enables more iterations within the allotted time and successfully solves some instances that are otherwise too difficult for the CPU version.

\newcommand{\thickhline}{\noalign{\hrule height 0.7pt}}
\begin{table}[htbp]
    \vspace{-5pt}
    
    \footnotesize
  \centering
  \caption{Conic Program Problems with Exponential Cones~\label{tab:exp_res_summary}}
  \resizebox{0.98\textwidth}{!}{ 
    \begin{tabular}{lrrrr:rrr}
    \thickhline
          & \multicolumn{4}{c:}{First-order methods}      & \multicolumn{3}{c}{Interior-point methods} \\
          & \multicolumn{1}{l}{SCS(direct)} & \multicolumn{1}{l}{SCS(indirect)} & 
          \multicolumn{1}{l}{PDCS} & \multicolumn{1}{l:}{cuPDCS} & \multicolumn{1}{l}{CuClarabel} & \multicolumn{1}{l}{COPT*} & \multicolumn{1}{l}{MOSEK*} \\
    \thickhline
    SGM(10) & \textbf{6.07}  & 19.98 &  18.52 & 12.04 & 0.87  & \textbf{0.13}  & 5.19 \\
    solved & 152   & 152   & 149  & {155}   & 156   & {157}   & 146 \\
    \thickhline
    \end{tabular}}
\end{table}%
\setlength{\arrayrulewidth}{0.4pt}


The computational bottleneck of matrix-free methods lies in the sparse matrix-vector products, so we also compare the number of matrix-vector products required for cuPDCS and SCS(indirect) to solve a particular number of problems. Figure~\ref{fig:solved_count_SGM} reports the fraction of instances solved versus the number of sparse matrix--vector products. 
Notably, when we compare the number of matrix-vector products needed to solve 60\% of the problem instances, we see that cuPDCS requires only about one-fifth to one-tenth as many matrix-vector products as the indirect SCS. Since cuPDCS follows the PDCS algorithm, we can conclude that, even if not implemented on a GPU, PDCS still requires fewer iterations than the other matrix-free solver, SCS(indirect). This advantage is not clearly reflected in the runtime reported in Table \ref{tab:socp_res_summary} because the actual runtime significantly depends on implementation details and programming language. In particular, for small-scale problems, a substantial amount of time is consumed by fixed overheads.

\begin{figure}[htbp]
\begin{centering}
\begin{subfigure}[t]{0.49\columnwidth}%
\includegraphics[width=0.95\textwidth]{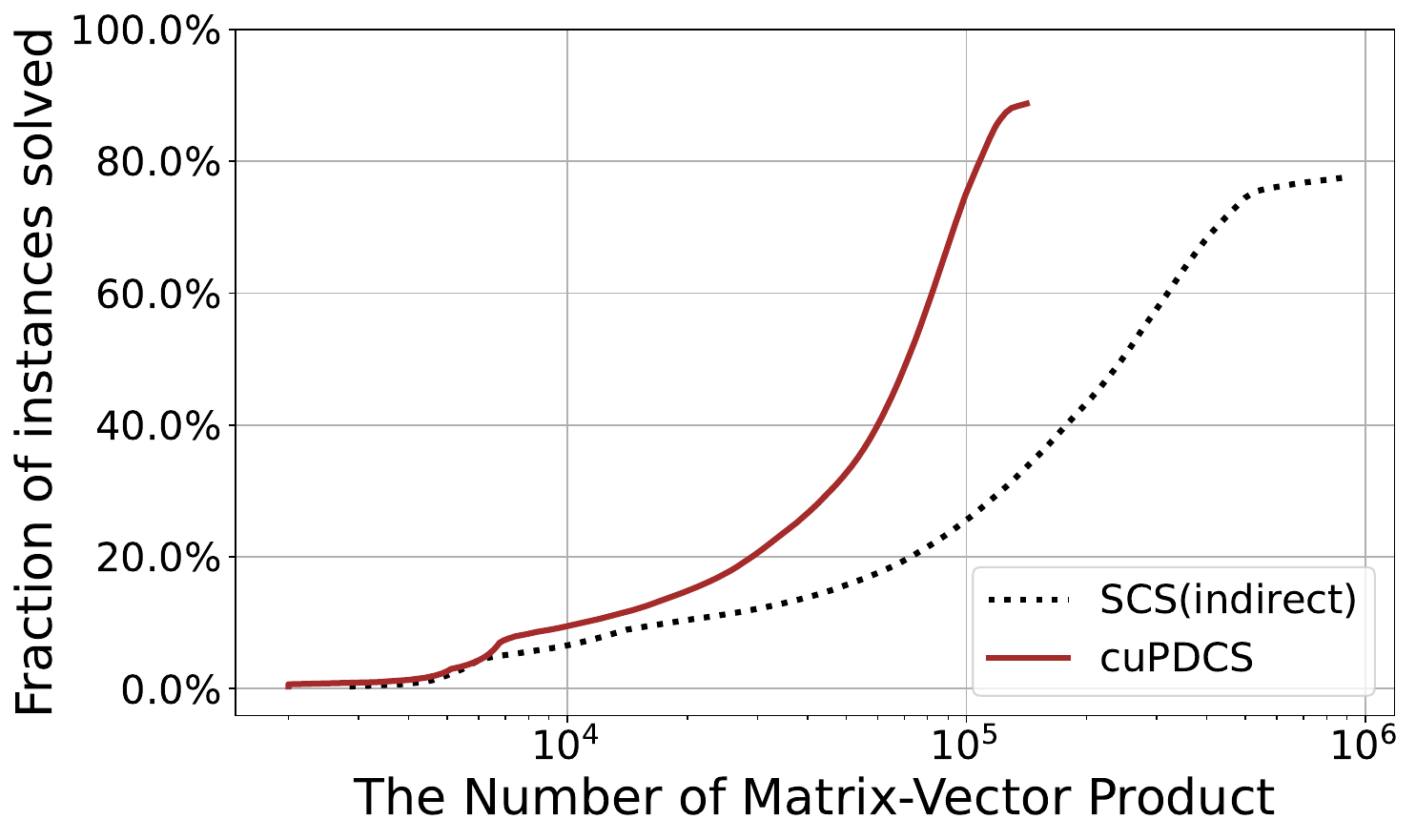}%
\caption{\small  Medium- and large-scale conic program problems without exponential cones}
\end{subfigure}\hfill
\begin{subfigure}[t]{0.49\columnwidth}%
\includegraphics[width=0.95\textwidth]{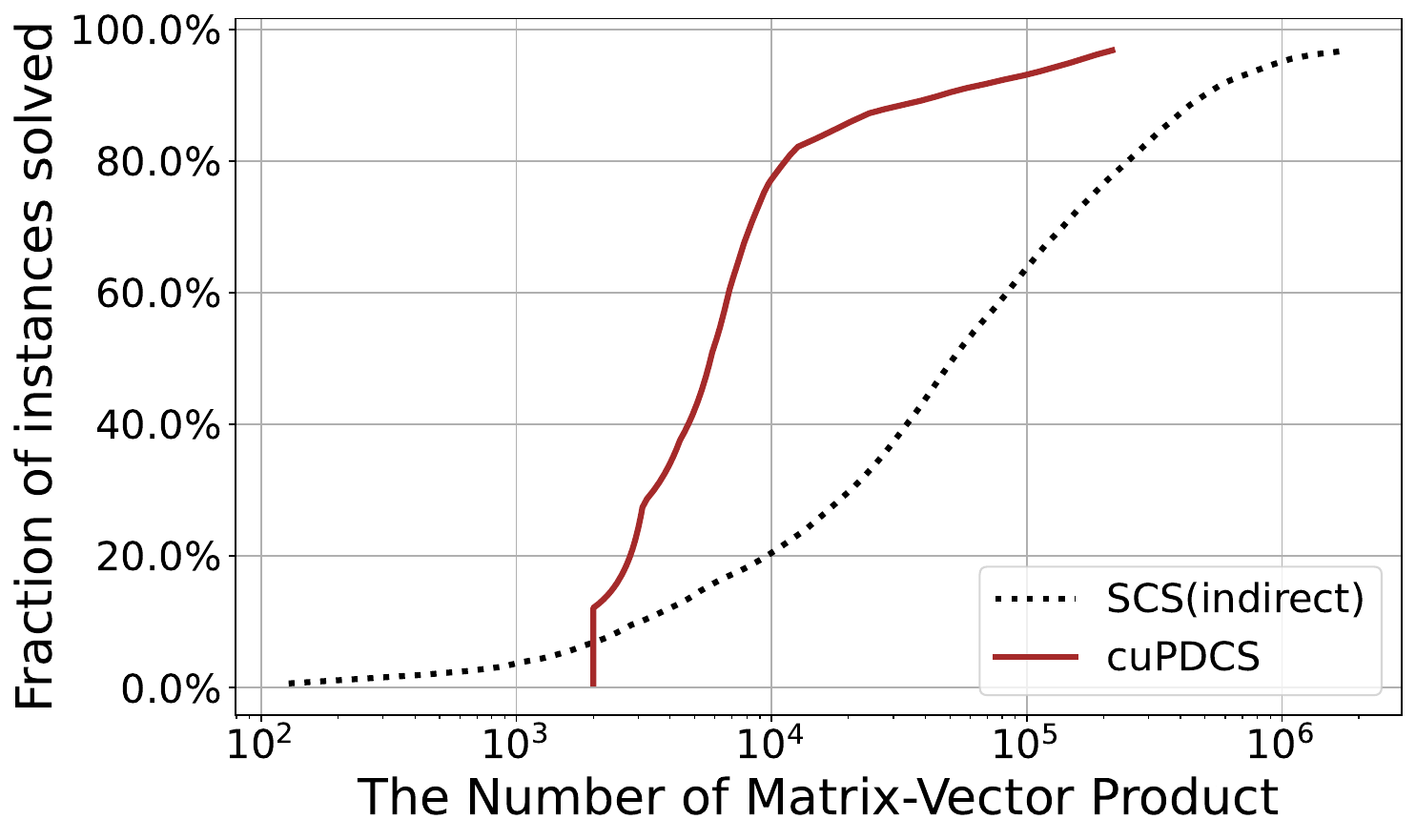}%
\caption{\small  Conic program problems with exponential cones}
\end{subfigure}
\end{centering}
\caption{\label{fig:solved_count_SGM}Performance of cuPDCS and SCS(indirect) in terms of the number of matrix-vector products.}
\end{figure}

Among the interior-point methods, MOSEK* consistently demonstrates advantages over other methods for conic programs without exponential cones. However, for problems involving exponential cones, MOSEK* frequently fails to meet the necessary solution tolerance, resulting in a ``SLOW\_PROGRESS'' status. All such instances are regarded as failures, regardless of whether the time limit is reached. This makes the SGM of MOSEK* for problems with exponential cones appear worse than the other interior-point methods.

It should be emphasized that the problems in the CBLIB dataset are relatively small in scale. For these problems, solving the matrix factorizations and the corresponding linear systems remains accessible. Hence,  the SCS with direct factorizations implemented on CPUs appears to be the best first-order method, while all IPMs significantly outperform first-order methods. However, as the dimension of the problem instance increases, SCS(direct) and all IPMs face greater challenges and become considerably slower.  In the following, we present the experiments on larger-scale problem.

\subsection{Fisher Market Equilibrium Problem~\label{sec:fisher_market}}
We next consider Fisher market equilibrium instances~\citep{eisenberg1959consensus,ye2008path}, an important optimization problem arising in economics.
We formulate it as a standard conic program with exponential cones (see the specific formulation in Appendix \ref{appendix:conic_programs}) and compare various methods (excluding ABIP as it is not applicable) under two levels of solution accuracy: low accuracy ($10^{-3}$) and high accuracy ($10^{-6}$).  From now on, the time limits are set to 2 hours for the low-accuracy tests and 5 hours for the high-accuracy tests.
The results are summarized in Table~\ref{tab:fisher_res_summary}. 
 The first three columns $m$, $n$ and $\text{nnz}$ of Table~\ref{tab:fisher_res_summary} denote the number of rows, the number of columns, and the number of nonzero entries of $\Ubf$, respectively (see~\eqref{eq:arrow_market} in Appendix~\ref{appendix:conic_programs} for more details).

\begin{table}[htbp]
  \centering
      \vspace{-5pt}
    \caption{\label{tab:fisher_res_summary}Experiments on Fisher market equilibrium problems.}
     
     {
  \resizebox{0.98\textwidth}{!}{
    \begin{tabular}{lllrrrrrrr}
    \thickhline
    \multicolumn{1}{c}{\multirow{2}[4]{*}{m}} & \multicolumn{1}{c}{\multirow{2}[4]{*}{n}} & \multicolumn{1}{c}{\multirow{2}[4]{*}{nnz}} & \multicolumn{5}{c}{Without Presolve}  & \multicolumn{2}{c}{With Presolve} \\[-1.5ex]
\multicolumn{3}{c}{} & \multicolumn{5}{c}{\hspace{0em}\rule{0.52\linewidth}{0.5pt}\hspace{0em}} &\multicolumn{2}{c}{\hspace{0em}\rule{0.18\linewidth}{0.5pt}\hspace{0em}} \\
         &       &       & \multicolumn{1}{l}{cuPDCS} & \multicolumn{1}{l}{SCS(GPU)} & \multicolumn{1}{l}{CuClarabel} & \multicolumn{1}{l}{COPT} & \multicolumn{1}{l}{MOSEK} & \multicolumn{1}{l}{COPT*} & \multicolumn{1}{l}{MOSEK*} \\
       \thickhline
    \multicolumn{10}{c}{$10^{-3}$} \\
       \thickhline
    1.0E+02 & 5.0E+03 & 1.0E+05 & 1.5E+01 & f     & 1.0E+01 & 2.6E+00 & \cellcolor[rgb]{ .816,  .808,  .808}1.0E+00 & 2.8E+00 & \textbf{4.2E-01} \\
    1.0E+05 & 1.0E+03 & 2.0E+07 & \cellcolor[rgb]{ .816,  .808,  .808}4.6E+02 & f     & f     & f     & 2.5E+03 & f     & \textbf{1.5E+02} \\
    1.5E+05 & 1.0E+03 & 3.0E+07 & \cellcolor[rgb]{ .816,  .808,  .808}4.3E+02 & f     & f     & f     & f     & f     & \textbf{2.5E+02} \\
    2.0E+05 & 1.0E+03 & 4.0E+07 & \cellcolor[rgb]{ .816,  .808,  .808}\textbf{1.1E+03} & f     & f     & f     & f     & f     & 3.9E+03 \\
    2.5E+05 & 1.0E+03 & 5.0E+07 & \cellcolor[rgb]{ .816,  .808,  .808}\textbf{1.4E+03} & f     & f     & f     & f     & f     & 6.4E+03 \\
    2.8E+05 & 1.0E+03 & 5.5E+07 & \cellcolor[rgb]{ .816,  .808,  .808}\textbf{1.6E+03} & f     & f     & f     & f     & f     & f \\
       \thickhline
    \multicolumn{10}{c}{$10^{-6}$} \\
       \thickhline
    1.0E+02 & 5.0E+03 & 1.0E+05 & 3.7E+01 & 1.2E+04 & 1.2E+01 & 2.8E+00 & \cellcolor[rgb]{ .816,  .808,  .808}1.1E+00 & 3.0E+00 & \textbf{4.2E-01} \\
    1.0E+05 & 1.0E+03 & 2.0E+07 & \cellcolor[rgb]{ .816,  .808,  .808}1.8E+03 & f     & f     & f     & 2.7E+03 & f     & \textbf{1.8E+02} \\
    1.5E+05 & 1.0E+03 & 3.0E+07 & \cellcolor[rgb]{ .816,  .808,  .808}2.8E+03 & f     & f     & f     & f     & f     & \textbf{2.9E+02} \\
    2.0E+05 & 1.0E+03 & 4.0E+07 & \cellcolor[rgb]{ .816,  .808,  .808}4.2E+03 & f     & f     & f     & f     & f     & \textbf{4.0E+03} \\
    2.5E+05 & 1.0E+03 & 5.0E+07 & \cellcolor[rgb]{ .816,  .808,  .808}\textbf{5.7E+03} & f     & f     & f     & f     & f     & 6.5E+03 \\
    2.8E+05 & 1.0E+03 & 5.5E+07 & \cellcolor[rgb]{ .816,  .808,  .808}\textbf{6.2E+03} & f     & f     & f     & f     & f     & 7.7E+03 \\
    \thickhline
    \end{tabular}}
    \\\vspace{2pt}  
    \begin{minipage}{0.9\linewidth}
    \footnotesize\raggedright
        \textit{Note}.  Entries marked ``f'' indicate failure due to exceeding the time limit or returning errors. Bold entries indicate the best runtime among all methods for each row. Shaded cells highlight the best-performing method on the original problem (without presolve). \end{minipage}
  }
\end{table}%

As shown in Table~\ref{tab:fisher_res_summary}, cuPDCS consistently outperforms the other matrix-free GPU-enhanced solver, SCS(GPU). Although both methods utilize GPU, SCS(GPU) is significantly slower. This is primarily because SCS(GPU) is a GPU implementation of SCS(indirect), which typically requires more iterations to converge than cuPDCS. Moreover, SCS(GPU) uses the GPU only to accelerate the iterative method for solving the linear system, while the rest of the algorithm remains implemented on CPU. This design results in substantial data transfer between the CPU and GPU, which incurs significant overhead and slows down the overall performance. Nevertheless, SCS(GPU) is still faster than its two CPU versions, SCS(indirect) and SCS(direct). 

Even when compared to IPMs such as COPT and MOSEK, cuPDCS performs competitively, often outperforming them on problem instances where $\Ubf$ contains more than $2 \times 10^7$ nonzero elements. Although CuClarabel is a GPU-accelerated interior-point solver, its performance is generally closer to that of COPT and MOSEK than to cuPDCS, suggesting that classic IPMs benefit less from GPU than first-order methods.
It is also worth noting that cuPDCS is more sensitive to accuracy requirements than interior-point methods. When the target tolerance tightens from $10^{-3}$ to $10^{-6}$, the runtime of cuPDCS increases significantly, while the runtimes of IPMs remain relatively stable. This indicates that computing high-accuracy solutions is more challenging for cuPDCS.

The presolve function in commercial solvers can significantly affect performance.
Presolve may exploit matrix structure and reduce the cost of matrix factorizations, though the benefits are not always guaranteed.
For MOSEK, presolve leads to improved robustness and faster runtimes. MOSEK* performs substantially better than its non-presolved counterpart. Conversely, for COPT, presolve leads to a slight performance degradation in our experiments. 
When only comparing algorithm performance on the original problems (without presolve), cuPDCS performs the best on all problems but the smallest-scale problem, as indicated by the shaded cells.  

To further illustrate the scalability of cuPDCS, Figure~\ref{fig:fisher_market_time} compares its performance against MOSEK and MOSEK*. Runtimes are plotted against problem size, defined by the non-zero count of $\Ubf$. In the low-accuracy setting, cuPDCS enjoys a substantial speed advantage over MOSEK, and this advantage becomes more pronounced as the problem size increases. Moreover, regardless of the target tolerance, when the number of nonzeros exceeds approximately $4 \times 10^7$, the runtime of MOSEK* grows sharply, whereas cuPDCS maintains better scalability, showcasing the benefits of GPU-accelerated first-order methods for solving large-scale conic programs.

\begin{figure}[htb]
\begin{centering}
\begin{subfigure}[t]{0.49\columnwidth}%
\includegraphics[width=0.9\linewidth]{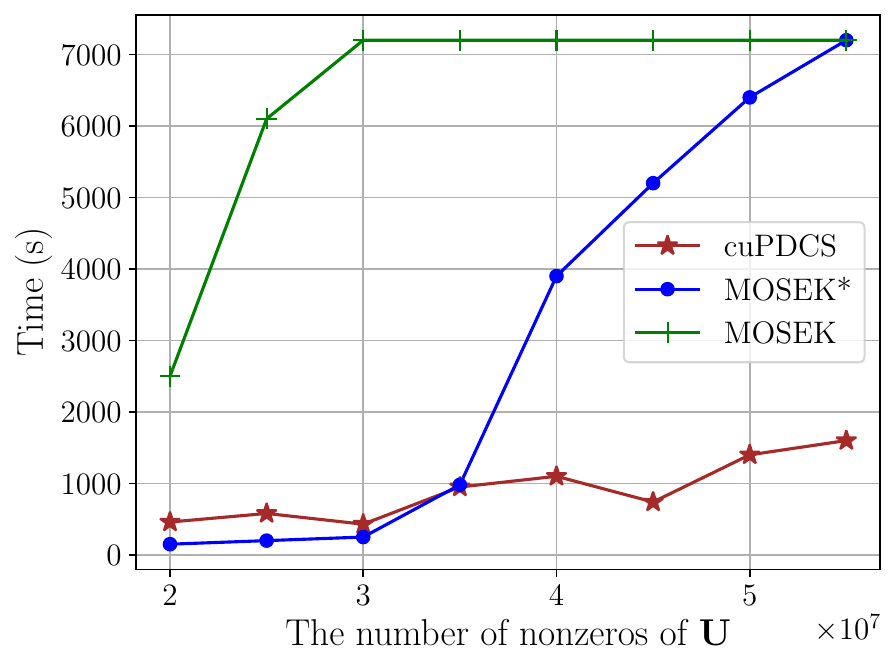}%
\caption{\small  Target tolerance $10^{-3}$.}
\end{subfigure}\hfill
\begin{subfigure}[t]{0.49\columnwidth}%
\includegraphics[width=0.9\linewidth]{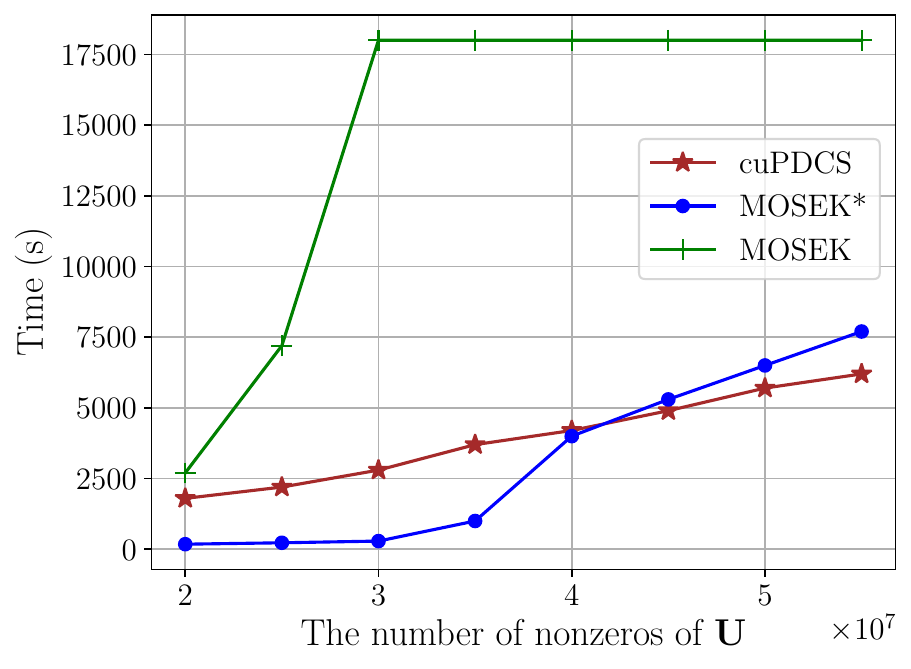}%
\caption{\small  Target tolerance $10^{-6}$ (Time limit: 5 hours).}
\end{subfigure}
\end{centering}
\caption{\label{fig:fisher_market_time}Performance of cuPDCS, MOSEK, and MOSEK* on problems of different scales.}
\end{figure}

\subsection{Lasso Problem} \label{sec:lasso}
The Lasso problem is a classic optimization model in statistical learning that integrates variable selection and regularization to improve both prediction accuracy and model interpretability: 
$\min_{\xbf\in \mbb R^n} \norm{\Abf \xbf - \bbf}^2+\lambda \norm{\xbf}_1,$
where $\Abf$ is the data matrix, $\bbf$ is the response vector, and $\lambda>0$ is the regularization parameter.

We generate a family of synthetic Lasso problem instances following the experimental setting used for ABIP in~\cite{deng2024enhanced}.
By decomposing the decision variable $\xbf$ into its positive and negative parts, the problem can be equivalently reformulated as an SOCP.  We again compare the methods under the two levels of solution accuracy. The results are summarized in Table~\ref{tab:lasso_res_summary}. The first three columns $m$, $n$ and $\text{nnz}$ denote the number of rows, the number of columns, and the number of nonzero entries of $\Abf$. 

\begin{table}[htbp]
  \centering
  \caption{\label{tab:lasso_res_summary}Experiments on Lasso problems.}

    {
    \resizebox{0.98\textwidth}{!}{
    \begin{tabular}{lllrrrrrrrr}
    \thickhline
    \multicolumn{1}{c}{\multirow{2}[4]{*}{m}} & \multicolumn{1}{c}{\multirow{2}[4]{*}{n}} & \multicolumn{1}{c}{\multirow{2}[4]{*}{nnz}} & \multicolumn{6}{c}{Without Presolve}  & \multicolumn{2}{c}{With Presolve} \\[-1.5ex]
\multicolumn{3}{c}{} & \multicolumn{6}{c}{\hspace{0em}\rule{0.50\linewidth}{0.5pt}\hspace{0em}} &\multicolumn{2}{c}{\hspace{0em}\rule{0.15\linewidth}{0.5pt}\hspace{0em}} \\
         &       &       & \multicolumn{1}{c}{cuPDCS} & \multicolumn{1}{c}{SCS(GPU)} &\multicolumn{1}{c}{ABIP} & \multicolumn{1}{c}{CuClarabel} & \multicolumn{1}{c}{COPT} & \multicolumn{1}{c}{MOSEK} & \multicolumn{1}{c}{COPT*} & \multicolumn{1}{c}{MOSEK*} \\
    \midrule
    \multicolumn{11}{c}{$10^{-3}$} \\
    \midrule
    1.0E+04 & 1.0E+05 & 1.0E+05 & \cellcolor[rgb]{ .816,  .808,  .808}\textbf{7.1E-02} & 5.5E+02 & 7.5E+00 & 4.1E+01 & 2.4E+02 & 2.0E+00 & 6.7E+01 & 1.7E+00 \\
    7.0E+04 & 7.0E+05 & 4.9E+06 & \cellcolor[rgb]{ .816,  .808,  .808}\textbf{2.9E-01} & f     & f     & 4.3E+02 & f     & 1.7E+03 & f     & 3.1E+03 \\
    4.0E+05 & 7.0E+06 & 2.8E+08 & \cellcolor[rgb]{ .816,  .808,  .808}\textbf{1.2E+02} & f     & f     & f     & f     & f     & f     & f \\
    7.0E+05 & 7.0E+06 & 4.9E+08 & \cellcolor[rgb]{ .816,  .808,  .808}\textbf{3.2E+02} & f     & f     & f     & f     & f     & f     & f \\
    7.5E+05 & 7.5E+06 & 5.6E+08 & \cellcolor[rgb]{ .816,  .808,  .808}\textbf{4.7E+02} & f     & f     & f     & f     & f     & f     & f \\
    \midrule
    \multicolumn{11}{c}{$10^{-6}$} \\
    \midrule
    1.0E+04 & 1.0E+05 & 1.0E+05 & \cellcolor[rgb]{ .816,  .808,  .808}\textbf{1.1E-01} & 1.1E+03 & 1.6E+01 & 4.3E+01 & 2.5E+02 & 3.6E+00 & 6.7E+01 & 2.1E+00 \\
    7.0E+04 & 7.0E+05 & 4.9E+06 & \cellcolor[rgb]{ .816,  .808,  .808}\textbf{8.4E-01} & f     & f     & f     & f     & 2.5E+03 & f     & 3.5E+03 \\
    4.0E+05 & 7.0E+06 & 2.8E+08 & \cellcolor[rgb]{ .816,  .808,  .808}\textbf{2.6E+02} & f     & f     & f     & f     & f     & f     & f \\
    7.0E+05 & 7.0E+06 & 4.9E+08 & \cellcolor[rgb]{ .816,  .808,  .808}\textbf{5.2E+02} & f     & f     & f     & f     & f     & f     & f \\
    7.5E+05 & 7.5E+06 & 5.6E+08 & \cellcolor[rgb]{ .816,  .808,  .808}\textbf{6.0E+02} & f     & f     & f     & f     & f     & f     & f \\
    \bottomrule
    \end{tabular}
    }\\
    \vspace{2pt}  
   \begin{minipage}{0.9\linewidth}
    \footnotesize\raggedright  
\textit{Note}. See Table~\ref{tab:fisher_res_summary} for definitions of "f", bold entries, and shaded cells.
    \end{minipage}}
\end{table}

It is observed from Table \ref{tab:lasso_res_summary} that, for Lasso problems, cuPDCS consistently achieves the best performance among all tested methods, including both first-order and IPMs, whether or not presolve is applied. To evaluate scalability, we examined instances with matrix dimensions $m = 7.5 \times 10^5$ and $n = 7.5 \times 10^6$. Even at this extreme scale, cuPDCS successfully computed high-accuracy solutions within a practical time frame.
Moreover, cuPDCS shows only a modest increase in runtime when transitioning from a target tolerance of $10^{-3}$ to $10^{-6}$. This near-constant runtime indicates that the underlying PDCS algorithm converges faster (potentially at a linear rate) on Lasso problems compared to Fisher market equilibrium problems. This conjecture is further supported by the empirical convergence behavior illustrated in Figure \ref{fig:convergence_performance} in Section \ref {subsection:comments}.

While presolve continues to benefit IPM-based solvers like COPT and MOSEK, the improvement is less significant than in the Fisher market experiments. In contrast, GPU acceleration offers greater advantages. 
Specifically, CuClarabel demonstrates performance comparable to that of both COPT* and MOSEK*.
However, its applicability is confined to moderate problem sizes, as it struggles to solve very large-scale instances and is slower than commercial solvers on small-scale instances.

\subsection{\label{sec:mpo}Multi-period Portfolio Optimization Problems}

In this section, we study Multi-Period Portfolio Optimization (MPO) problems (see the specific formulation in Appendix \ref{appendix:conic_programs}) , which include a mix of multiple cone types and increasingly higher problem dimensions as the number of time periods grows.

We select all securities from five major indices (S\&P 500, NASDAQ-100, DOW 30, FTSE 100, and Nikkei 225) resulting in 844 assets in total. The historical data used to construct the problem was retrieved using the \texttt{cvxportfolio} package~\citep{boyd2017multi}, which sources prices from Yahoo Finance. The data from the year 2024 were used to estimate the returns, variances, and prices for the first period. For subsequent periods, the predicted data were generated by introducing a controlled level of random noise to the first period's estimates. The parameters are set as \(\gamma_{1\tau} = \big\| \hat{\mathbf{\Sigma}}_{\tau+1}^{1/2} (\wbf_{1/n} - \wbf_b) \big\|\), $\gamma_{2\tau,i} = 0.05$, and $\gamma_{3\tau} = 0.05$, where \(\wbf_{1/n}\) denotes the equally weighted strategy.

\begin{table}[htbp]
  \centering
  \caption{\label{tab:mpo_res}Experiments on MPO problems.} 
     {
    \resizebox{0.9\textwidth}{!}{
  \renewcommand{\arraystretch}{0.9}{
    \begin{tabular}{lrrrrrrrr}
    \toprule
    \multicolumn{1}{c}{\multirow{2}[2]{*}{$T$}} & \multicolumn{6}{c}{Without Presolve}          & \multicolumn{2}{c}{With Presolve} \\[-1.0ex]
    \multicolumn{1}{c}{} & \multicolumn{6}{c}{\hspace{0em}\rule{0.50\linewidth}{0.5pt}\hspace{0em}} &\multicolumn{2}{c}{\hspace{0em}\rule{0.17\linewidth}{0.5pt}\hspace{0em}} \\
          & \multicolumn{1}{l}{cuPDCS} & \multicolumn{1}{l}{SCS(GPU)} & \multicolumn{1}{l}{ABIP} & \multicolumn{1}{l}{CuClarabel} & \multicolumn{1}{l}{COPT} & \multicolumn{1}{l}{MOSEK} & \multicolumn{1}{l}{COPT*} & \multicolumn{1}{l}{MOSEK*} \\
    \midrule
    \multicolumn{9}{c}{$10^{-3}$} \\
    \midrule
    3     & 1.9E+00 & 3.7E+00 & 1.9E+00 & 1.5E+00 & 8.3E-01 & \cellcolor[rgb]{ .816,  .808,  .808}8.1E-01 & 8.6E-01 & \textbf{4.0E-02} \\
    48    & \cellcolor[rgb]{ .816,  .808,  .808}7.2E+00 & 3.5E+02 & 4.0E+01 & 9.9E+01 & 8.8E+00 & 1.7E+01 & 9.3E+00 & \textbf{8.6E-01} \\
    96    & \cellcolor[rgb]{ .816,  .808,  .808}1.1E+01 & 1.8E+03 & 9.1E+01 & 6.3E+02 & 1.9E+01 & 3.3E+01 & 2.0E+01 & \textbf{1.7E+00} \\
    360   & \cellcolor[rgb]{ .816,  .808,  .808}5.1E+01 & f     & 1.9E+02 & f     & 1.2E+02 & 1.2E+02 & 1.2E+02 & \textbf{7.2E+00} \\
    1440  & \cellcolor[rgb]{ .816,  .808,  .808}4.9E+02 & f     & 9.0E+02 & f     & f     & f     & f     & \textbf{4.8E+01} \\
    2160  & \cellcolor[rgb]{ .816,  .808,  .808}\textbf{5.8E+02} & f     & f     & f     & f     & f     & f     & f \\
    3600  & \cellcolor[rgb]{ .816,  .808,  .808}\textbf{1.1E+03} & f     & f     & f     & f     & f     & f     & f \\
    \midrule
    \multicolumn{9}{c}{$10^{-6}$} \\
    \midrule
    3     & 7.5E+00 & f     & f     & 1.6E+00 & \cellcolor[rgb]{ .816,  .808,  .808}8.3E-01 & 1.3E+00 & 8.4E-01 & \textbf{5.0E-02} \\
    48    & 1.8E+01 & f     & f     & 1.1E+02 & \cellcolor[rgb]{ .816,  .808,  .808}8.8E+00 & 2.7E+01 & 8.8E+00 & \textbf{9.9E-01} \\
    96    & 5.7E+01 & f     & f     & 6.1E+02 & \cellcolor[rgb]{ .816,  .808,  .808}1.9E+01 & 6.0E+01 & 1.9E+01 & \textbf{1.9E+00} \\
    360   & 4.2E+02 & f     & f     & 9.2E+03 & \cellcolor[rgb]{ .816,  .808,  .808}1.2E+02 & 2.4E+02 & 1.1E+02 & \textbf{8.2E+00} \\
    1440  & \cellcolor[rgb]{ .816,  .808,  .808}3.4E+03 & f     & f     & f     & f     & f     & f     & \textbf{5.6E+01} \\
    2160  & \cellcolor[rgb]{ .816,  .808,  .808}\textbf{1.0E+03} & f     & f     & f     & f     & f     & f     & f \\
    3600  & \cellcolor[rgb]{ .816,  .808,  .808}\textbf{9.0E+03} & f     & f     & f     & f     & f     & f     & f \\
    \bottomrule
    \end{tabular}}
     }\\
     \vspace{2pt}  
   \begin{minipage}{0.9\linewidth}
    \footnotesize\raggedright  
    \textit{Note}. See Table~\ref{tab:fisher_res_summary} for definitions of "f", bold entries, and shaded cells.
    \end{minipage}
}
\end{table}%
We again evaluate performance on MPO problems under two levels of solution accuracy: low accuracy ($10^{-3}$) and high accuracy ($10^{-6}$). Results are presented in Table~\ref{tab:mpo_res}. As the number of time periods T increases, the number of constraints, variables, and cones grows proportionally.

From Table~\ref{tab:mpo_res}, we observe that cuPDCS consistently outperforms the other first-order methods, SCS(GPU) and ABIP. Notably, ABIP is available only as a CPU implementation and still relies on at least one matrix factorization per iteration, making it less suitable for large-scale problems. Among the methods that work directly on the original problem formulation (i.e., without presolve), cuPDCS demonstrates a clear advantage on large-scale instances.

However, as the desired accuracy increases, the runtime of cuPDCS grows more rapidly than that of IPMs. Presolve can significantly enhance MOSEK's performance, making it more robust and quicker on moderate-sized instances. Yet, even MOSEK* struggles to handle large-scale MPO problems due to its considerable memory requirements. For cases with $T \geq 2160$, MOSEK* runs out of memory, whereas cuPDCS continues to perform effectively. These results show the scalability and robustness of cuPDCS for solving large-scale conic optimization problems involving mixed cone types.

In summary, cuPDCS demonstrates consistently strong performance across a wide range of large-scale conic optimization problems. It outperforms other first-order methods, such as SCS and ABIP, particularly on high-dimensional problems, where GPU acceleration is more effective. Moreover, even in comparison with commercial interior-point method solvers like MOSEK and COPT (both with and without presolve), cuPDCS remains highly competitive. Its advantages are especially pronounced in large-scale, lower-accuracy settings. It would be interesting to study the presolve for first-order methods like cuPDCS, as it could potentially lead to another significant speedup, similar to the presolve for MOSEK.

\bibliographystyle{plain}
\bibliography{PDCS_arxiv}

\clearpage
\appendix

\section*{\LARGE Appendices}
\section{Projection onto rescaled second-order cone and exponential cone\label{app:proj}}

In this section, we provide the proofs of Theorems \ref{thm:soc_proj} and \ref{thm:exp_proj_simplified}, about the projection onto the rescaled second-order cone $\Dbf \mcal K_{\soc}^{n+1}$ and the rescaled exponential cone $\Dbf \mcal K_{\exp}$.
\subsection{Projection onto rescaled second-order cone\label{app:proj:soc}}

\begin{proof}[Proof of Theorem~\ref{thm:soc_proj}]

    Let $\Dbf$ be the diagonal rescaling matrix, the projection onto $\Dbf \mcal K_{\soc}^{n+1}$ is the solution of the following problem:   
    \begin{equation}\label{eq:proj_diagonal_rescaling}
        \begin{aligned}\proj_{\Dbf \mcal K_{\soc}^{n+1}}\bcbra{(t,\xbf)}=\argmin_{(s,\ybf)\in \mbb R^{n+1}}\frac{1}{2}\norm{(t,\xbf)-(s,\ybf)}^{2},\ \ \st\norm{\hat{\Dbf}^{-1}\ybf}^2\leq s^2, \ s\ge 0\ ,
        \end{aligned}
        \end{equation}
    where $(t,\xbf)\in \mbb R^{n+1}$,  $\hat{\Dbf}=\diag(\hat{\dbf}_2,\cdots,\hat{\dbf}_{n+1})$ with $\hat{\dbf}_i=\dbf_i / \dbf_1$ and $\dbf_i$ is the $i$-th entry in the diagonal of $\Dbf$.

    We first consider the simplest case: when $(s,\ybf)$ is the degenerate solution $(0,\zerobf)$. It should be noted that the projection onto the convex cone $\Dbf \mcal K_{\soc}^{n+1}$ is $(0,\zerobf)$ if and only if $-(t,\xbf)$ lies in $(\Dbf \mcal K_{\soc}^{n+1})^*$. Because $(\Dbf \mcal K_{\soc}^{n+1})^* = \Dbf^{-1}\mcal K_{\soc}^{n+1}$, it implies that when $\|\hat{\Dbf}\xbf\| \le -t$, the projection $(s,\ybf)$ is $(0,\zerobf)$. This proves the first case.

    Now we introduce the multipliers $\lambda\ge0$ and $\mu\ge0$ for the two constraints of  \eqref{eq:proj_diagonal_rescaling}, then the Lagrangian is $\mathcal L(\ybf,s;\lambda,\mu)    =\tfrac12\|\ybf - \xbf\|^2 + \tfrac12(s - t)^2
    +\lambda\bigl(\|\,\hat \Dbf^{-1}\ybf\|^2 - s^2\bigr)
    +\mu\,( -s )$. Thus, the KKT conditions of \eqref{eq:proj_diagonal_rescaling} are as follows:
    \begin{equation}\label{eq:kkt_socp_1_2_3}
    \begin{aligned}
                &\|\hat{\Dbf}^{-1}\ybf\|^2 - s^2 \le 0, 
        \ybf - \xbf + 2\lambda \hat{\Dbf}^{-2} \ybf = 0,  (s - t) - 2\lambda s - \mu = 0,\\
        &\lambda \left( \|\hat{\Dbf}^{-1}\ybf\|^2 - s^2 \right) = 0, \mu s = 0,s \ge 0,  \lambda \ge 0, \mu \ge 0,
    \end{aligned}
    \end{equation}
    It should be noted that Slater's condition holds for \eqref{eq:proj_diagonal_rescaling} as there exist strictly feasible solutions, so any nonzero $(s,\ybf)$ satisfying \eqref{eq:kkt_socp_1_2_3} is an optimal solution pair for \eqref{eq:proj_diagonal_rescaling}.
    
    If $(t,\xbf)$ is already in the rescaled cone, i.e., $\|\hat{\Dbf}^{-1}\xbf\|^2 \le t^2 $, then $(s,\ybf) = (t,\xbf)$ is an optimal solution for  \eqref{eq:proj_diagonal_rescaling} because it is feasible and has the smallest possible objective value $0$. This proves the second case.
    
    If $\|\hat{\Dbf}^{-1}\xbf\|^2 > t^2 $, since $(t,\xbf)$ is not in the rescaled cone, the projected solution $(s,\ybf)$ must lie in the boundary of the cone. In other words, $\|\hat{\Dbf}^{-1}\ybf\| = s$. If $t = 0$, then the optimality conditions \eqref{eq:kkt_socp_1_2_3} become 
    \begin{equation}\label{eq:kkt_sys_fixed_1} 
        (\ybf-\xbf)+2\lambda\hat{\Dbf}^{-2}\ybf=0\ , \ 
        s-2\lambda s=\mu\ , \ 
        s\mu = 0 \ , \
        \norm{\hat{\Dbf}^{-1}\ybf}^{2}=s^{2}, \ 
        s \ge 0\  , \ 
        \lambda \ge 0\ , \  
        \mu \ge 0 \ .
        \end{equation}
    \begin{equation}\label{eq:kkt_sys_fixed_1_0} 
        \ybf = (\Ibf_{n\times n} + \hat{\Dbf}^{-2})^{-1}\xbf\ ,\  s = \left\|(\hat{\Dbf} + \hat{\Dbf}^{-1})^{-1}\xbf\right\| \ ,\  \lambda = \frac{1}{2}\ ,\  \mu = 0 
        \end{equation}
    satisfy the above system. Therefore, when $t = 0$ and $\left\|\hat{\Dbf}^{-1}\xbf\right\| > 0$, \eqref{eq:kkt_sys_fixed_1_0}  gives the solution $s$ and $\ybf$ for \eqref{eq:proj_diagonal_rescaling}. This proves the third case.
    
    If $t > 0$, let $\mu = 0$ and then the optimality conditions \eqref{eq:kkt_socp_1_2_3} become   
    \begin{equation}\label{eq:kkt_sys_fixed_2} 
    (\ybf-\xbf)+2\lambda\hat{\Dbf}^{-2}\ybf=0\ , \ 
    (1-2\lambda) s=t\ , \ 
    \norm{\hat{\Dbf}^{-1}\ybf}^{2}=s^{2}\ , \ 
    s \ge 0 , \ 
    \lambda \ge 0 \ .
    \end{equation}
    If there exist $\ybf, \ s$ and $\lambda \in (0, \frac{1}{2})$ satisfying \eqref{eq:kkt_sys_fixed_2}, then $\ybf$ and $s$ can be computed by the first and second equations:
    \begin{equation}\label{eq:kkt_sys_fixed_2_0} 
    \ybf=(\Ibf+2\lambda\hat{\Dbf}^{-2})^{-1}\xbf \ ,\quad
    s=(1-2\lambda)^{-1}t\geq0\\
    \end{equation}
    Substituting them into the third equation of \eqref{eq:kkt_sys_fixed_2} yields:
    \begin{equation}\label{eq:kkt_sys_fixed_3} 
        f(\lambda) := \|(\hat{\Dbf}+2\lambda\hat{\Dbf}^{-1})^{-1}\xbf\|^2  -(1-2\lambda)^{-2}t^{2} = 0\ .
        \end{equation} 
    Note that there must exist a root $\lambda \in (0,\frac{1}{2})$ of the above equation $f(\lambda) = 0$ because $f$ is continuous on $(0,\frac{1}{2})$, while $\lim_{\lambda\to 0+} f(\lambda) > 0$ (since $\|\hat{\Dbf}^{-1}\xbf\|^2 > t^2 $) and  $\lim_{\lambda\to \frac{1}{2}} f(\lambda) < 0$ (since $t >0$). Therefore, let $\lambda$ be a root of \eqref{eq:kkt_sys_fixed_3} and then \eqref{eq:kkt_sys_fixed_2_0}  gives the solution $\ybf$ and $s$.
    
    Similarly, if $t <0$ and $\|\hat{\Dbf}\xbf\| > -t$ (the case $\|\hat{\Dbf}\xbf\| \le -t$ has been discussed in the beginning), then there exists $\lambda > \frac{1}{2}$ that is a root of $f(\lambda) = 0$. The existence of such a root is due to $\|\hat{\Dbf}\xbf\| > -t$.
    Then \eqref{eq:kkt_sys_fixed_2_0}  gives the solution $\ybf$ and $s$ based on the root $\lambda$. Now the fourth case is proven.\hfill
    \end{proof}

\subsection{Projection onto rescaled exponential cone\label{app:proj:exp}} 
To compute the projection of $\vbf_0$ onto the diagonally rescaled exponential cone $\Dbf\mcal K_{\exp}$ and its dual cone $\Dbf^{-1}\mcal K_{\exp}^*$, we first recall the definitions of $\mcal K_{\exp}$ and $\mcal K_{\exp}^*$:
\begin{equation}\label{eq:exp_cone}
\mcal K_{\exp}=\bcbra{(r,s,t)\in\mbb R^{3}\mid s > 0,\ t \ge s\cdot\exp\left(\frac{r}{s}\right)}\bigcup \bcbra{(r,s,t)\in\mbb R^{3}\mid s=0,\ t\geq0,\ r\leq0}  
\end{equation}
\begin{equation}\label{eq:exp_cone_dual}
\mcal K_{\exp}^{*}= \bcbra{(r,s,t)\in\mbb R^{3}\mid r <0, \ e\cdot t \ge -r \cdot \exp\left(\frac{s}{r}\right)} \bigcup \bcbra{(r,s,t)\in\mbb R^{3}\mid r=0,\ s\ge 0,\ t\geq0} 
\end{equation}
and then we present the formal statement and proof of Theorem~\ref{thm:exp_proj_simplified}. 
To compute the projection onto the rescaled cone, we need to use the following Moreau's decomposition theorem \citep{moreau1962decomposition}:
\begin{lemma}\label{lem:moreau_decomp}
Let $\mcal K$ be a closed convex cone in $\mbb R^n$ and $\mcal K^*$ be its dual cone. For any $\vbf_p$ and $\vbf_d \in \mbb R^n$, the following statements are equivalent. (a) $\vbf_p \in\mathcal{K}$,\, $\vbf_d \in-\mathcal{K}^*$ and $\langle \vbf_p,\vbf_d \rangle=0$. (b) $\vbf_p=\proj_{\mathcal{K}}(\vbf_p+\vbf_d)$ and $\vbf_d=\proj_{-\mathcal{K}^*}(\vbf_p+\vbf_d)$. 
\end{lemma}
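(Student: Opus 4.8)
The plan is to reduce both directions of the equivalence to a single standard ingredient: the variational characterization of the Euclidean projection onto a closed convex cone. The first step I would carry out is to record and briefly prove the following fact. For a nonempty closed convex cone $\mcal C\subseteq\mbb R^n$ and any $\xbf\in\mbb R^n$, a vector $\pbf$ satisfies $\pbf=\proj_{\mcal C}(\xbf)$ if and only if
\[
\pbf\in\mcal C,\qquad \langle\xbf-\pbf,\pbf\rangle=0,\qquad \langle\xbf-\pbf,\zbf\rangle\le 0\ \text{ for all }\zbf\in\mcal C.
\]
This follows from the obtuse-angle inequality $\langle\xbf-\pbf,\zbf-\pbf\rangle\le 0$ ($\forall\,\zbf\in\mcal C$) that characterizes projections onto closed convex sets: since $\mcal C$ is a cone, substituting $\zbf=2\pbf$ and $\zbf=\zerobf$ (both in $\mcal C$) forces $\langle\xbf-\pbf,\pbf\rangle=0$, after which the inequality collapses to $\langle\xbf-\pbf,\zbf\rangle\le 0$ for all $\zbf\in\mcal C$; the converse is immediate. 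By the definition of the dual cone, this last condition is the same as $-(\xbf-\pbf)\in\mcal C^{*}$, i.e. $\xbf-\pbf\in-\mcal C^{*}$.

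Granting this, the implication (a)$\Rightarrow$(b) is a term-by-term verification. Writing $\vbf:=\vbf_p+\vbf_d$, I would check the three conditions for $\vbf_p=\proj_{\mcal K}(\vbf)$: membership $\vbf_p\in\mcal K$ is part of (a); $\langle\vbf-\vbf_p,\vbf_p\rangle=\langle\vbf_d,\vbf_p\rangle=0$ is part of (a); and $\langle\vbf-\vbf_p,\zbf\rangle=\langle\vbf_d,\zbf\rangle\le 0$ for every $\zbf\in\mcal K$ because $-\vbf_d\in\mcal K^{*}$. The claim $\vbf_d=\proj_{-\mcal K^{*}}(\vbf)$ is symmetric: apply the same characterization with $\mcal C=-\mcal K^{*}$ and $\pbf=\vbf_d$; membership $\vbf_d\in-\mcal K^{*}$ is part of (a); $\langle\vbf-\vbf_d,\vbf_d\rangle=\langle\vbf_p,\vbf_d\rangle=0$; and for any $\wbf\in-\mcal K^{*}$ we have $-\wbf\in\mcal K^{*}$ and $\vbf_p\in\mcal K$, hence $\langle\vbf-\vbf_d,\wbf\rangle=\langle\vbf_p,\wbf\rangle\le 0$.

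For (b)$\Rightarrow$(a) it is enough to use the first equation of (b). From $\vbf_p=\proj_{\mcal K}(\vbf_p+\vbf_d)$ and the characterization above (with $\xbf=\vbf_p+\vbf_d$ and $\mcal C=\mcal K$) one reads off precisely the three assertions of (a): $\vbf_p\in\mcal K$; $\langle\vbf_d,\vbf_p\rangle=\langle(\vbf_p+\vbf_d)-\vbf_p,\vbf_p\rangle=0$; and $\langle\vbf_d,\zbf\rangle\le 0$ for all $\zbf\in\mcal K$, which by the definition of the dual cone means $-\vbf_d\in\mcal K^{*}$, i.e. $\vbf_d\in-\mcal K^{*}$. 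The second equation of (b) is then automatically consistent via the direction just proved.

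The only substantive step is the first one — establishing the conic refinement of the projection characterization, especially the orthogonality relation $\langle\xbf-\pbf,\pbf\rangle=0$, which genuinely uses the cone structure ($\zerobf\in\mcal C$ together with scalability) and the closedness and convexity of $\mcal C$ that make the projection single-valued. After that, the argument is pure bookkeeping: matching the definition of $\mcal K^{*}$ against the sign inequality and substituting $\vbf=\vbf_p+\vbf_d$; no case analysis is required.
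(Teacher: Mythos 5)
Your proof is correct. Note that the paper does not actually prove this lemma --- it is stated as Moreau's classical decomposition theorem and cited to Moreau (1962) --- so there is no in-paper argument to compare against. Your route is the standard one: derive the conic refinement of the obtuse-angle characterization of $\proj_{\mcal C}$ (membership, orthogonality $\langle\xbf-\pbf,\pbf\rangle=0$ via the substitutions $\zbf=2\pbf$ and $\zbf=\zerobf$, and the sign condition $\langle\xbf-\pbf,\zbf\rangle\le 0$ equivalent to $\xbf-\pbf\in-\mcal C^{*}$), then read off both implications by bookkeeping. All steps check out, including the observation that (b)$\Rightarrow$(a) needs only the first projection identity; the argument is complete and self-contained.
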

Therefore, the projection problem essentially reduces to computing $\vbf_p$ and $\vbf_{d}$ such that
\begin{equation}\label{eq:diagonal_moreau}
\vbf_{0}=\vbf_{\textrm{p}}+\vbf_{\textrm{d}}\ ,\ \vbf_{\textrm{p}}\in \Dbf\mcal K_{\exp}\ ,\ \vbf_{\textrm{d}}\in -\Dbf^{-1}\mcal K_{\exp}^{*}\ ,\ \langle \vbf_{\textrm{p}},\vbf_{\textrm{d}} \rangle=0\ .
\end{equation}
Since the projection of $\vbf_0$ onto the rescaled cone is unique, the corresponding $\vbf_p$ and $\vbf_d$ satisfying \eqref{eq:diagonal_moreau} are also unique. Now, we are ready to present the formal statement of Theorem~\ref{thm:exp_proj_simplified}:

\begin{theorem}\label{thm:Moreau_sys_exp}
For any $\vbf_{0}=(r_{0},s_{0},t_{0})\in\mbb R^{3}$, the corresponding $\vbf_p$ and $\vbf_d$ that satisfy \eqref{eq:diagonal_moreau} can be computed as follows:
\begin{enumerate}
\item \label{enu:item1}If $\vbf_{0}\in\Dbf\mcal K_{\exp}$, then $\vbfp=\vbf_{0}$
and $\vbfd=\zerobf_{3\times1}$.
\item \label{enu:item2}If $\vbf_{0}\in-\Dbf^{-1}\mcal K_{\exp}^{*}$, then
$\vbfp=\zerobf_{3\times1}$ and $\vbfd=\vbf_{0}$.
\item \label{enu:item3}If $r_{0}\leq0$ and $s_{0}\leq0$, then $\vbfp=\brbra{r_{0},0,t_{0}^{+}}$
and $\vbfd=\brbra{0,s_{0},-t_{0}^{-}}$. 
\item \label{enu:item4}Otherwise,  
solve the following root-finding problem:
\begin{equation}
h\brbra{\rho}:=d_{t}\exp(\rho)s_{\textrm{p}}(\rho)-d_{t}^{-1}\exp(-\rho)r_{\textrm{d}}\brbra{\rho}-t_{0}=0\ ,\label{eq:h_rho}
\end{equation}
\begin{equation}
s_{\textrm{p}}\brbra{\rho}=\frac{s_{0}\dr^{-2}\ds-\dr^{-1}r_{0}(1-\rho)}{\ds^{2}\dr^{-2}+\rho(\rho-1)}\ \text{and}\ 
r_{\textrm{d}}(\rho)=
\frac{\ds^{2}\dr^{-1}\left(r_{0}-\ds^{-1}\dr\rho s_{0}\right)}
{\ds^{2}\dr^{-2}+\rho(\rho-1)}.
\label{eq:sp_rd}
\end{equation} 
for $\rho$ in interval $(l_{\rho},u_{\rho})$, where  
\begin{subequations}\label{eq:lrho_urho}
\begin{numcases}{(l_\rho,u_\rho)=}
  (\min\{a_{3},a_{4}\},\max\{a_{3},a_{4}\}), & $r_{0}>0,\ s_{0}>0$ \label{eq:lrho_urho:minmax} \\
  (-\infty,a_{3}), & $r_{0}\leq 0,\ s_{0}>0$ \label{eq:lrho_urho:a3} \\
  (a_{4},+\infty), & $r_{0}>0,\ s_{0}\leq 0$ \label{eq:lrho_urho:a4}
\end{numcases}
\end{subequations}
and 
\begin{equation}
a_{3}=\frac{r_{0}\ds}{s_{0}\dr}\ ,\ a_{4}=1-\frac{s_{0}\ds}{r_{0}\dr}\ .
\end{equation}
 Then the solution $\brbra{\vbfp,\vbfd}$ to system~\eqref{eq:diagonal_moreau}
is given by
\begin{equation}\label{eq:vpvd_defn}
\vbf_{\textrm{p}}=\brbra{\dr\rho,\ds,\dt\exp(\rho)}s_{\textrm{p}}\brbra{\rho}\ \text{and}\ \vbf_{\textrm{d}}=\brbra{d_{r}^{-1},(1-\rho)d_{s}^{-1},-d_{t}^{-1}\exp(-\rho)}r_{\textrm{d}}\brbra{\rho}\ .
\end{equation}
\end{enumerate}
\end{theorem}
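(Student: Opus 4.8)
The plan is to invoke Lemma~\ref{lem:moreau_decomp}: since $(\Dbf\mcal K_{\exp})^*=\Dbf^{-1}\mcal K_{\exp}^*$, the projection $\proj_{\Dbf\mcal K_{\exp}}(\vbf_0)$ is the unique $\vbf_p$ for which there is a $\vbf_d$ with $\vbf_0=\vbf_p+\vbf_d$, $\vbf_p\in\Dbf\mcal K_{\exp}$, $\vbf_d\in-\Dbf^{-1}\mcal K_{\exp}^*$, and $\langle\vbf_p,\vbf_d\rangle=0$, i.e.\ system~\eqref{eq:diagonal_moreau}. Hence in each case it suffices to exhibit one pair $(\vbf_p,\vbf_d)$ solving \eqref{eq:diagonal_moreau}; uniqueness of the Euclidean projection then forces it to equal the stated one. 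Cases~\ref{enu:item1} and \ref{enu:item2} are immediate from $(\vbf_0,\zerobf)$ and $(\zerobf,\vbf_0)$. For Case~\ref{enu:item3} I would verify the four conditions directly for $\vbf_p=(r_0,0,t_0^+)$, $\vbf_d=(0,s_0,-t_0^-)$: summation and $\langle\vbf_p,\vbf_d\rangle=-t_0^+t_0^-=0$ are trivial, $\Dbf^{-1}\vbf_p=(r_0/d_r,0,t_0^+/d_t)$ lies in the flat branch $\{s=0,\ t\ge0,\ r\le0\}$ of $\mcal K_{\exp}$ because $r_0\le0$ and $d_r,d_t>0$, and $-\Dbf\vbf_d=(0,-d_ss_0,d_tt_0^-)$ lies in the flat branch $\{r=0,\ s\ge0,\ t\ge0\}$ of $\mcal K_{\exp}^*$ because $s_0\le0$.

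Case~\ref{enu:item4} is the heart of the proof. Here Cases~\ref{enu:item1}--\ref{enu:item3} all fail, and I would first show that any solution of \eqref{eq:diagonal_moreau} has $\vbf_p\neq\zerobf$ and $\vbf_d\neq\zerobf$ (else $\vbf_0$ lands in Case~\ref{enu:item1} or \ref{enu:item2}) and that $\vbf_p$ lies in the relative interior of the curved boundary of $\Dbf\mcal K_{\exp}$ (an interior point would force $\vbf_d=\zerobf$; a flat-branch point would, via its normal cone, force the configuration of Case~\ref{enu:item3}). A curved-boundary point of $\mcal K_{\exp}$ has the form $(\rho,1,\exp(\rho))s_p$ with $s_p>0$, so $\vbf_p=(d_r\rho,d_s,d_t\exp(\rho))s_p$. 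Complementarity together with $\vbf_d\in-\Dbf^{-1}\mcal K_{\exp}^*$ means $\vbf_d\in N_{\Dbf\mcal K_{\exp}}(\vbf_p)=\Dbf^{-1}N_{\mcal K_{\exp}}(\Dbf^{-1}\vbf_p)$, and the normal cone of $\mcal K_{\exp}$ at a smooth curved-boundary point is the ray through $(1,1-\rho,-\exp(-\rho))$ (proportional to the gradient of $s\exp(r/s)-t$); hence $\vbf_d=(d_r^{-1},(1-\rho)d_s^{-1},-d_t^{-1}\exp(-\rho))r_d$ with $r_d>0$, which is exactly the form in \eqref{eq:vpvd_defn}. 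The first two coordinates of $\vbf_0=\vbf_p+\vbf_d$ then form a $2\times2$ linear system in $(s_p,r_d)$ whose determinant is a positive multiple of $-(d_s^2d_r^{-2}+\rho(\rho-1))$; solving it yields precisely \eqref{eq:sp_rd}, and substituting into the third coordinate yields $h(\rho)=0$ as in \eqref{eq:h_rho}.

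It remains to justify the interval $(l_\rho,u_\rho)$ and to produce a root. For the interval, I would impose $s_p(\rho)>0$ and $r_d(\rho)>0$: the numerators in \eqref{eq:sp_rd} vanish at $\rho=a_4$ and $\rho=a_3$ respectively, so after reading off the sign of the common denominator $d_s^2d_r^{-2}+\rho(\rho-1)$, the set on which both are positive is exactly the interval listed in \eqref{eq:lrho_urho} for each sign pattern of $(r_0,s_0)$ not covered by Case~\ref{enu:item3} (and one checks the denominator never vanishes there, so $h$ is continuous on the interval). To get a root, I would examine $h$ near the endpoints: at a finite endpoint $a_3$ (resp.\ $a_4$) one of $r_d$ (resp.\ $s_p$) tends to $0$ while the surviving term has a determinable sign, and at an infinite endpoint the $\exp(\pm\rho)$ terms dominate; comparing the two endpoint signs gives a sign change, so the intermediate value theorem yields $\rho^*\in(l_\rho,u_\rho)$ with $h(\rho^*)=0$. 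Plugging $\rho^*$ into \eqref{eq:vpvd_defn} gives a pair with $\vbf_p+\vbf_d=\vbf_0$ by construction, with $\vbf_p\in\Dbf\mcal K_{\exp}$ and $\vbf_d\in-\Dbf^{-1}\mcal K_{\exp}^*$ because $s_p(\rho^*),r_d(\rho^*)>0$ place them on the respective curved boundaries, and with $\langle\vbf_p,\vbf_d\rangle=\big(\rho+(1-\rho)-1\big)s_pr_d=0$; uniqueness of the projection then both identifies this pair as the answer and shows $\rho^*$ is the unique root in the interval, since $\rho\mapsto\vbf_p(\rho)$ is injective.

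I expect the main obstacle to be the interval/sign bookkeeping in Case~\ref{enu:item4}: one must simultaneously arrange that $(l_\rho,u_\rho)$ avoids the poles of $s_p$ and $r_d$, coincides exactly with the locus $\{s_p>0,\ r_d>0\}$, and is a set across which $h$ changes sign, and this must be done separately for the three sign patterns of $(r_0,s_0)$. The second-order-cone analogue, Theorem~\ref{thm:soc_proj}, is cleaner because there the relevant scalar function is monotone on an obvious interval ($(0,\tfrac12)$ or $(\tfrac12,\infty)$); here the quadratic denominator $d_s^2d_r^{-2}+\rho(\rho-1)$ interacts with the exponential terms, making both the interval description and the sign-change argument more delicate.
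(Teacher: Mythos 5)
Your proposal follows essentially the same route as the paper's proof: Moreau decomposition (Lemma~\ref{lem:moreau_decomp}) plus uniqueness of the projection, direct verification in Cases~\ref{enu:item1}--\ref{enu:item3}, and in Case~\ref{enu:item4} the parametrized curved-boundary forms of $\vbf_{\textrm{p}},\vbf_{\textrm{d}}$, the $2\times 2$ system in the $r$- and $s$-coordinates yielding \eqref{eq:sp_rd}, positivity of $s_{\mathrm p},r_{\mathrm d}$ on $(l_\rho,u_\rho)$, and an intermediate-value argument for $h$ (your extra normal-cone derivation of the boundary form is sound but unnecessary once uniqueness is invoked). The one point to flag is that the ``determinable sign'' of $h$ at the finite endpoints is not routine bookkeeping: the paper obtains $h(a_3)=(s_0/d_s)\,d_t e^{a_3}-t_0>0$ and $h(a_4)<0$ precisely from the hypotheses $\vbf_0\notin\Dbf\mcal K_{\exp}$ and $\vbf_0\notin-\Dbf^{-1}\mcal K_{\exp}^{*}$, so those non-membership conditions must be invoked explicitly to get the sign change.
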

 
Theorem~\ref{thm:Moreau_sys_exp} provides the  solution to~\eqref{eq:diagonal_moreau}. 
The first three cases of Theorem~\ref{thm:Moreau_sys_exp} are simple cases. In practice, we first check the first three cases, followed by solving the root-finding problem of the nonlinear equation \eqref{eq:h_rho}.
Our strategy is built upon the projection algorithm for the unscaled exponential cone \citep{friberg2023projection}, yet the diagonal scaling matrix introduces a non-trivial coupling that complicates the root-finding analysis.

\begin{remark}
Projecting onto the rescaled dual exponential cone ($\proj_{\Dbf\mcal K_{\exp}^{*}}(\vbf_{0})$)
is equivalent to the following problem:
\begin{equation}
\vbf_{\textrm{d}}=\argmin_{\vbf}\   \norm{\vbf-\vbf_{0}}^{2}\ \ 
\st\   \vbf\in \Dbf\mcal K_{\exp}^{*}\ . 
\label{eq:proj_exp}
\end{equation}
This task can be completed by first projecting onto the primal cone, specifically, $\vbf_{\textrm{p}}=\proj_{\Dbf^{-1}\mcal K_{\exp}}(-\vbf_0)$, and the solution can then be recovered via  $\vbf_{\textrm{d}}=\vbf_0 + \vbf_{\textrm{p}}$.
\end{remark}

Before proving Theorem~\ref{thm:Moreau_sys_exp}, we first present
two preparatory lemmas.
\begin{lemma}
\label{lem:a1234cond}Let $\nu\in (0,\frac{1}{2}],\beta>0$ and
denote 
\begin{equation*}
a_{1}=\brbra{1-\sqrt{1-4\nu^{2}}}/{2},a_{2}=\brbra{1+\sqrt{1-4\nu^{2}}}/{2},a_{3}=\beta\nu,a_{4}=1-{\nu}/{\beta},I=\brbra{\min\bcbra{a_{3},a_{4}},\max\bcbra{a_{3},a_{4}}}.
\end{equation*}
If $a_{3}>a_{4}$,
then   $I\subseteq (-\infty , a_1) \cup (a_2,\infty )$. 
If $a_{4}>a_{3}$, then $I\subseteq\brbra{a_{1},a_{2}}$. 
\end{lemma}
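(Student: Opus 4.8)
The plan is to identify $a_1$ and $a_2$ as the roots of the quadratic $q(\rho):=\rho^2-\rho+\nu^2$. Indeed $a_1+a_2=1$ and $a_1a_2=\nu^2$, and since $\nu\in(0,\tfrac12]$ we have $0<a_1\le a_2<1$ (with equality iff $\nu=\tfrac12$). Hence $q(\rho)<0$ exactly on $(a_1,a_2)$ and $q(\rho)>0$ exactly on $(-\infty,a_1)\cup(a_2,\infty)$. The lemma then amounts to showing that $a_3$ and $a_4$ always lie on the \emph{same side} of the pair $\{a_1,a_2\}$, with the side governed by the sign of $a_3-a_4$. This ``same side'' point is the real content: knowing only $q(a_3)>0$ and $q(a_4)>0$ would not by itself rule out $a_3<a_1<a_2<a_4$, in which case $[a_1,a_2]\subseteq I$ and the inclusion would fail.

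The device that forces the same-side conclusion is a single auxiliary quadratic in $\beta$. Set $g(\beta):=\nu\beta^2-\beta+\nu$. A one-line computation gives $a_3-a_4=\beta\nu-1+\nu/\beta=g(\beta)/\beta$, so $\operatorname{sign}(a_3-a_4)=\operatorname{sign} g(\beta)$; and $g$ has roots $\beta_1=a_1/\nu$ and $\beta_2=a_2/\nu$ (check: product $a_1a_2/\nu^2=1$, sum $(a_1+a_2)/\nu=1/\nu$). I will also use the identities $\nu^2/a_1=a_2$ and $\nu^2/a_2=a_1$, which are immediate from $a_1a_2=\nu^2$.

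Then I split on the location of $\beta$. If $a_3>a_4$, i.e. $g(\beta)>0$, then $\beta<\beta_1$ or $\beta>\beta_2$. In the first subcase $a_3=\beta\nu<a_1$, while $\nu/\beta>\nu^2/a_1=a_2$ gives $a_4=1-\nu/\beta<1-a_2=a_1$, so both endpoints of $I$ lie in the interval $(-\infty,a_1)$, hence $I\subseteq(-\infty,a_1)\subseteq(-\infty,a_1)\cup(a_2,\infty)$; the subcase $\beta>\beta_2$ is symmetric and yields $I\subseteq(a_2,\infty)$. If $a_4>a_3$, i.e. $g(\beta)<0$, then $\beta_1<\beta<\beta_2$, so $a_3=\beta\nu\in(a_1,a_2)$ and $\nu/\beta\in(\nu^2/a_2,\nu^2/a_1)=(a_1,a_2)$, whence $a_4=1-\nu/\beta\in(a_1,a_2)$; since $(a_1,a_2)$ is an interval, $I\subseteq(a_1,a_2)$. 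I expect no genuine obstacle; the only point to watch is the degenerate case $\nu=\tfrac12$, where $a_1=a_2$, $\beta_1=\beta_2$, and $g\ge 0$ everywhere, so $g(\beta)<0$ never occurs and $g(\beta)=0$ forces $a_3=a_4$ (empty $I$), making both assertions vacuously true.
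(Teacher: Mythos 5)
Your proof is correct and follows essentially the same route as the paper's: both reduce the sign of $a_3-a_4$ to the sign of the auxiliary quadratic $\nu\beta^2-\beta+\nu$ with roots $a_1/\nu$ and $a_2/\nu$, then place $a_3$ and $a_4$ relative to $a_1,a_2$ in each of the resulting $\beta$-ranges using $a_1+a_2=1$ and $a_1a_2=\nu^2$. Your handling of the degenerate case $\nu=\tfrac12$ and your consistently strict inequalities are, if anything, slightly tidier than the paper's.
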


\begin{proof}
Recall $a_{1}+a_{2}=1$ and $a_{1}a_{2}=\nu^{2}$. Consider $a_{4}-a_{3}\;=\;1-\frac{\nu}{\beta}-\beta\nu\;=\;-\frac{1}{\beta}\bigl(\nu\beta^{2}-\beta+\nu\bigr).$
The quadratic function $q(\beta):=\nu\beta^{2}-\beta+\nu$ has two positive roots $
\beta_{1}:=\frac{a_{1}}{\nu}$ and $\beta_{2}:=\frac{a_{2}}{\nu}$,
since $a_{1}+a_{2}=1$ and $a_{1}a_{2}=\nu^{2}$. {Because $\beta>0$,}  $a_{3}>a_{4}$ is equivalent to $\beta\in(0,\beta_{1})\cup(\beta_{2},\infty)$ and $a_{4}>a_{3}$ is equivalent to $\beta\in(\beta_{1},\beta_{2})$.

\textit{Case 1: $a_{3}>a_{4}$.}
If $\beta\le \beta_{1}$, then $a_{3}=\beta\nu\le a_{1}$ and
$
a_{4}=1-\frac{\nu}{\beta}\;<\;1-\frac{\nu}{\beta_{1}}
\;=\;1-\frac{\nu}{a_{1}/\nu}
\;=\;1-\frac{\nu^{2}}{a_{1}}
\;=\;1-(1-a_{1})\;=\;a_{1},
$
so $I=\brbra{a_{4},a_{3}}\subset(-\infty,a_{1})$.  
If $\beta\ge \beta_{2}$, then $a_{3}=\beta\nu\ge a_{2}$ and
$
a_{4}=1-\frac{\nu}{\beta}\;\ge\;1-\frac{\nu}{\beta_{2}}
\;=\;1-\frac{\nu}{a_{2}/\nu}
\;=\;1-\frac{\nu^{2}}{a_{2}}
\;=\;1-a_{1}\;=\;a_{2},
$
so $I=\brbra{a_{4},a_{3}}\subset[a_{2},\infty)$.  
In either subcase $I\nsubseteq\brbra{a_{1},a_{2}}$, and in particular $a_{1},a_{2}\notin I$.

\textit{Case 2: $a_{4}>a_{3}$.}
Here $\beta\in(\beta_{1},\beta_{2})$, so
$
a_{1}\;<\;\beta\nu=a_{3}$ and $a_{4}=1-\frac{\nu}{\beta}\;<\;1-\frac{\nu}{\beta_{2}}
=1-{a_{1}}=a_{2}.
$
Thus $a_{1}<a_{3}<a_{4}<a_{2}$, hence $I=\brbra{a_{3},a_{4}}\subset (a_{1},a_{2})$
(and therefore $a_{1},a_{2}\notin I$). When $\nu=\tfrac12$, the interval
$\brbra{a_{1},a_{2}}$ collapses to $\{1/2\}$ and the strict branch $a_{4}>a_{3}$ is empty; the stated inclusion remains valid.
\hfill
\end{proof}

\begin{lemma}
\label{lem:l_rho_u_rho_sp_rd}
Suppose $r_{0}>0$ or $s_{0}>0$ and $(l_{\rho},u_{\rho})\neq\varnothing$. 
Then for every $\rho\in(l_{\rho},u_{\rho})$ (defined in~\eqref{eq:lrho_urho}), 
the quantities $s_{\textrm{p}}(\rho)$ and $r_{\textrm{d}}(\rho)$ 
(defined in~\eqref{eq:sp_rd}) satisfy $s_{\textrm{p}}(\rho)>0$ and $r_{\textrm{d}}(\rho)>0$.
\end{lemma}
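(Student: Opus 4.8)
The plan is to write both $s_{\textrm{p}}(\rho)$ and $r_{\textrm{d}}(\rho)$ as ratios of affine functions of $\rho$ over a common quadratic denominator, and then decide signs by a short case analysis. Set $\nu := d_s/d_r > 0$. The common denominator in \eqref{eq:sp_rd} is $D(\rho) := \rho^2 - \rho + \nu^2$, whose roots, when real (i.e.\ when $\nu \le \tfrac12$), are exactly the numbers $a_1,a_2$ of Lemma~\ref{lem:a1234cond}; since $D(0) = D(1) = \nu^2 > 0$ and the roots have sum $1$ and product $\nu^2 > 0$, any real roots lie in $(0,1)$. The numerators factor as $s_0 d_r^{-2} d_s - d_r^{-1} r_0(1-\rho) = d_r^{-1} r_0(\rho - a_4)$ when $r_0 \neq 0$, and $d_s^2 d_r^{-1}(r_0 - d_s^{-1} d_r \rho s_0) = -d_s s_0(\rho - a_3)$ when $s_0 \neq 0$; in the degenerate cases $r_0 = 0$ (resp.\ $s_0 = 0$) the corresponding numerator reduces to the positive constant $s_0 d_r^{-2} d_s$ (resp.\ $d_s^2 d_r^{-1} r_0$). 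Hence $s_{\textrm{p}}(\rho), r_{\textrm{d}}(\rho) > 0$ on $(l_\rho,u_\rho)$ as soon as $D(\rho)$ and the two affine factors share a common sign throughout that interval, and the proof reduces to verifying this sign agreement interval by interval.

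First I would handle the main case $r_0 > 0,\ s_0 > 0$. Put $\beta := r_0/s_0 > 0$; then $a_3 = \beta\nu$ and $a_4 = 1 - \nu/\beta$ coincide with the quantities in Lemma~\ref{lem:a1234cond}, and $(l_\rho,u_\rho)$ equals the interval $I$ there. If $\nu \le \tfrac12$: when $a_3 > a_4$, Lemma~\ref{lem:a1234cond} gives $I = (a_4,a_3) \subseteq (-\infty,a_1)\cup(a_2,\infty)$, so $D > 0$ on $I$, while $\rho > a_4$ and $\rho < a_3$ on $I$ make both affine factors positive; when $a_4 > a_3$, $I = (a_3,a_4) \subseteq (a_1,a_2)$, so $D < 0$ on $I$, while $\rho < a_4$ and $\rho > a_3$ make both affine factors negative, so the ratios are again positive (and if $a_3 = a_4$ then $I = \varnothing$, contrary to hypothesis). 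If $\nu > \tfrac12$, then $D > 0$ everywhere, and because the quadratic $q(\beta) = \nu\beta^2 - \beta + \nu$ has negative discriminant and positive leading coefficient it is strictly positive, so $a_3 - a_4 = q(\beta)/\beta > 0$; thus $I = (a_4,a_3)$ and both affine factors are positive there.

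Next I would treat $r_0 \le 0,\ s_0 > 0$, where $(l_\rho,u_\rho) = (-\infty,a_3)$ with $a_3 = r_0 d_s/(s_0 d_r) \le 0$. Since the real roots of $D$ (if any) lie in $(0,1)$, we have $D > 0$ on $(-\infty,0] \supseteq (-\infty,a_3)$. The $r_{\textrm{d}}$-numerator $-d_s s_0(\rho - a_3)$ is positive since $s_0 > 0$ and $\rho < a_3$; the $s_{\textrm{p}}$-numerator is the positive constant $s_0 d_r^{-2} d_s$ if $r_0 = 0$, and otherwise equals $d_r^{-1} r_0(\rho - a_4)$ with $r_0 < 0$ and $a_4 = 1 - s_0 d_s/(r_0 d_r) > 1 > a_3 > \rho$, hence again positive. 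The case $r_0 > 0,\ s_0 \le 0$ with $(l_\rho,u_\rho) = (a_4,+\infty)$, $a_4 \ge 1$, is symmetric: $D > 0$ on $[1,+\infty)$, the $s_{\textrm{p}}$-numerator $d_r^{-1} r_0(\rho - a_4)$ is positive, and the $r_{\textrm{d}}$-numerator is the positive constant $d_s^2 d_r^{-1} r_0$ if $s_0 = 0$ and otherwise $-d_s s_0(\rho - a_3)$ with $a_3 < 0 < a_4 < \rho$, hence positive.

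The main obstacle is the case $r_0 > 0,\ s_0 > 0$: there $D(\rho)$ genuinely changes sign, so one cannot simply argue that ``all three factors are positive'', and a naive sign count breaks down exactly when $I$ sits between the roots of $D$. The resolution is that Lemma~\ref{lem:a1234cond} precisely controls whether $I$ lies inside $(a_1,a_2)$ or in its complement, and this dichotomy is matched term-for-term by whether the two affine numerators are negative or positive on $I$; making this correspondence line up—together with the separate treatment of $\nu > \tfrac12$ (where $D$ has no real roots) and of the degeneracies $r_0 = 0$ or $s_0 = 0$—is the crux, while the remaining cases follow from elementary sign bookkeeping once one observes that $a_3 \le 0$ and $a_4 \ge 1$ there.
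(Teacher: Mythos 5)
Your proposal is correct and follows essentially the same route as the paper's proof: the same reduction to a common quadratic denominator $D(\rho)=\rho^{2}-\rho+\nu^{2}$ (the paper's $\tilde d(\rho)$) with affine numerators, the same appeal to Lemma~\ref{lem:a1234cond} to align the sign of $D$ with the signs of the numerators in the case $r_{0}>0,\ s_{0}>0$, and the same elementary sign bookkeeping in the remaining cases. Writing the numerators explicitly as $d_r^{-1}r_0(\rho-a_4)$ and $-d_s s_0(\rho-a_3)$ is a mild notational streamlining of the paper's $\tilde s,\tilde r$, not a different argument.
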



\begin{proof}[Proof]
For simplicity, set $\tau:=d_{s}/d_{r}>0$. Then $
a_{3}=\frac{r_{0}}{s_{0}}\tau$ and $a_{4}=1-\frac{s_{0}}{r_{0}}\tau$,
and introduce
\begin{equation}\label{eq:tilde_defs}
\tilde d(\rho)=(\rho-\tfrac12)^2+(\tau^2-\tfrac14),\quad
\tilde s(\rho)=s_{0}\tau-r_{0}+r_{0}\rho,\quad
\tilde r(\rho)=\tau r_{0}-\rho s_{0}.
\end{equation}
A direct simplification of~\eqref{eq:sp_rd} gives $s_{\textrm{p}}(\rho)=d_{r}^{-1}\,\frac{\tilde s(\rho)}{\tilde d(\rho)}$ and $r_{\textrm{d}}(\rho)=d_{s}\,\frac{\tilde r(\rho)}{\tilde d(\rho)}$.
Since the $d_{r}^{-1}$ and $d_{s}$ are positive, it suffices to show that 
for $\rho\in(l_{\rho},u_{\rho})$, the fractions $\tilde s(\rho)/\tilde d(\rho)$ 
and $\tilde r(\rho)/\tilde d(\rho)$ are positive.
 
\noindent\textit{Case 1. $r_{0}=0$, $s_{0}>0$.}  
Here $\tilde s(\rho)=s_{0}\tau>0$ and $\tilde r(\rho)=-\rho s_{0}$. 
For $\rho<a_{3}=0$ we have $\tilde r(\rho)>0$, and since $\rho<0$,
\(\tilde d(\rho)=(\rho-\tfrac12)^2+(\tau^2-\tfrac14)>\tau^2-1/4+1/4=\tau^2>0\).  
Thus all signs match and positivity of the fractions $\tilde s(\rho)/\tilde d(\rho)$ 
and $\tilde r(\rho)/\tilde d(\rho)$ holds.
 
\noindent\textit{Case 2. $r_{0}>0$, $s_{0}=0$.}  
Here $\tilde r(\rho)=\tau r_{0}>0$ and $\tilde s(\rho)=r_{0}(\rho-1)$. 
For $\rho>a_{4}=1$, we have $\tilde s(\rho)>0$. Also $\tilde d(\rho)>0$ for $\rho>1$ since $(\rho-\tfrac12)^2\ge 1/4$.  
Thus positivity holds.
 
\noindent\textit{Case 3. $r_{0}<0$, $s_{0}>0$.}  
For $\rho<a_{3}=(r_{0}/s_{0})\tau<0$, we have $
\tilde r(\rho)=\tau r_{0}-\rho s_{0}>\tau r_{0}-a_{3}s_{0}=0,$
and also $\tilde s(\rho)=s_{0}\tau-r_{0}+r_{0}\rho> s_{0}\tau-r_{0}>0$ since $r_{0}<0<s_{0}$.  
Furthermore $\rho<0$ implies $\tilde d(\rho)>0$. Thus positivity holds.

\noindent\textit{Case 4. $r_{0}>0$, $s_{0}<0$.}  
For $\rho>a_{4}=1-(s_{0}/r_{0})\tau>1$, we compute $
\tilde s(\rho)=s_{0}\tau-r_{0}+r_{0}\rho > s_{0}\tau-r_{0}+r_{0}(1-(s_{0}/r_{0})\tau)=0,$
and
$
\tilde r(\rho)=\tau r_{0}-\rho s_{0}>\tau r_{0}-s_{0}>0,
$
since $s_{0}<0<r_0$. Also $\rho>1$ guarantees $\tilde d(\rho)>0$. Thus positivity holds.

\noindent\textit{Case 5. $r_{0}>0$, $s_{0}>0$.}  This is the most complicated case. We will discuss based on the value of $\tau$. 

  If $\tau > 1/2$, then $\tilde{d}(\rho) >0$.   Moreover,
$a_{3}-a_{4}=\tau\!\left(\frac{r_{0}}{s_{0}}+\frac{s_{0}}{r_{0}}\right)-1 \;\ge\;2\tau-1\;>\;0,$
so $(l_{\rho},u_{\rho})=(a_{4},a_{3})$. Hence $\tilde s(\rho)>0$, $\tilde r(\rho)>0$, and $\tilde d(\rho)>0$ on $(l_{\rho},u_{\rho})$, which gives $s_{p}(\rho)>0$ and $r_{d}(\rho)>0$.
 If $\tau = 1/2$, then  $\tilde d(\rho)\ge0$, with equality only at $\rho=\tfrac12$.  
By Lemma~\ref{lem:a1234cond} (with $\nu=\tfrac12$, $\beta=r_{0}/s_{0}$), if $a_{3}>a_{4}$ then $(a_{4},a_{3})\subset (-\infty,\tfrac12)\cup(\tfrac12,\infty)$, so $\tfrac12\notin(l_{\rho},u_{\rho})$ and $\tilde d(\rho)>0$ on $(l_{\rho},u_{\rho})$;  
if $a_{4}\ge a_{3}$, then $(l_{\rho},u_{\rho})$ would be empty, contradicting the assumptions.  
Thus we must be in the former situation, where again $\tilde s(\rho),\tilde r(\rho)>0$ and positivity holds.
  Finally, if $\tau \in (0, \frac{1}{2})$, let $a_{1}=\frac{1-\sqrt{1-4\tau^{2}}}{2}$ and $a_{2}=\frac{1+\sqrt{1-4\tau^{2}}}{2}$,
so $\tilde d(\rho)<0$ exactly for $\rho\in(a_{1},a_{2})$ and $\tilde d(\rho)>0$ otherwise. By Lemma~\ref{lem:a1234cond}, if $a_{3}>a_{4}$, then $(l_{\rho},u_{\rho})=(a_{4},a_{3})\subset (-\infty,a_{1})\cup(a_{2},\infty)$, where $\tilde d(\rho)>0$. On this interval $\tilde s(\rho),\tilde r(\rho)>0$, so the fractions $\tilde s(\rho)/\tilde d(\rho)$ 
and $\tilde r(\rho)/\tilde d(\rho)$ are positive. Otherwise, if $a_{4}>a_{3}$, then $(l_{\rho},u_{\rho})=(a_{3},a_{4})\subset (a_{1},a_{2})$, where $\tilde d(\rho)<0$. On this interval $\tilde s(\rho),\tilde r(\rho)<0$, so the fractions $\tilde s(\rho)/\tilde d(\rho)$ 
and $\tilde r(\rho)/\tilde d(\rho)$  are again positive.

\smallskip
In all subcases we obtain $\tilde s(\rho)/\tilde d(\rho)>0$ and $\tilde r(\rho)/\tilde d(\rho)>0$ for $\rho\in(l_{\rho},u_{\rho})$.  
Since $d_{r}^{-1}$ and $d_{s}^{2}$ are positive, this proves $s_{p}(\rho)>0$ and $r_{d}(\rho)>0$.
\hfill
\end{proof}

Now, we are ready to prove Theorem~\ref{thm:Moreau_sys_exp}.

\begin{proof}[Proof of Theorem~\ref{thm:Moreau_sys_exp}]
By Lemma~\ref{lem:moreau_decomp}, the desired pair $(\vbfp,\vbfd)$ is the unique solution of \eqref{eq:diagonal_moreau}.
We use the convention $x^{+}=\max\{x,0\}$ and $x^{-}=-\min\{x,0\}$, so $x=x^{+}-x^{-}$ and $x^{+}x^{-}=0$.

\medskip\noindent{Cases \ref{enu:item1}--\ref{enu:item2}.}
These follow directly from Lemma~\ref{lem:moreau_decomp} and uniqueness of projection.

\medskip\noindent{Case \ref{enu:item3} ($r_0\le 0$ and $s_0\le 0$) under the present convention.}
Set $\vbfp=(r_0,\,0,\,t_0^{+})$ and $\vbfd=(0,\,s_0,\,-t_0^{-}).$
Then $\vbfp+\vbfd=\vbf_0$ because $t_0=t_0^{+}-t_0^{-}$. Moreover $
\Dbf^{-1}\vbfp=(r_0/d_r,\,0,\,t_0^{+}/d_t)\in\mcal K_{\exp}$
(the $s=0$ branch with $r\le 0$ and $t\ge 0$), and $
-\Dbf\,\vbfd=(0,\,-d_s s_0,\,d_t t_0^{-})\in\mcal K_{\exp}^{*}$
(the $r=0$ branch with $s\ge 0$ and $t\ge 0$, since $s_0\le 0$ and $-t_0^{-}\le 0$). Finally, $
\langle \vbfp,\vbfd\rangle=r_0\cdot 0+0\cdot s_0-t_0^{+}\,t_0^{-}=0,$
because $t_0^{+}t_0^{-}=0$. Hence \eqref{eq:diagonal_moreau} holds.

\medskip\noindent{Case \ref{enu:item4} (general case).}
Here $\vbf_0\notin \Dbf\Kcal_{\exp}$ and $\vbf_0\notin -\,\Dbf^{-1}\Kcal_{\exp}^{*}$, and at least one of $r_0>0$ or $s_0>0$ holds (the complement of Case~\ref{enu:item3}). We show that the formulas \eqref{eq:sp_rd} and \eqref{eq:vpvd_defn} with $\rho\in(l_\rho,u_\rho)$ yield the unique Moreau pair.

\emph{Step 1 (membership and orthogonality).}
For any $\rho\in\mbb R$ and positive scalars $s_{\mathrm p},r_{\mathrm d}$ define $
\vbfp=\bigl(d_r\rho,\,d_s,\,d_t e^{\rho}\bigr)\,s_{\mathrm p}$ and $
\vbfd=\bigl(d_r^{-1},\,(1-\rho)d_s^{-1},\,-d_t^{-1} e^{-\rho}\bigr)\,r_{\mathrm d}.$
Then
\(
\Dbf^{-1}\vbfp=s_{\mathrm p}\,(\rho,1,e^{\rho})\in\Kcal_{\exp}
\)
and
\(
-\Dbf\,\vbfd=r_{\mathrm d}\,(-1,\rho-1,e^{-\rho})\in\Kcal_{\exp}^{*}
\)
whenever $s_{\mathrm p}>0$ and $r_{\mathrm d}>0$, each with equality in the defining inequalities. Moreover, $
\langle \vbfp,\vbfd\rangle=s_{\mathrm p}r_{\mathrm d}\,\bigl(\rho+(1-\rho)-1\bigr)=0,$
so orthogonality is automatic, independently of $\rho$, $s_{\mathrm p}$, and $r_{\mathrm d}$. By Lemma~\ref{lem:l_rho_u_rho_sp_rd}, for every $\rho\in(l_\rho,u_\rho)$ we have $s_{\mathrm p}(\rho)>0$ and $r_{\mathrm d}(\rho)>0$.  Therefore \(\vbfp\in\Dbf\Kcal_{\exp}\) and \(\vbfd\in-\,\Dbf^{-1}\Kcal_{\exp}^{*}\) for all $\rho\in(l_\rho,u_\rho)$, and $\langle \vbfp,\vbfd\rangle=0$.

\emph{Step 2 (matching the $r$- and $s$-coordinates).} A  direct substitution into the $r$- and $s$-components of $\vbfp+\vbfd=\vbf_0$ gives the equations:
$
\underbrace{d_r\rho\,s_{\mathrm p}(\rho)}_{\text{$r$ from }\vbfp}
\;+\;
\underbrace{d_r^{-1}\,r_{\mathrm d}(\rho)}_{\text{$r$ from }\vbfd}
\;=\;r_0,
$ and $
\underbrace{d_s\,s_{\mathrm p}(\rho)}_{\text{$s$ from }\vbfp}
\;+\;
\underbrace{d_s^{-1}(1-\rho)\,r_{\mathrm d}(\rho)}_{\text{$s$ from }\vbfd}
\;=\;s_0,
$
Furthermore, with $s_{\mathrm p}$
 and $r_{\mathrm d}(\rho) $ as in \eqref{eq:sp_rd}, it can be checked that $\vbfp$ and $\vbfd$ match $\vbf_0$ in the $r$- and $s$-coordinates.

\emph{Step 3 (matching the $t$-coordinate).}
Furthermore, the remaining $t$-coordinate equation $\bigl(\vbfp+\vbfd\bigr)_t=t_0$ is precisely equivalent to the scalar equation \eqref{eq:h_rho}.
The only possible singularities of $h(\rho)$ occur when the denominator is zero, i.e.,
\[
d_s^{2}d_r^{-2}+\rho(\rho-1)=\bigl(\rho-\tfrac12\bigr)^2+\bigl((d_s/d_r)^2-\tfrac14\bigr)=0,
\]
i.e., at $\rho=a_1,a_2\in(0,1)$ (defined in the above proof of Lemma~\ref{lem:l_rho_u_rho_sp_rd}) when $d_s/d_r<\tfrac12$. By the proof of Lemma~\ref{lem:l_rho_u_rho_sp_rd}, the open interval $(l_\rho,u_\rho)$ used in \eqref{eq:lrho_urho}  never contains $a_1$ or $a_2$. Hence $h$ is continuous on $(l_\rho,u_\rho)$.
We then show that $h(\rho)$ attains opposite signs at the endpoints (or limits) of $(l_\rho,u_\rho)$ in each branch of \eqref{eq:lrho_urho}.

\smallskip
\emph{(i) $r_0>0$, $s_0>0$.} Here $(l_\rho,u_\rho)=(\min\{a_3,a_4\},\max\{a_3,a_4\})$. A direct simplification of \eqref{eq:sp_rd} shows $
r_{\mathrm d}(a_3)=0$ and $s_{\mathrm p}(a_4)=0,$
and consequently
\begin{equation}\label{eq:h_at_a3}
h(a_{3}) =\frac{s_{0}\dr^{-1}\ds-r_{0}(1-a_{3})}{\ds^{2}\dr^{-1}+\dr a_{3}(a_{3}-1)}d_{t}\exp(a_{3})-t_{0}=\frac{s_{0}\dr^{-1}\ds-r_{0}(1-\frac{r_{0}\ds}{s_{0}\dr})}{\ds^{2}\dr^{-1}+\dr\frac{r_{0}\ds}{s_{0}\dr}(\frac{r_{0}\ds}{s_{0}\dr}-1)}d_{t}\exp(a_{3})-t_{0}=\frac{s_{0}}{\ds}d_{t}e^{a_3}-t_{0}
\end{equation}
and 
\begin{equation}\label{eq:h_at_a4}
\begin{aligned}
  & h(a_{4})=-\frac{r_{0}-\ds^{-1}\dr\brbra{1-\frac{s_{0}\ds}{r_{0}\dr}}s_{0}}{\dr^{-1}+\ds^{-2}\dr\brbra{1-\frac{s_{0}\ds}{r_{0}\dr}}(-\frac{s_{0}\ds}{r_{0}\dr})}d_{t}^{-1}\exp\brbra{\frac{s_{0}\ds}{r_{0}\dr}-1}-t_{0}\\
 & =\frac{\ds^{-1}\dr\brbra{1-\frac{s_{0}\ds}{r_{0}\dr}}s_{0}-r_{0}}{\dr^{-1}+\ds^{-1}\brbra{1-\frac{s_{0}\ds}{r_{0}\dr}}\brbra{-\frac{s_{0}}{r_{0}}}}d_{t}^{-1}\exp\brbra{\frac{s_{0}\ds}{r_{0}\dr}-1}-t_{0} =-\dr r_{0}d_{t}^{-1}\exp\brbra{\frac{s_{0}\ds}{r_{0}\dr}-1}-t_{0} \ .
\end{aligned} 
\end{equation}
Since $\vbf_0\notin \Dbf\Kcal_{\exp}$ with $s_0>0$, the membership test for $(r_0/d_r,s_0/d_s,t_0/d_t)$ in $\Kcal_{\exp}$ fails, i.e.,
\(
t_0< (s_0/d_s)\,d_t\,e^{a_3},
\)
so $h(a_3)>0$. Likewise, since $\vbf_0\notin -\,\Dbf^{-1}\Kcal_{\exp}^{*}$ with $r_0>0$, the test for $(-d_r r_0,-d_s s_0,-d_t t_0)\in \Kcal_{\exp}^{*}$ fails on the $r<0$ branch, i.e.,
\(
t_0> -\,d_r r_0\,d_t^{-1}\exp\!\bigl(\tfrac{d_s s_0}{d_r r_0}-1\bigr),
\)
so $h(a_4)<0$. In particular $a_3\neq a_4$, hence $(l_\rho,u_\rho)$ is a nonempty open interval and $h(\rho)$ changes sign across it.

\smallskip
\emph{(ii) $r_0\le 0$, $s_0>0$.} Here $(l_\rho,u_\rho)=(-\infty,a_3)$ (with $a_3\le 0$). As $\rho\to -\infty$, the term $-d_t^{-1}e^{-\rho}r_{\mathrm d}(\rho)$ dominates with negative sign and $h(\rho)\to -\infty$. At $\rho=a_3$, the same calculation as in \eqref{eq:h_at_a3} yields $h(a_3)=(s_0/d_s)\,d_t\,e^{a_3}-t_0>0$ because $\vbf_0\notin \Dbf\Kcal_{\exp}$ with $s_0>0$. Thus a sign change occurs on $(-\infty,a_3)$.

\smallskip
\emph{(iii) $r_0>0$, $s_0\le 0$.} Here $(l_\rho,u_\rho)=(a_4,\infty)$ (with $a_4\ge 1$). As $\rho\to +\infty$, the term $d_t e^{\rho}s_{\mathrm p}(\rho)$ dominates positively and $h(\rho)\to +\infty$. At $\rho=a_4$, the same dual-side evaluation as in \eqref{eq:h_at_a4} yields $h(a_4)<0$. Hence a sign change occurs on $(a_4,\infty)$.

By the intermediate value theorem, in each branch there exists $\rho^\star\in(l_\rho,u_\rho)$ with $h(\rho^\star)=0$.
 
\emph{Step 4.}
With this unique $\rho^\star\in(l_\rho,u_\rho)$ and $s_{\mathrm p}=s_{\mathrm p}(\rho^\star)$, $r_{\mathrm d}=r_{\mathrm d}(\rho^\star)$ from \eqref{eq:sp_rd}, the definitions \eqref{eq:vpvd_defn}  give $\vbfp\in\Dbf\Kcal_{\exp}$, $\vbfd\in-\,\Dbf^{-1}\Kcal_{\exp}^{*}$, $\vbfp+\vbfd=\vbf_0$, and $\langle \vbfp,\vbfd\rangle=0$. By Lemma~\ref{lem:moreau_decomp}, $\vbfp=\proj_{\Dbf\Kcal_{\exp}}(\vbf_0)$ and $\vbfd=\proj_{-\Dbf^{-1}\Kcal_{\exp}^{*}}(\vbf_0)$, completing the proof.
\hfill
\end{proof}

\section{Supplementary details of experiments}\label{appendix:conic_programs}
In this section, we present the additional details for the experiments in Sections~\ref{sec:fisher_market}, \ref{sec:lasso}, and \ref{sec:mpo}, including the conic program reformulations of the original problems and the details of the data generation.
\subsection{Fisher market equilibrium problem  (Section~\ref{sec:fisher_market})} 
 Fisher market equilibrium instances~\cite{eisenberg1959consensus,ye2008path} can be written as follows: 
\begin{equation}\label{eq:arrow_market}
\begin{aligned}\min_{\Xbf \in\mbb R^{m\times n}} -\sum_{i\in[m]}\wbf_{i}\Brbra{\log\Brbra{\sum_{j\in[n]}\Ubf_{ij}\Xbf_{ij}}}\ \ \st  \sum_{i\in[m]}\Xbf_{ij}=\bbf_j\ , \ \  \Xbf_{ij}\geq0\ ,
\end{aligned}
\end{equation}
where $m$ is the number of buyers, $n$ is the number of goods, $\wbf\in\mbb R^{m},\Ubf\in\mbb R^{m\times n}$. The vector $\wbf\in \mbb R^m$ represents the monetary endowment of each buyer, $\Ubf\in \mbb R^{m\times n}$ denotes the utility of each buyer for each good and the $j$-th entry of $\bbf$ denotes the overall amount of goods $j$. 
In this formulation, each buyer $i$ allocates their budget $\wbf_i$ to maximize their utility, while each seller $j$ offers a good for sale. An equilibrium is reached when goods are priced so that every buyer acquires an optimal bundle and the market clears, meaning all budgets are spent and all goods are sold.

The problem \eqref{eq:arrow_market} is equivalently written as the following:
\begin{equation}\label{pro:fisher_full}
    \min_{\pbf\in\mbb R^{m}\ ,\ \Xbf\in\mbb R^{m\times n}, \ \tbf\in \mbb R^{m}} \ \ -\sum_{i\in[m]}\wbf_{i}\pbf_{i}\ , \st\ \ 
\begin{aligned} \quad & \sum_{i\in[m]}\Xbf_{ij}=\bbf_{j}\ ,\ \forall~ j\in[n]\  , \tbf_{i}=\sum_{j\in[n]}\Ubf_{ij}\Xbf_{ij}\ ,\ \forall~ i\in[m]\ ,\\
 & \pbf_{i}\leq\log\brbra{\tbf_{i}},\forall~ i\in[m]\ , \Xbf_{ij}\geq0\ ,\ \forall i\in[m]\ , \ j\in[n]\ ,
\end{aligned}
\end{equation}
where $\wbf\in\mathbb{R}^m$ is the vector of buyer budgets and $\Ubf\in\mathbb{R}^{m\times n}$ is the utility matrix.

We now derive an equivalent conic formulation.
Let the decision variables be stacked as $\ybf \in \mbb R^{m(n+2)}$: $\ybf=\left[\Xbf_{1,:}^{\top},\ldots,\Xbf_{m,:}^{\top},(\pbf_{1},\tbf_{1}),\ldots,(\pbf_{m},\tbf_{m})\right]^{\top}$.
Let the problem parameters be stacked as follows in $\cbf, \Abf,\tilde{\bbf},\Qbf,\dbf$: $\cbf=\left[\zerobf_{mn\times1}^{\top},(-\wbf_{1},0),\ldots,(-\wbf_{m},0)\right]^{\top}$,  
\begin{equation*}
\begin{aligned}
\Abf=\left[\begin{array}{cccccc}
\Ibf_{n\times n} & \cdots & \Ibf_{n\times n}\\
\Ubf_{1,:} &  &  & (0,-1)\\
 & \ldots &  &  & \cdots\\
 &  & \Ubf_{m,:} &  &  & (0,-1)
\end{array}\right],\tilde{\bbf}=\left[\begin{array}{c}
\bbf\\
\zerobf_{m}
\end{array}
\right],\\
\Qbf=\left[\zerobf_{3m\times mn} \ \ \Ibf_{m}\otimes\bsbra{\ebf_{3,1} \ \ebf_{3,3}}\right],\dbf=\left[\onebf_{m\times1}\otimes\brbra{-\ebf_{3,2}}\right],
\end{aligned}
\end{equation*}
where $\ebf_{3,i}\in \mbb R^{3}$ is the $i$-th standard basis vector of $\mbb R^{3}$.

Then \eqref{pro:fisher_full} is equivalent to the following conic program problem:
\begin{equation}\label{pro:fisher_cp}
\min_{\ybf\in \mbb R^{m(n+2)}}\ \ \cbf^{\top}\ybf\ ,\ \st\  \Abf \ybf = \tilde{\bbf}\ ,\ \Qbf \ybf - \dbf \in\mcal K_{\exp}^{m} , \ \ybf\geq \lbf
\end{equation}
where $\mcal K_{\exp}^{m}:=\underbrace{\mcal K_{\exp}\times\cdots\times\mcal K_{\exp}}_{m \text{ items}}$ and $\lbf:=\bsbra{\underbrace{0,\ldots,0}_{mn\text{ items}},\underbrace{-\infty,\ldots,-\infty}_{2m\text{ items}}}$.

In the experiments, we consider some different dimension choices of $m$ and $n$ for matrix $\Ubf$ while maintaining a constant sparsity level of $0.2$. Each nonzero entry of $\wbf$ and $\Ubf$ is sampled independently from the uniform distribution $U\left[0,1\right]$, and we set $\bbf_j=0.25$ for all $ j\in \left[n\right]$.   

\subsection{Lasso Problem (Section \ref{sec:lasso})\label{sec:lasso_socp}} 
In this subsection, we present the SOCP reformulation of the Lasso problem following the approach of \cite{deng2024enhanced}. The original Lasso problem is as follows $\min_{\xbf\in\mbb R^{n}}\norm{\Abf\xbf-\bbf}^{2}+\lambda\norm{\xbf}_{1}$, 
where $\Abf\in\mbb R^{m\times n}$. Let $\xbf=\xbf_1 - \xbf_2,\xbf_1\in \mbb R_+^n,\xbf_2\in \mbb R_+^n$ and $\ybf=\Abf \xbf -\bbf$, then the above problem can be rewritten as
$$
\min_{\xbf_1\in \mbb R_+^n,\xbf_2\in \mbb R_+^n, \ybf\in \mbb R^m, r\in \mbb R} 2r+\lambda \cdot \onebf_{n\times 1}^{\top}\brbra{\xbf_1 + \xbf_2} \,\, \st\,\, \ybf = \Abf(\xbf_1 - \xbf_2)- \bbf\ , \norm{\ybf}^2\leq 2r .
$$
It should be noted that the constraint $\norm{\ybf}^2\leq 2r$ can be reformulated as a second-order cone constraint $
\left[\begin{array}{ccc}
\frac{1+r}{\sqrt{2}} &  \frac{1-r}{\sqrt{2}} & \ybf^{\top}\end{array}\right]^{\top}\in\mcal K_{\text{soc}}^{m+2} 
$ because it is essentially $\left(  \frac{1+r}{\sqrt{2}}\right)^2 - \left(  \frac{1-r}{\sqrt{2}}\right)^2 - \|\ybf\|^2 \ge 0$, which reduces to $2r - \|\ybf\|^2 \ge 0$.

Since the rotated second-order cone is essentially a linearly transformed second-order cone, the SOCP reformulation of the Lasso problem is given by 
\begin{equation*}
    \begin{aligned}
\min_{\begin{array}{c}
w\in\mbb R,r\in\mbb R,\ybf\in\mbb R^{m}\\
\xbf_{1}\in\mbb R_{+}^{n},\xbf_{2}\in\mbb R_{+}^{n}
\end{array}}\ \ &\inner{\left[\begin{array}{cccc}
0 & 2 & \brbra{\zero_{m\times1}}^{\top} & \lambda\cdot\brbra{\onebf_{2n\times1}}^{\top}\end{array}\right]^{\top}}{\left[\begin{array}{ccccc}
w & r & \ybf^{\top} & \xbf_{1}^{\top} & \xbf_{2}^{\top}\end{array}\right]^{\top}}\\
\st\quad\quad\quad \ \ \ &\left[\begin{array}{ccccc}
1 & 0 & 0 & \cdots & 0\\
\zero_{m\times1} & \zero_{m\times1} & \Ibf_{m\times m} & -\Abf & \Abf
\end{array}\right]\left[\begin{array}{ccccc}
w & r & \ybf^{\top} & \xbf_{1}^{\top} & \xbf_{2}^{\top}\end{array}\right]^{\top}=\left[\begin{array}{c}
1\\
-\bbf
\end{array}\right],\\
&\left[\begin{array}{ccc}
\frac{w+r}{\sqrt{2}} & \frac{w-r}{\sqrt{2}} & \ybf^{\top}\end{array}\right]^{\top}\in\mcal K_{\text{soc}}^{m+2}
    \end{aligned}
    \ .
\end{equation*}

We generate a family of synthetic Lasso problem instances following the experimental setting used for ABIP in~\cite{deng2024enhanced}.
Specifically, the matrix $\Abf \in \mathbb{R}^{m \times n}$ is generated with a fixed sparsity level of $10^{-4}$, where each nonzero entry is drawn independently from the uniform distribution, i.e., $\Abf_{ij}\sim U[0,1]$ for all $i$ and $j$. The label vector $\bbf$ is then generated by $\bbf=\Abf\Tilde{\xbf}+10^{-6}\cdot \onebf$, where $\Tilde\xbf\in \mbb R^n$ has entries drawn independently from the standard normal distribution, and half of its components are randomly set to zero. The vector $\onebf$ denotes the all-ones vector. We set $\lambda=\norm{\Abf^\top \bbf}_{\infty}$, as in \cite{deng2024enhanced}.

\subsection{Multi-period Portfolio Optimization (Section \ref{sec:mpo})}
In this subsection, we present the multi-period portfolio optimization (MPO) problem and its conic program reformulation. 
\cite{boyd2017multi} present a penalty-based formulation of the MPO problem as follows:
\begin{equation}\label{eq:cvx_trading}
    \begin{aligned}\max_{\substack{\wbf_{\tau+1},\zbf_{\tau} \in \mbb R^{n+1}:\\ \tau=0,\ldots,T-1}}&\sum_{\tau=0}^{T-1}\left\{ \hat{\rbf}_{\tau+1}^{\top}\wbf_{\tau+1} - \gamma_{\tau}^{\text{risk}} \hat{\psi}_{\tau}(\wbf_{\tau+1}) - \gamma_{\tau}^{\text{hold}} \hat{\phi}_{\tau}^{\text{hold}}(\wbf_{\tau+1}) - \gamma_{\tau}^{\text{trade}} \hat{\phi}_{\tau}^{\text{trade}}(\zbf_{\tau}) \right\} \\
    \text{s.t.} \quad \ \ \ &\ \ \mathbf{1}^{\top}\zbf_{\tau}=0,\ \ \zbf_{\tau}\in\mathcal{Z}_{\tau},\ \wbf_{\tau}+\zbf_{\tau}\in\mathcal{W}_{\tau},  \ \wbf_{\tau+1} = \wbf_{\tau} + \zbf_{\tau},\ \text{for all } \tau = 0, \ldots, T-1,
    \end{aligned}
\end{equation}
where the decision variable $\wbf_{\tau+1} \in \mathbb{R}^{n+1}$ represents the weights of assets for the period $\tau+1$, in which the first $n$ components denote $n$ assets and the $(n+1)$-th component denotes the weight of cash. 
The other decision variable $\zbf_{\tau}$ denotes the transaction amount in the period $\tau$.

In this formulation, the expected return $\hat{\rbf}_{\tau+1} \in \mathbb{R}^{n+1}$ is predicted beforehand using other statistical or machine learning methods. The function $\hat{\psi}_{\tau}(\cdot)$ quantifies risk, which is typically defined as $\hat{\psi}_{\tau}(\wbf_{\tau+1}) := \| \hat{\mathbf{\Sigma}}_{\tau}^{1/2}[\wbf_{\tau+1} - \wbf_b]_{[n]} \|^2 + \kappa(\sum_{i=1}^{n}\hat{\mathbf{\Sigma}}_{\tau,ii}^{1/2}\abs{\wbf_{\tau+1}-\wbf_b}_{(i)})^2$. In this formulation,  $\wbf_b$ denotes the benchmark asset weight and $\hat{\mathbf{\Sigma}}$ is an estimate of the covariance matrix of the stochastic returns, and the term $(\sum_{i=1}^{n}\hat{\mathbf{\Sigma}}_{\tau,ii}^{1/2}\abs{\wbf_{\tau+1}-\wbf_b}_{(i)})^2$ is a measure of covariance forecast error risk and $\kappa$ is a hyper-parameter~\citep{ho2015weighted,li2015sparse}. The term $ \hat{\phi}_\tau^{\text{hold}}$ denotes the holding cost, which may include factors such as borrowing fees. The term \( \hat{\phi}_\tau^{\text{trade}}(\zbf_\tau) \) denotes the transaction cost, which is typically modeled as a linear and quadratic function of the transaction amount $\zbf_{\tau}$, i.e., \( \hat{\phi}_\tau^{\text{trade}}([\zbf_\tau]_{i}) = a_{\tau,i} |[\zbf_{\tau}]_{i}| + b_{\tau, i} |[\zbf_{\tau}]_{(i)}|^2 \) where \( a_{\tau} \) and \( b_{\tau} \) are coefficients \citep{grinold2000active} that can be predicted using other statistical and machine learning methods. The set \( \mathcal{Z}_{\tau} \) contains the constraints for the transaction amounts \( \zbf_{\tau} \), such as limiting the transaction ratio in each period. The set \( \mathcal{W}_{\tau} \) contains the constraints on the asset weights \( \wbf_{\tau+1} \), such as the requirement for market neutrality, \( (\wbf_\tau^m)^{\top}_{[n]} \hat{\mathbf{\Sigma}}_{\tau} (\wbf_{\tau+1})_{[n]} = 0 \)  where $\wbf_{\tau}^{m}$ is the asset value in period $\tau$.

Hence, a specific example of~\eqref{eq:cvx_trading} is 
\begin{equation}\label{pro:MPO_QP}
\begin{aligned}\max_{\substack{\wbf_{\tau+1} \in \mbb R^{n+1}: \\ \tau=0,\ldots,T-1}}\ \ &\sum_{\tau=0}^{T-1}\Bigg[\hat{\rbf}_{\tau+1}^\top \wbf_{\tau+1}
-\gamma^{\text{risk}}_{\tau}\underbrace{\left(\norm{\hat{\mathbf{\Sigma}}_{\tau}^{1/2}[\wbf_{\tau+1}-\wbf_{b}]_{[n]}}^{2}+\kappa\Brbra{\sum_{i=1}^{n}\hat{\mathbf{\Sigma}}_{\tau,ii}^{1/2}\abs{[\wbf_{\tau+1}-\wbf_{b}]_{i}}}^{2}\right)}_{\hat{\psi}^{\text{risk}}_{\tau}(\wbf_{\tau+1})}\\&- \gamma_{\tau}^{\text{trade}} \cdot\underbrace{\left(\sum_{i=1}^{n}a_{\tau,i}\abs{\wbf_{\tau+1}-\wbf_{\tau}}_{i}+b_{\tau,i}(\wbf_{\tau+1}-\wbf_{\tau})_{i}^{2}\right)}_{\hat{\phi}_{\tau}^{\text{trade}}(\wbf_{\tau+1})}\ - \ \gamma_{\tau}^{\text{hold}}\cdot \underbrace{0}_{\hat{\phi}_{\tau}^{\text{hold}}(\wbf_{\tau+1})}\Bigg]
\\ \st \quad \quad &\onebf^{\top}(\wbf_{\tau+1}-\wbf_{\tau})=0\ ,\ \ \wbf_{\tau+1}\geq0\ ,\ \ (\wbf_{\tau}^{m})^{\top}_{[n]}\hat{\mathbf{\Sigma}}_{\tau}(\wbf_{\tau+1})_{[n]}=0\ ,\tau=0,\ldots,T-1\ .
\end{aligned}
\end{equation}
This specific example considers the case with no borrowing fee,  long holding, and no constraint on transactions. In this formulation,  $\gamma^{\text{risk}}_{\tau},\kappa,\gamma_\tau^\text{trade}, a_{\tau,i}, b_{\tau,i} \, \gamma_\tau^\text{hold}$ are hyper-parameters.

The above problem models the risk-control tasks in the portfolio optimization problem as regularization terms added on the objective function, but how to choose the proper hyper-parameters to balance different objectives would be difficult in practice and require heavy parameter tuning. 
Given that, another commonly seen strategy is to model the other objectives other than profit within the constraints. In this way, the hyper-parameters directly correspond to the maximum allowed holding costs, transaction cost, or risk.  
See, for example, \citep{li2000optimal}. Such a constraint-based formulation of \eqref{pro:MPO_QP} is as follows, a SOCP problem:
\begin{equation}\label{pro:MPO_SOCP}
\begin{aligned}\max_{\substack{\ubf_{\tau}\in \mbb R^{n},\wbf_{\tau+1}\in\mbb R^{n+1}:\\
\tau=0,\ldots,T-1
}
} & \sum_{\tau=0}^{T-1}\hat{\rbf}_{\tau+1}^{\top}\wbf_{\tau+1}\\
\st\ \  & \left\{ \begin{aligned} & \onebf^{\top}(\wbf_{\tau+1}-\wbf_{\tau})=0,\wbf_{\tau+1}\ge0,(\wbf_{\tau}^{m})^{\top}_{[n]}\hat{\mathbf{\Sigma}}_{\tau}(\wbf_{\tau+1})_{[n]}=0,\\
 & \ubf_{\tau,i}\geq(\wbf_{\tau+1}-\wbf_{b})_{i},\ubf_{\tau,i}\geq-(\wbf_{\tau+1}-\wbf_{b})_{i},\sum_{i=1}^{n}\brbra{\hat{\mathbf{\Sigma}}_{\tau}^{1/2}}_{ii}\ubf_{\tau,i}\le\gamma_{3\tau}\\
 & \norm{\hat{\mathbf{\Sigma}}_{\tau}^{1/2}(\wbf_{\tau+1}-\wbf_{b})_{[n]}}\le\gamma_{1\tau}\,,\\
 & -\gamma_{2\tau,i}\le(\wbf_{\tau+1}-\wbf_{\tau})_{i}\le\gamma_{2\tau,i},\quad i=1,\ldots,n+1
\end{aligned}
\right.\forall\tau=0,\ldots,T-1.
\end{aligned}
\end{equation}
The above problem \eqref{pro:MPO_SOCP} is the problem that the various solvers directly address in the experiments in Section \ref{sec:mpo}.

\section{Optimality Termination Criteria}\label{app:solver_criteria}
This section describes the termination criteria of the solvers in our experiments. Generally, these criteria comprise three types of errors: primal infeasibility ($\err_\textrm{p}$), dual infeasibility ($\err_\textrm{d}$), and the duality gap ($\err_{\textrm{gap}}$).

ABIP addresses the following primal and dual problems: 
\begin{equation}\label{eq:abip_linear_conic}
        \min_{\xbf} \ \  \cbf^\top \xbf \ \ \st \Abf\xbf = \bbf\ , \ \ \xbf\in \mathcal{K}\ \ ,\ \  
        \max_{\ybf} \ \  \bbf^\top \ybf \ \ \st \Abf^\top \ybf + \sbf = \cbf\ , \ \ \sbf \in \mathcal{K}^*\ .
\end{equation}
If the given optimization problem does not satisfy this formulation, ABIP reformulates the problem to \eqref{eq:abip_linear_conic} first and then solves it. It should be noted that for all iterates of ABIP, $\xbf$ and $\sbf$ always lie in the cones $\mcal K$ and $\mcal K^*$ because ABIP uses an ADMM-based interior-point method, which ensures all iterates are always in the interior of the cones.
On the formulation \eqref{eq:abip_linear_conic}, ABIP determines its termination criteria based on the following three types of errors:
\begin{equation*} 
        \text{err}_{\textrm{p}} := \frac{\norm{\Abf \xbf - \bbf}_{\infty}}{1+\max\left\{\norm{\Abf \xbf}_{\infty},\norm{\bbf}_{\infty}\right\}}, \ 
\text{err}_{\textrm{d}}:= \frac{\norm{\cbf-\Abf^{\top}\ybf-\sbf}_{\infty}}{1+\norm{\cbf}_{\infty}}, \ 
\text{err}_{\textrm{gap}} := \frac{|\cbf^{\top}\xbf-\bbf^{\top}\ybf|}{1+\max\left\{|\cbf^{\top}\xbf|,|\bbf^{\top}\ybf|\right\}}.  
\end{equation*}
In our experiments, we say a solution satisfies the $\vep_{\textrm{rel}}$ tolerance if  $\max\{\err_{\textrm{p}}, \err_{\textrm{d}}, \err_{\textrm{gap}}\} \leq \vep_{\textrm{rel}}$ for this solution.

SCS solves the conic program problems in the following formulation: 
\begin{equation}\label{eq:scs_opt}
         \min_{\xbf}\ \  \cbf^\top \xbf \ \ \st \Abf \xbf + \sbf = \bbf,\ \ \sbf\in \mathcal{K},\,\ \ 
         \max_{\ybf}\ \  -\bbf^\top \ybf \ \ \st \Abf^\top \ybf + \cbf = 0, \ \ \ybf\in \mathcal{K}^*.
\end{equation}
If the problem is not in this formulation, we first reformulate it into this formulation and then use SCS to solve it.
All iterates of  SCS satisfy the conic constraints $s\in \mathcal{K}$ and $y\in \mathcal{K}^*$ as the solver does projections onto the cones in each iteration. 
On the formulation \eqref{eq:scs_opt}, SCS determines its termination criteria based on the following residual thresholds:
\begin{footnotesize}
\begin{equation*}
\begin{aligned}
    &\err_{\textrm{p}} = \norm{\Abf \xbf+\sbf-\bbf}_{\infty}  \leq \vep_{\textrm{abs}} + \vep_{\textrm{rel}}\cdot  \max\left\{\norm{\Abf \xbf}_{\infty},\norm{\sbf}_{\infty},\norm{\bbf}_{\infty}\right\}, \\
    &\err_{\textrm{d}} = \norm{\Abf^\top \ybf + \cbf}_{\infty} \leq \vep_{\textrm{abs}} + \vep_{\textrm{rel}}\cdot \max\left\{\norm{\Abf^\top \ybf}_{\infty},\norm{\cbf}_{\infty}\right\}, \ \\
    & \err_{\textrm{gap}} = |\cbf^\top \xbf - \bbf^\top \ybf| \leq \vep_{\textrm{abs}} + \vep_{\textrm{rel}}\cdot \max\left\{|\cbf^\top \xbf|,|\bbf^\top \ybf|\right\}.
\end{aligned}
\end{equation*}\end{footnotesize}
In our experiments, we say a solution satisfies the $\eps$ tolerance if  the above conditions all hold with $\eps_{\textrm{rel}} = \eps_{\textrm{abs}} = \eps$.
 
MOSEK uses an interior-point method to solve the conic program problem that is similarly structured to~\eqref{eq:abip_linear_conic}. It addresses the self-dual homogeneous model below:
\begin{equation*}
\Abf \xbf - \bbf\tau  =0, \,\, \Abf^{\top}\ybf+\sbf-\cbf\tau =0,\ \ 
-\cbf^{\top}\xbf+\bbf^{\top}\ybf-\kappa  =0, \,\, \xbf\in\mathcal{K},\ \ \sbf\in\mathcal{K}^*, \ \ \tau,\kappa\geq0 \ .
\end{equation*} 
A solution is regarded as satisfying the $\eps_{\mathrm{rel}}$ tolerance in MOSEK if the following conditions hold:
\begin{equation*}
    \begin{aligned}
        \err_{\textrm{p}} &= \frac{\norm{\Abf\frac{\xbf}{\tau}-\bbf}_{\infty}}{1+\norm{\bbf}_{\infty}} \leq \vep_{\textrm{rel}}\  ,\ \ 
        \err_{\textrm{d}} = \frac{\norm{\Abf^\top \frac{\ybf}{\tau} + \frac{\sbf}{\tau} - \cbf}_{\infty}}{1 + \norm{\cbf}_{\infty}} \leq \vep_{\textrm{rel}}, \\
        \err_{\textrm{gap}} & = \max\left\{\frac{\xbf^\top \sbf}{\tau^2}, \left|\frac{\cbf^\top \xbf}{\tau} - \frac{\bbf^\top \ybf}{\tau}\right|\right\}\ \Big/ \ \max\left\{1,\frac{\min\left\{|\cbf^\top \xbf|,|\bbf^\top \ybf|\right\}}{\tau}\right\} \leq \vep_{\textrm{rel}}\ .
    \end{aligned}
\end{equation*}

The problem that CuClarabel directly addresses is similar to that of SCS, i.e.,~\eqref{eq:scs_opt}. However, CuClarabel introduces two slack variables $\tau,\kappa \geq 0$ and considers the following optimization problem:
\begin{equation*}
        \min \ \ \sbf^\top \ybf + \tau \kappa\ \ 
        \st  \ \  \cbf^\top \xbf + \bbf^\top \ybf = -\kappa\ ,\ \Abf^\top \ybf + \cbf\tau = 0 , \Abf \xbf + \sbf - \bbf \tau = 0,\ \ (\sbf,\ybf,\tau,\kappa)\in \mcal K\times \mcal K^*\times \mbb R_+ \times \mbb R_+ .
\end{equation*}
For the above formulation, a solution is regarded as satisfying the $\eps_{\mathrm{rel}}$ tolerance if the following conditions hold:
\begin{equation*}
    \begin{aligned}
        &\err_{\textrm{p}} = \frac{\norm{\Abf\frac{\xbf}{\tau} + \frac{\sbf}{\tau} - \bbf}_{\infty}}{\max\{1,\norm{\bbf}_{\infty}+\norm{\xbf/\tau}_{\infty}+\norm{\sbf/\tau}_{\infty}\}}\leq \eps_{\mathrm{rel}}\ , \\
        &\err_{\textrm{d}} = \frac{\norm{\Abf^\top \frac{\ybf}{\tau} + \cbf}_{\infty}}{\max\{1,\norm{\cbf}_{\infty}+\norm{\xbf/\tau}_{\infty}+\norm{\ybf/\tau}_{\infty}\}}\leq \eps_{\mathrm{rel}}\\
        &\err_{\textrm{gap}} = \frac{\abs{\cbf^\top \xbf + \bbf^\top \ybf}}{\max\{1,\min\{\abs{\cbf^\top \xbf},\abs{\bbf^\top \ybf}\}\}}\leq \eps_{\mathrm{rel}}\ .
    \end{aligned}
\end{equation*}
Since CuClarabel also uses an interior-point method, all iterates of CuClarabel are within the cones.

COPT uses an interior-point method to solve conic programs, but the definition of tolerance in its optimality criteria is not publicly available.

\begin{table}[h]
  \centering
  \caption{SGM(10) Comparison between PDHG and PDCS}
  \label{tab:sgm10_pdhg}
  \small
  \setlength{\tabcolsep}{3pt} 
\renewcommand{\arraystretch}{0.5}{
  \begin{tabular}{l
    rr @{\hspace{10pt}}
    rr @{\hspace{10pt}}
    rr @{\hspace{10pt}}
    rr}
    \toprule
      & \multicolumn{2}{c}{Small SOCP (1641)}
      & \multicolumn{2}{c}{Medium SOCP (220)}
      & \multicolumn{2}{c}{Large SOCP (82)}
      & \multicolumn{2}{c}{\shortstack{Exponential Cone\\Problems (157)}} \\
    \cmidrule(lr){2-3}\cmidrule(lr){4-5}\cmidrule(lr){6-7}\cmidrule(lr){8-9}
      & \multicolumn{1}{c}{PDCS} & \multicolumn{1}{c}{PDHG}
      & \multicolumn{1}{c}{PDCS} & \multicolumn{1}{c}{PDHG}
      & \multicolumn{1}{c}{PDCS} & \multicolumn{1}{c}{PDHG}
      & \multicolumn{1}{c}{PDCS} & \multicolumn{1}{c}{PDHG} \\
    \midrule
    SGM(10) & 2.78 & 16.65 & 160.64 & 1862.50 & 1602.14 & 3600.00 & 18.52 & 675.84 \\
    count   & 1640 & 1552  & 208    & 45      & 53      & 0       & 149   & 88 \\
    \bottomrule
  \end{tabular}
}
\end{table}

\section{Comparison of PDHG and PDCS\label{app:pdhg_pdcs}}
 
In this section, we provide additional details about the comparison between the standard PDHG and PDCS presented in Figure \ref{fig:solved_count_time} in Section \ref{sec:Practical-enhancement-technique}. 

We evaluate PDHG and PDCS, two CPU-based implementations, on the CBLIB dataset, which is divided into small-, medium-, and large-scale subsets for the second-order cone and exponential cone problems. The classification criteria have been described in Section~\ref{sec:numerical}.  For the baseline PDHG used in our comparisons, we select a simple choice where the primal step size and the dual step size are both set to $0.9/\norm{G}_2$, which is similar to the default parameters in the standard PDHG implementation in \cite{adler2017operator, applegate2021practical}. The baseline PDHG does not employ any additional enhancement techniques described in Section~\ref{sec:Practical-enhancement-technique}. 
Figure \ref{fig:solved_count_time} in Section \ref{sec:Practical-enhancement-technique}  shows the experimental results on the medium- and large-scale subset of the second-order cone and exponential cone problems.  The time limit is set to 3600 seconds for both methods.  
\begin{figure}[htbp]
\begin{centering}
\begin{subfigure}[t]{0.47\columnwidth}%
\includegraphics[width=\textwidth]{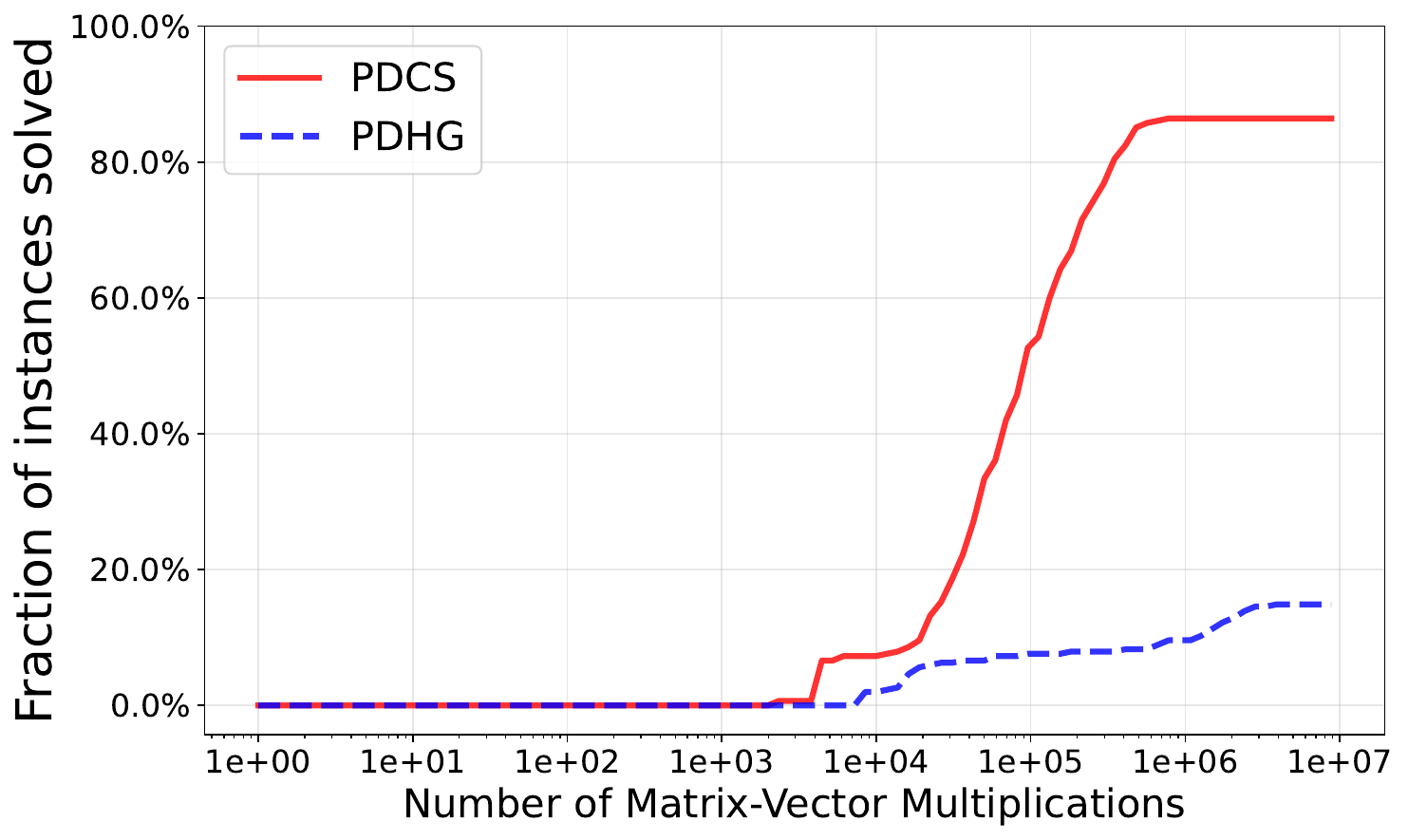}%
\caption{\small \label{fig:solved_iter_soc}Medium and Large Second-order Cone Dataset}
\end{subfigure}\hfill
\begin{subfigure}[t]{0.47\columnwidth}%
\includegraphics[width=\textwidth]{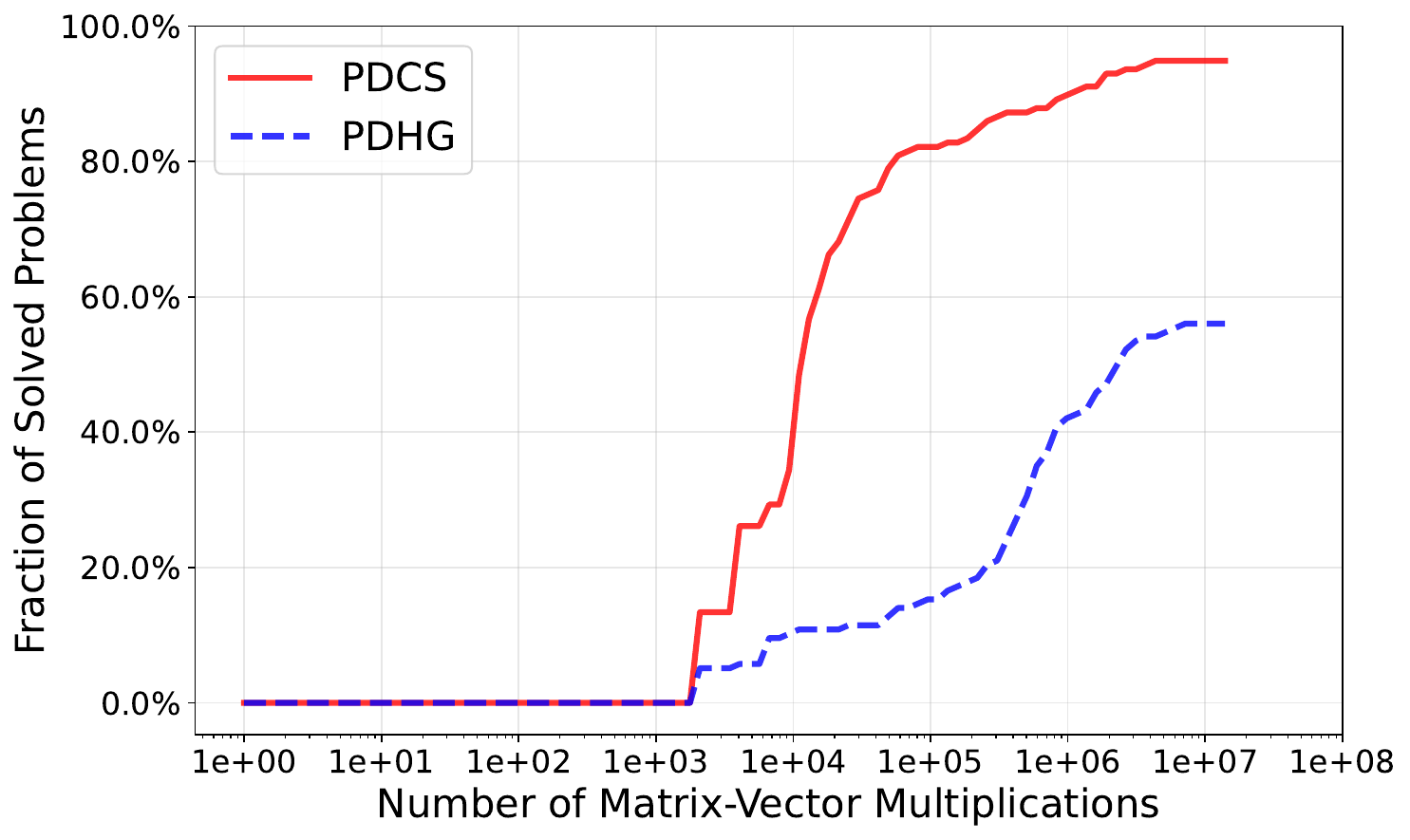}%
\caption{\small \label{fig:solved_iter_exp}Exponential \& Second-order Cone Dataset}
\end{subfigure}
\end{centering}
\caption{\small \label{fig:solved_count_iteration}Performance of PDCS and PDHG in terms of the number of matrix-vector multiplications.}
\end{figure} 
The complete SGM(10) results are shown in Table~\ref{tab:sgm10_pdhg}. On small- and medium-scale SOCP datasets, PDCS achieves a performance advantage of roughly 6-12 times over PDHG. PDCS also shows good scalability, as evidenced by its ability to solve large-scale SOCP problems that remain unsolved by PDHG. For the problems with both exponential and second-order cones, PDCS demonstrates more  improvements, delivering speedups of up to 36 times compared with PDHG. These results demonstrate that the effectiveness of PDCS is largely attributable to the proposed enhancement. Figure~\ref{fig:solved_count_iteration} presents the results, where the y-axis indicates the fraction of solved instances and the x-axis denotes the number of matrix-vector multiplications. Figure~\ref{fig:solved_iter_soc} corresponds to the dataset in which all constraints are second-order cone, while Figure~\ref{fig:solved_iter_exp} corresponds to the dataset with exponential and second-order cone constraints. In both datasets, PDCS significantly outperforms PDHG, solving substantially more instances within the same iteration limit, which demonstrates that the enhancement techniques in Section~\ref{sec:Practical-enhancement-technique} considerably improve the performance of PDCS.

\section{Additional remarks on the experiments in Section \ref{sec:numerical}}\label{subsection:comments}

To provide further insight into the algorithmic behavior of PDCS, we illustrate its convergence on several representative problem instances in Figure~\ref{fig:convergence_performance}. Each plot shows the evolution of the ``KKT error'' across iterations of PDCS. Here, the “KKT error” is defined as the average of the three error metrics introduced in \eqref{eq:PDCS_terminate_tolerance}.

\begin{figure}[htbp]
\begin{centering}
\begin{subfigure}[t]{0.32\columnwidth}%
\includegraphics[width=5.3cm]{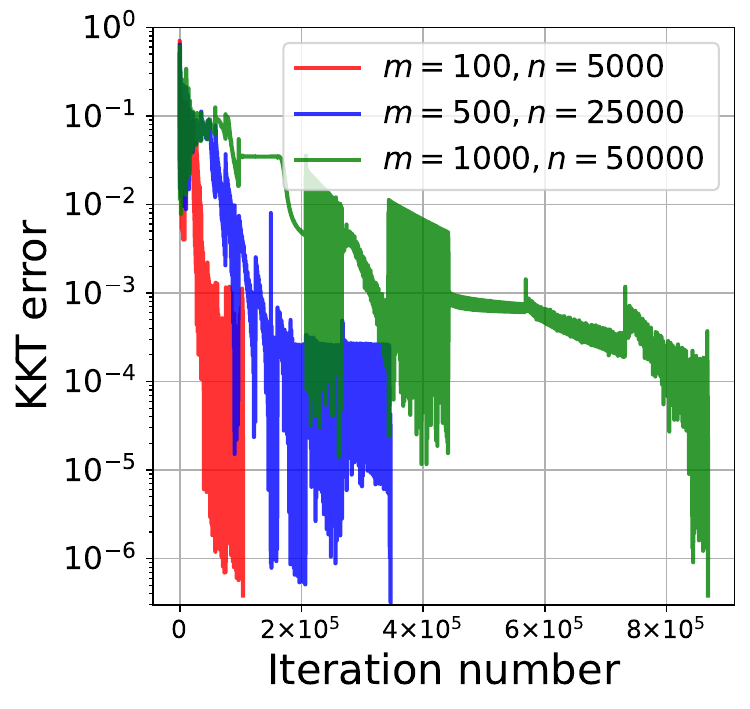}%
\caption{\small Fisher market equilibrium problem (Section~\ref{sec:fisher_market})}
\end{subfigure}\hfill
\begin{subfigure}[t]{0.32\columnwidth}%
\includegraphics[width=5.3cm]{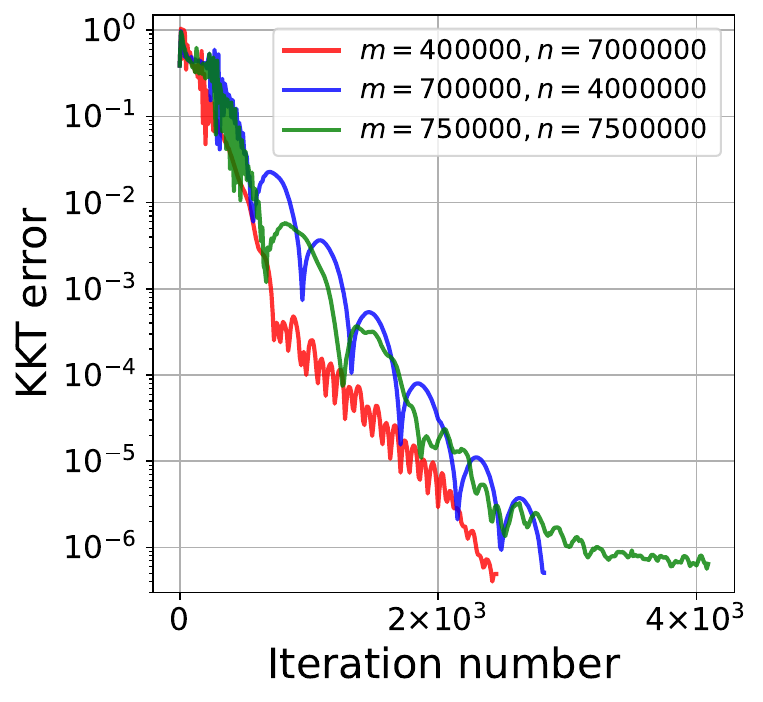}%
\caption{\small LASSO problem (Section~\ref{sec:lasso})}
\end{subfigure}\hfill
\begin{subfigure}[t]{0.32\columnwidth}%
\includegraphics[width=5.3cm]{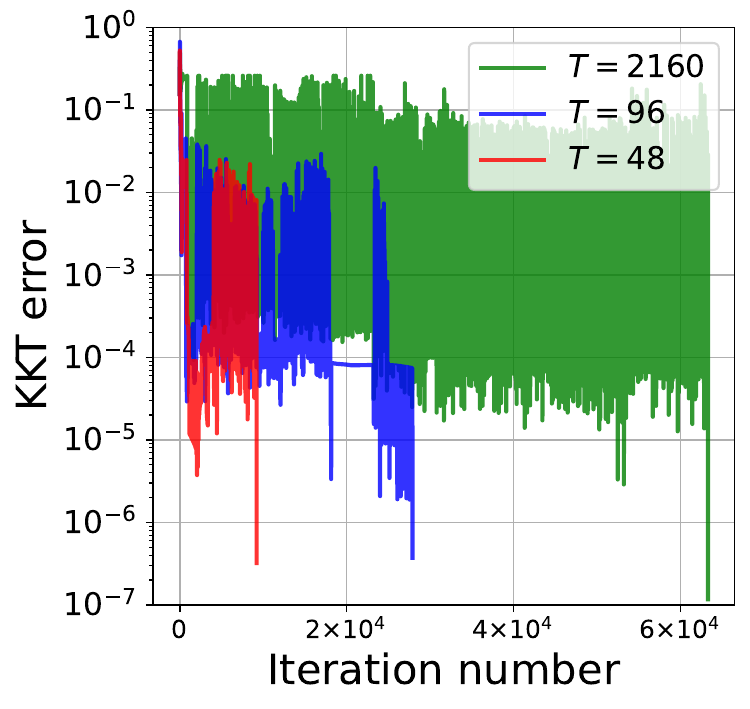}%
\caption{\small MPO problem (Section~\ref{sec:mpo})}
\end{subfigure}\hfill
\end{centering}
\caption{\small \label{fig:convergence_performance}Convergence behavior of PDCS.}
\end{figure}

Figures \ref{fig:convergence_performance}a and \ref{fig:convergence_performance}c illustrate the convergence behavior for three Fisher market equilibrium problems and three MPO problems, respectively. In both cases, we observe an initial phase of rapid progress followed by a plateau with slower KKT error reduction. This trend helps explain why PDCS is particularly effective in computing low-accuracy solutions. Interestingly, during the final iterations, PDCS often exhibits a phase of accelerated convergence, which differs from the typical sublinear behavior expected from first-order methods. Figure~\ref{fig:convergence_performance}b shows the convergence of PDCS on three Lasso problem instances. In this case, the algorithm exhibits nearly linear convergence, enabling it to achieve high-accuracy solutions in a relatively short time. This observation is consistent with the results reported in Section~\ref{sec:lasso} and suggests that the Lasso problem is particularly well-suited for PDCS. It would be interesting to explore what properties of conic problems (such as Lasso problems) could make them particularly suitable for PDCS.


\end{document}